\newtheorem{theorem}{Theorem}
{}
\newtheorem{corollary}{Corollary}
{}
\newtheorem{definition}{Definition}
{}
\newtheorem{remark}{Remark}
{}
\theoremstyle{plain}
{}
\newtheorem{proposition}{Proposition}
{}
\begin{document}
\begin{center}
{\Large \bf{Generalized Bicomplex Numbers and Lie Groups \ }}
\end{center}
\centerline{\large S\i dd\i ka \"{O}zkald\i \ Karaku\c{s} $^{1}$, Ferdag KAHRAMAN AKSOYAK $^{2}${\footnotetext{
{ E-mail: siddika.karakus@bilecik.edu.tr (S. \"{O}zkald\i \ Karaku\c{s}) $^{1}$; $^{2}$ferda@erciyes.edu.tr(F. Kahraman Aksoyak )}} }}

\
\centerline{\it $^{1}$Bilecik \c{S}eyh Edebali University, Department of Mathematics,
Bilecik, Turkey}

\centerline{\it $^{2}$Erciyes University, Department of Mathematics,
Kayseri, Turkey}

\begin{abstract}
In this paper, we define the generalized bicomplex numbers and give some
algebraic properties of them. Also, we show that some hyperquadrics in $%
\mathbb{R}^{4}$ and $\mathbb{R}_{2}^{4}$ are Lie groups by using generalized
bicomplex number product and obtain Lie algebras of these Lie groups.
Morever, by using tensor product surfaces, we determine some special Lie
subgroups of these hyperquadrics.\
\end{abstract}

\begin{quote}\small
{\it{Key words and Phrases}: Lie group, bicomplex number, surfaces in Euclidean space,
surfaces in pseudo-Euclidean space.}
\end{quote}
\begin{quote}\small
2010 \textit{Mathematics Subject Classification}: 30G35, 43A80, 53C40, 53C50  .
\end{quote}

\section{Introduction}

In mathematics, a Lie group is a group which is also a differentiable
manifold with the property that the group operations are differentiable. To
establish group structure on the surface is quite difficult. Even if the
spheres that admit the structure of a Lie group are only the 0-sphere $S^{0}$
(real numbers with absolute value $1$), the circle $S^{1}$ (complex numbers
with absolute value $1$), the 3-sphere $S^{3}$ (the set of quaternions of
unit form) and $S^{7}.$ A manifold $M$ carrying n linearly independent
non-vanishing vector fields is called parallelisable and a Lie group is
parallelisable. For even $n>1$ $S^{n}$ is not a Lie group because it can not
be parallelisable as a differentiable manifold. Thus $S^{n}$ is
parallelisable if and only $n=0,1,3,7$.

\"{O}zkald\i \ and Yayl\i \  \cite{kar} showed that a hyperquadric $P$ in $%
\mathbb{R}^{4}$ is a Lie group by using bicomplex number product. They
determined some special subgroups of this Lie group $P,$ by using the tensor
product surfaces of Euclidean planar curves. Karaku\c{s} \"{O}. and Yayl\i \
\cite{kara} showed that a hyperquadric $Q$ in $\mathbb{R}_{2}^{4}$ is a Lie
group by using bicomplex number product. They changed the rule of tensor
product and they gave a new tensor product rule in $\mathbb{R}_{2}^{4}.$ By
means of the tensor product surfaces of a Lorentzian plane curve and a
Euclidean plane curve, they determined some special subgroups of this Lie
group $Q.$ In \cite{ak1} and \cite{ak2},by using curves and surfaces which
are obtained by homothetic motion, were obtained some special subgroups of
these Lie groups $P$ and $Q,$ respectively.

In this paper, we define the generalized bicomplex numbers and give some
algebraic properties of them. Also, we show that some hyperquadrics in $%
\mathbb{R}^{4}$ and $\mathbb{R}_{2}^{4}$ are Lie groups by using generalized
bicomplex number product and obtain Lie algebras of these Lie groups.
Morever, by means of tensor product surfaces, we determine some special Lie
subgroups of these hyperquadrics and obtain left invariant vector fields of
these tensor product surfaces which are Lie groups.

\section{Preliminaries}

Bicomplex number is defined by the basis $\left \{ 1,i,j,ij\right \} $ where
$i,j,ij$ satisfy $i^{2}=-1,$ $j^{2}=-1,$ $ij=ji.$ Thus any bicomplex number $%
x $ can be expressed as $x=x_{1}1+x_{2}i+x_{3}j+x_{4}ij$, $\forall
x_{1},x_{2},x_{3},x_{4}\in \mathbb{R}.$ We denote the set of bicomplex
numbers by $C_{2}.$ For any $x=x_{1}1+x_{2}i+x_{3}j+x_{4}ij$ and $%
y=y_{1}1+y_{2}i+y_{3}j+y_{4}ij$ in $C_{2}$ the bicomplex number addition is
defined as

\begin{equation*}
x+y=\left( x_{1}+y_{1}\right) +\left( x_{2}+y_{2}\right) i+\left(
x_{3}+y_{3}\right) j+\left( x_{4}+y_{4}\right) ij\text{.}
\end{equation*}%
The multiplication of a bicomplex number $x=x_{1}1+x_{2}i+x_{3}j+x_{4}ij$ by
a real scalar $\lambda $ is defined as
\begin{equation*}
\lambda x=\lambda x_{1}1+\lambda x_{2}i+\lambda x_{3}j+\lambda x_{4}ij\text{.%
}
\end{equation*}%
With this addition and scalar multiplication, $C_{2}$ is a real vector space.

Bicomplex number product, denoted by $\times $, over the set of bicomplex
numbers $C_{2}$\ is given by
\begin{eqnarray*}
x\times y &=&\left( x_{1}y_{1}-x_{2}y_{2}-x_{3}y_{3}+x_{4}y_{4}\right)
+\left( x_{1}y_{2}+x_{2}y_{1}-x_{3}y_{4}-x_{4}y_{3}\right) i \\
&&+\left( x_{1}y_{3}+x_{3}y_{1}-x_{2}y_{4}-x_{4}y_{2}\right) j+\left(
x_{1}y_{4}+x_{4}y_{1}+x_{2}y_{3}+x_{3}y_{2}\right) ij\text{.}
\end{eqnarray*}%
Vector space $C_{2}$ together with the bicomplex number product $\times $ is
a real algebra.

Since the bicomplex algebra is associative, it can be considered in terms of
matrices. Consider the set of matrices%
\begin{equation*}
Q=\left \{ \left(
\begin{array}{cccc}
x_{1} & -x_{2} & -x_{3} & x_{4} \\
x_{2} & x_{1} & -x_{4} & -x_{3} \\
x_{3} & -x_{4} & x_{1} & -x_{2} \\
x_{4} & x_{3} & x_{2} & x_{1}%
\end{array}%
\right) ;\text{ \  \  \  \  \  \ }x_{i}\in \mathbb{R}\text{ ,\  \  \  \ }1\leq i\leq
4\right \} \text{.}
\end{equation*}%
The set $Q$ together with matrix addition and scalar matrix multiplication
is a real vector space. Furthermore, the vector space together with matrix
product is an algebra.

The transformation
\begin{equation*}
g:C_{2}\rightarrow Q
\end{equation*}%
given by
\begin{equation*}
g\left( x=x_{1}1+x_{2}i+x_{3}j+x_{4}ij\right) =\left(
\begin{array}{cccc}
x_{1} & -x_{2} & -x_{3} & x_{4} \\
x_{2} & x_{1} & -x_{4} & -x_{3} \\
x_{3} & -x_{4} & x_{1} & -x_{2} \\
x_{4} & x_{3} & x_{2} & x_{1}%
\end{array}%
\right)
\end{equation*}%
is one to one and onto. Morever $\forall x,y\in C_{2}$ and $\lambda \in
\mathbb{R},$ we have
\begin{eqnarray*}
g\left( x+y\right) &=&g\left( x\right) +g\left( y\right) \\
g\left( \lambda x\right) &=&\lambda g\left( x\right) \\
g\left( xy\right) &=&g\left( x\right) g\left( y\right) \text{.}
\end{eqnarray*}%
Thus the algebras $C_{2}$ and $Q$ are isomorphic.

Let $x\in C_{2}.$ Then $x$ can be expressed as $x=\left( x_{1}+x_{2}i\right)
+\left( x_{3}+x_{4}i\right) j.$ In that case, there is three different
conjugations for bicomplex numbers as follows:%
\begin{eqnarray*}
x^{t_{i}} &=&\left[ \left( x_{1}+x_{2}i\right) +\left( x_{3}+x_{4}i\right) j%
\right] ^{t_{i}}=\left( x_{1}-x_{2}i\right) +\left( x_{3}-x_{4}i\right) j \\
x^{t_{j}} &=&\left[ \left( x_{1}+x_{2}i\right) +\left( x_{3}+x_{4}i\right) j%
\right] ^{t_{j}}=\left( x_{1}+x_{2}i\right) -\left( x_{3}+x_{4}i\right) j \\
x^{t_{ij}} &=&\left[ \left( x_{1}+x_{2}i\right) +\left( x_{3}+x_{4}i\right) j%
\right] ^{t_{ij}}=\left( x_{1}-x_{2}i\right) -\left( x_{3}-x_{4}i\right) j%
\text{.}
\end{eqnarray*}%
And we can write
\begin{eqnarray*}
x\times x^{t_{i}} &=&\left( x_{1}^{2}+x_{2}^{2}-x_{3}^{2}-x_{4}^{2}\right)
+2\left( x_{1}x_{3}+x_{2}x_{4}\right) j \\
x\times x^{t_{j}} &=&\left( x_{1}^{2}-x_{2}^{2}+x_{3}^{2}-x_{4}^{2}\right)
+2\left( x_{1}x_{2}+x_{3}x_{4}\right) i \\
x\times x^{t_{ij}} &=&\left( x_{1}^{2}+x_{2}^{2}+x_{3}^{2}+x_{4}^{2}\right)
+2\left( x_{1}x_{4}-x_{2}x_{3}\right) ij\text{.}
\end{eqnarray*}

\section{Generalized Bicomplex Numbers}

In this section we define generalized bicomplex number and give some
algebraic properties of them.

\begin{definition}
A generalized bicomplex number $x$ is defined by the basis $\left \{
1,i,j,ij\right \} $ as follows%
\begin{equation*}
x=x_{1}1+x_{2}i+x_{3}j+x_{4}ij,
\end{equation*}%
where $x_{1},$ $x_{2},$ $x_{3}$ and $x_{4}$ are real numbers and $%
i^{2}=-\alpha ,$ $j^{2}=-\beta ,$ $\left( ij\right) ^{2}=\alpha \beta ,$ $%
ij=ji$, $\alpha ,\beta \in \mathbb{R}.$
\end{definition}

\begin{definition}
We denote the set of generalized bicomplex numbers by $C_{\alpha \beta }.$
For any $x=x_{1}1+x_{2}i+x_{3}j+x_{4}ij$ and $y=y_{1}1+y_{2}i+y_{3}j+y_{4}ij$
in $C_{\alpha \beta },$ the generalized bicomplex number addition is defined
as
\begin{equation*}
x+y=\left( x_{1}+y_{1}\right) +\left( x_{2}+y_{2}\right) i+\left(
x_{3}+y_{3}\right) j+\left( x_{4}+y_{4}\right) ij
\end{equation*}%
and the multiplication of a generalized bicomplex number $%
x=x_{1}1+x_{2}i+x_{3}j+x_{4}ij$ by a real scalar $\lambda $ is defined as%
\begin{equation*}
\lambda x=\lambda x_{1}1+\lambda x_{2}i+\lambda x_{3}j+\lambda x_{4}ij
\end{equation*}
\end{definition}

\begin{corollary}
The set of generalized bicomplex numbers $C_{\alpha \beta }$ is a real
vector space with this addition and scalar multiplication operations.
\end{corollary}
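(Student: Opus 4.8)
The plan is to observe that, as far as addition and scalar multiplication are concerned, the parameters $\alpha$ and $\beta$ play no role whatsoever: they enter only through the products of the basis elements $i,j,ij$, which appear in neither operation of Definition 2. Consequently the underlying additive and scalar structure of $C_{\alpha \beta }$ is literally that of $\mathbb{R}^{4}$ expressed in the basis $\left\{ 1,i,j,ij\right\}$, and the claim reduces to checking the eight vector space axioms, each of which descends componentwise from the field axioms of $\mathbb{R}$.

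Concretely, first I would introduce the coordinate bijection $\varphi :C_{\alpha \beta }\rightarrow \mathbb{R}^{4}$ defined by $\varphi \left( x_{1}1+x_{2}i+x_{3}j+x_{4}ij\right) =\left( x_{1},x_{2},x_{3},x_{4}\right)$, and note that by Definition 2 both operations are defined entry by entry. Thus $\varphi \left( x+y\right) =\varphi \left( x\right) +\varphi \left( y\right)$ and $\varphi \left( \lambda x\right) =\lambda \varphi \left( x\right)$, where the right-hand sides are the usual operations on $\mathbb{R}^{4}$. This reduces every axiom to its analogue on $\mathbb{R}^{4}$.

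Then I would run through the axioms in order: commutativity and associativity of $+$; the existence of the additive identity $0=0\cdot 1+0\cdot i+0\cdot j+0\cdot ij$ and of the additive inverse $-x=\left( -x_{1}\right) 1+\left( -x_{2}\right) i+\left( -x_{3}\right) j+\left( -x_{4}\right) ij$; closure under scalar multiplication; the two distributive laws $\lambda \left( x+y\right) =\lambda x+\lambda y$ and $\left( \lambda +\mu \right) x=\lambda x+\mu x$; the compatibility $\lambda \left( \mu x\right) =\left( \lambda \mu \right) x$; and the unit law $1\cdot x=x$. In each case the verification is a one-line componentwise computation that appeals to the corresponding property of the real numbers, so no genuine difficulty arises.

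The only point worth flagging is not an obstacle but a remark: the vector space structure is completely independent of $\alpha $ and $\beta $, so this Corollary is immediate once one recognizes $C_{\alpha \beta }$ as $\mathbb{R}^{4}$ in disguise. The parameters $\alpha $ and $\beta $ will become relevant only later, when the multiplicative and conjugation structure, and hence the algebra and Lie group structure, are analyzed.
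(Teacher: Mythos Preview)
Your proposal is correct. The paper itself provides no proof of this corollary at all; it is stated immediately after Definition~2 as an evident consequence of the componentwise nature of the operations, which is precisely the observation your argument makes explicit. Your identification of $C_{\alpha\beta}$ with $\mathbb{R}^{4}$ via the coordinate map and subsequent axiom check is the natural way to fill in what the authors leave implicit, and your remark that $\alpha,\beta$ are irrelevant at this stage is exactly the point.
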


\begin{definition}
Generalized bicomplex number product, denoted by $\cdot $, over the set of
generalized bicomplex numbers $C_{\alpha \beta }$\ is given by
\begin{eqnarray*}
x\cdot y &=&\left( x_{1}y_{1}-\alpha x_{2}y_{2}-\beta x_{3}y_{3}+\alpha
\beta x_{4}y_{4}\right) +\left( x_{1}y_{2}+x_{2}y_{1}-\beta x_{3}y_{4}-\beta
x_{4}y_{3}\right) i \\
&&+\left( x_{1}y_{3}+x_{3}y_{1}-\alpha x_{2}y_{4}-\alpha x_{4}y_{2}\right)
j+\left( x_{1}y_{4}+x_{4}y_{1}+x_{2}y_{3}+x_{3}y_{2}\right) ij\text{.}
\end{eqnarray*}
\end{definition}

\begin{theorem}
Vector space $C_{\alpha \beta }$ together with the generalized bicomplex
product $\cdot $\ is a real algebra.
\end{theorem}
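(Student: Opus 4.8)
The plan is to use the Corollary above, which already records that $\left( C_{\alpha \beta },+\right) $ with the stated scalar multiplication is a real vector space. To conclude that $C_{\alpha \beta }$ is a real algebra it then remains only to check that the product $\cdot $ is bilinear, i.e.\ that for all $x,y,z\in C_{\alpha \beta }$ and every $\lambda \in \mathbb{R}$ we have $x\cdot \left( y+z\right) =x\cdot y+x\cdot z$, $\left( x+y\right) \cdot z=x\cdot z+y\cdot z$, and $\lambda \left( x\cdot y\right) =\left( \lambda x\right) \cdot y=x\cdot \left( \lambda y\right) $.

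First I would verify these identities directly from the defining formula for $x\cdot y$. Fixing $x=x_{1}1+x_{2}i+x_{3}j+x_{4}ij$ and writing $y+z$ coordinatewise, one expands each of the four coordinates of $x\cdot \left( y+z\right) $. Since in the product formula every coordinate of $x\cdot y$ is a sum of terms each of which is \emph{linear} in the coordinates $y_{1},y_{2},y_{3},y_{4}$ (the factors $-\alpha $, $-\beta $, $\alpha \beta $ being real constants), coordinatewise additivity $x\cdot \left( y+z\right) =x\cdot y+x\cdot z$ is immediate; right distributivity and compatibility with the scalar $\lambda $ follow in the same way. This already settles the theorem, the only care required being correct bookkeeping of the constants $\alpha ,\beta $.

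A cleaner and more conceptual route, mirroring the matrix treatment of $C_{2}$ in the Preliminaries, is to introduce the map $g_{\alpha \beta }:C_{\alpha \beta }\rightarrow M_{4}\left( \mathbb{R}\right) $ sending $x=x_{1}1+x_{2}i+x_{3}j+x_{4}ij$ to the matrix of the left-multiplication operator $y\mapsto x\cdot y$ in the basis $\left \{ 1,i,j,ij\right \} $, namely
\begin{equation*}
g_{\alpha \beta }\left( x\right) =\left(
\begin{array}{cccc}
x_{1} & -\alpha x_{2} & -\beta x_{3} & \alpha \beta x_{4} \\
x_{2} & x_{1} & -\beta x_{4} & -\beta x_{3} \\
x_{3} & -\alpha x_{4} & x_{1} & -\alpha x_{2} \\
x_{4} & x_{3} & x_{2} & x_{1}
\end{array}
\right) ,
\end{equation*}
which reduces to the matrix $g\left( x\right) $ of the Preliminaries when $\alpha =\beta =1$. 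One sees at once that $g_{\alpha \beta }$ is linear and injective, hence a vector-space isomorphism onto its image. The decisive identity to establish is the multiplicativity $g_{\alpha \beta }\left( x\cdot y\right) =g_{\alpha \beta }\left( x\right) g_{\alpha \beta }\left( y\right) $, which one checks entrywise by comparing the product formula for $x\cdot y$ with the entries of the matrix product on the right. Granting this, $g_{\alpha \beta }$ becomes an algebra homomorphism onto a subspace of $M_{4}\left( \mathbb{R}\right) $ closed under matrix multiplication; since $M_{4}\left( \mathbb{R}\right) $ is an associative real algebra, bilinearity—and, as a bonus, associativity of $\cdot $, this last being equivalent to the matrix identity above—transfer back to $C_{\alpha \beta }$, and the theorem follows.

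I do not expect a genuine obstacle here: the argument is entirely routine, and the only real risk is a sign or coefficient slip. The most laborious step will be the entrywise verification of $g_{\alpha \beta }\left( x\cdot y\right) =g_{\alpha \beta }\left( x\right) g_{\alpha \beta }\left( y\right) $ in the matrix approach, or equivalently the direct coordinate expansion of the relevant triple products; both are mechanical once the constants $\alpha $ and $\beta $ are tracked with care.
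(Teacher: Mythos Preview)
Your proposal is correct and follows essentially the same route as the paper: the paper's proof simply lists the algebra axioms (distributivity, associativity, scalar compatibility) and asserts they hold, which is exactly your direct-verification approach, only the paper includes associativity among the checked properties whereas you recover it only via your second, matrix-based route. That matrix alternative is also the content of the paper's next theorem (the isomorphism $C_{\alpha\beta}\cong Q_{\alpha\beta}$), so you are effectively anticipating and merging Theorems~1 and~2.
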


\begin{proof}
$\cdot :C_{\alpha \beta }\times C_{\alpha \beta }\rightarrow C_{\alpha \beta
}$ $\forall p,q,r\in C_{\alpha \beta }$ and $\lambda \in \mathbb{R}$ satisfy
the following conditions

i) $p\cdot (q+r)=p\cdot q+p\cdot r$

ii) $p\cdot (q\cdot r)=(p\cdot q)\times r$

iii) $(\lambda p)\cdot q=p\cdot \left( \lambda q\right) =\lambda \left(
p\cdot q\right) $

So, the real vector space $C_{\alpha \beta }$\ is a real algebra with
generalized bicomplex number product.
\end{proof}

Since the generalized bicomplex algebra is associative, it can be considered
in terms of matrices. Consider the set of matrices%
\begin{equation*}
Q_{\alpha \beta }=\left \{ \left(
\begin{array}{cccc}
x_{1} & -\alpha x_{2} & -\beta x_{3} & \alpha \beta x_{4} \\
x_{2} & x_{1} & -\beta x_{4} & -\beta x_{3} \\
x_{3} & -\alpha x_{4} & x_{1} & -\alpha x_{2} \\
x_{4} & x_{3} & x_{2} & x_{1}%
\end{array}%
\right) ;\text{ \  \  \  \  \  \ }x_{i}\in \mathbb{R}\text{ ,\  \  \  \ }1\leq i\leq
4\right \} \text{.}
\end{equation*}%
The set $Q_{\alpha \beta }$ together with matrix addition and scalar matrix
multiplication is a real vector space. Furthermore, the vector space
together with matrix product is an algebra.

\begin{theorem}
The algebras $C_{\alpha \beta }$ and $Q_{\alpha \beta }$ are isomorphic.
\end{theorem}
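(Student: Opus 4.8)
The plan is to construct an explicit map $g_{\alpha\beta}:C_{\alpha\beta}\to Q_{\alpha\beta}$ mirroring the isomorphism $g$ used for ordinary bicomplex numbers, and then verify it is a vector space isomorphism that additionally respects products. Concretely, I would define
\begin{equation*}
g_{\alpha\beta}\left(x_{1}1+x_{2}i+x_{3}j+x_{4}ij\right)=\left(
\begin{array}{cccc}
x_{1} & -\alpha x_{2} & -\beta x_{3} & \alpha\beta x_{4} \\
x_{2} & x_{1} & -\beta x_{4} & -\beta x_{3} \\
x_{3} & -\alpha x_{4} & x_{1} & -\alpha x_{2} \\
x_{4} & x_{3} & x_{2} & x_{1}
\end{array}
\right),
\end{equation*}
which is exactly the matrix appearing in the definition of $Q_{\alpha\beta}$, so that $g_{\alpha\beta}$ is by construction a bijection onto $Q_{\alpha\beta}$: the four real parameters $x_{1},\dots,x_{4}$ are read off uniquely from any such matrix, giving a two-sided inverse.

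The next step is to check that $g_{\alpha\beta}$ is linear, i.e. $g_{\alpha\beta}(x+y)=g_{\alpha\beta}(x)+g_{\alpha\beta}(y)$ and $g_{\alpha\beta}(\lambda x)=\lambda\, g_{\alpha\beta}(x)$ for all $x,y\in C_{\alpha\beta}$ and $\lambda\in\mathbb{R}$. These are immediate, since every entry of the matrix is an $\mathbb{R}$-linear function of $(x_{1},x_{2},x_{3},x_{4})$, so both additivity and homogeneity follow entrywise from the definitions of generalized bicomplex addition and scalar multiplication in Definition 2.

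The essential step — and the one I expect to be the main obstacle — is the multiplicativity relation $g_{\alpha\beta}(x\cdot y)=g_{\alpha\beta}(x)\,g_{\alpha\beta}(y)$. The left-hand side is obtained by reading off the four components of the product $x\cdot y$ from Definition 4 and plugging them into the matrix template. The right-hand side is the ordinary matrix product of $g_{\alpha\beta}(x)$ and $g_{\alpha\beta}(y)$. I would verify the identity by comparing these two $4\times 4$ matrices entry by entry; the $(1,1)$ entry, for instance, should reproduce $x_{1}y_{1}-\alpha x_{2}y_{2}-\beta x_{3}y_{3}+\alpha\beta x_{4}y_{4}$, and so on through all sixteen positions. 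This is a routine but lengthy computation whose bookkeeping is the real work: one must track the factors of $\alpha$ and $\beta$ carefully, since the placement of these coefficients is precisely what distinguishes $Q_{\alpha\beta}$ from the classical case $\alpha=\beta=1$. The associativity of the generalized bicomplex product (guaranteed by the preceding theorem) ensures this matrix representation is internally consistent.

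Once these three properties are established, $g_{\alpha\beta}$ is a bijective $\mathbb{R}$-linear map preserving products, hence an algebra isomorphism, and therefore $C_{\alpha\beta}\cong Q_{\alpha\beta}$, completing the proof.
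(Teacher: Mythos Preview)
Your proposal is correct and follows essentially the same approach as the paper: define the explicit matrix map (the paper calls it $h$), observe that it is a bijection onto $Q_{\alpha\beta}$, and check that it is $\mathbb{R}$-linear and multiplicative. The paper's proof simply asserts the three identities $h(x+y)=h(x)+h(y)$, $h(\lambda x)=\lambda h(x)$, $h(xy)=h(x)h(y)$ without writing out the entrywise verification, so your outline is in fact more detailed than the original.
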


\begin{proof}
The transformation
\begin{equation*}
h:C_{\alpha \beta }\rightarrow Q_{\alpha \beta }
\end{equation*}%
given by
\begin{equation*}
h\left( x=x_{1}1+x_{2}i+x_{3}j+x_{4}ij\right) =\left(
\begin{array}{cccc}
x_{1} & -\alpha x_{2} & -\beta x_{3} & \alpha \beta x_{4} \\
x_{2} & x_{1} & -\beta x_{4} & -\beta x_{3} \\
x_{3} & -\alpha x_{4} & x_{1} & -\alpha x_{2} \\
x_{4} & x_{3} & x_{2} & x_{1}%
\end{array}%
\right)
\end{equation*}%
is one to one and onto. Morever $\forall x,y\in C_{\alpha \beta }$ and $%
\lambda \in \mathbb{R},$ we have
\begin{eqnarray*}
h\left( x+y\right) &=&h\left( x\right) +h\left( y\right) \\
h\left( \lambda x\right) &=&\lambda h\left( x\right) \\
h\left( xy\right) &=&h\left( x\right) h\left( y\right) \text{.}
\end{eqnarray*}%
Thus the algebras $C_{\alpha \beta }$and $Q_{\alpha \beta }$ are isomorphic.
\end{proof}

\begin{definition}
Let $x\in C_{\alpha \beta }.$ Then $x$ can be expressed as $x=\left(
x_{1}+x_{2}i\right) +\left( x_{3}+x_{4}i\right) j$. Conjugations of
generalized bicomplex numbers with respect to $i,j,ij$ are given by%
\begin{eqnarray*}
x^{t_{i}} &=&\left[ \left( x_{1}+x_{2}i\right) +\left( x_{3}+x_{4}i\right) j%
\right] ^{t_{i}}=\left( x_{1}-x_{2}i\right) +\left( x_{3}-x_{4}i\right) j \\
x^{t_{j}} &=&\left[ \left( x_{1}+x_{2}i\right) +\left( x_{3}+x_{4}i\right) j%
\right] ^{t_{j}}=\left( x_{1}+x_{2}i\right) -\left( x_{3}+x_{4}i\right) j \\
x^{t_{ij}} &=&\left[ \left( x_{1}+x_{2}i\right) +\left( x_{3}+x_{4}i\right) j%
\right] ^{t_{ij}}=\left( x_{1}-x_{2}i\right) -\left( x_{3}-x_{4}i\right) j%
\text{.}
\end{eqnarray*}%
where $x^{t_{i}},x^{t_{j}}$ and $x^{t_{ij}}$ denote conjugations of $x$ with
respect to $i,j,ij,$ respectively. Also we can compute
\begin{eqnarray*}
x\cdot x^{t_{i}} &=&\left( x_{1}^{2}+\alpha x_{2}^{2}-\beta x_{3}^{2}-\alpha
\beta x_{4}^{2}\right) +2\left( x_{1}x_{3}+\alpha x_{2}x_{4}\right) j \\
x\cdot x^{t_{j}} &=&\left( x_{1}^{2}-\alpha x_{2}^{2}+\beta x_{3}^{2}-\alpha
\beta x_{4}^{2}\right) +2\left( x_{1}x_{2}+\beta x_{3}x_{4}\right) i \\
x\cdot x^{t_{ij}} &=&\left( x_{1}^{2}+\alpha x_{2}^{2}+\beta
x_{3}^{2}+\alpha \beta x_{4}^{2}\right) +2\left(
x_{1}x_{4}-x_{2}x_{3}\right) ij\text{.}
\end{eqnarray*}
\end{definition}

\begin{proposition}
Conjugations of generalized bicomplex numbers with respect to $i,j,ij$ have
following properties

i) $\left( \lambda p+\delta q\right) ^{t_{k}}=\lambda p^{t_{k}}+\delta
q^{t_{k}}$

ii) $\left( p^{t_{k}}\right) ^{t_{k}}=p$

iii) $\left( p\cdot q\right) ^{t_{k}}=p^{t_{k}}\cdot q^{t_{k}},$

where $p,q\in C_{\alpha \beta },$ $\lambda ,\delta \in \mathbb{R}$ and $%
t_{k} $ represent the conjugations with respect to $i,j$ and $ij.$
\end{proposition}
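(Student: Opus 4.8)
The plan is to treat the three conjugations uniformly as $\mathbb{R}$-linear sign changes on the coordinates, and then reduce the multiplicative identity (iii) to a finite check on basis elements. First I would record the explicit effect of each conjugation on the components of $x=x_1+x_2 i+x_3 j+x_4 ij$: reading off the definition, $t_i$ negates $x_2$ and $x_4$, the map $t_j$ negates $x_3$ and $x_4$, and $t_{ij}$ negates $x_2$ and $x_3$, while each fixes the remaining two coordinates (including the real part). From this description part (i) is immediate, since negating a fixed subset of coordinates is an $\mathbb{R}$-linear operation and the addition and scalar multiplication of $C_{\alpha \beta}$ are componentwise. Part (ii) is equally immediate: applying the same sign change twice restores every coordinate, so $\left(p^{t_k}\right)^{t_k}=p$.

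The substance lies in (iii). The key observation I would use is that each conjugation is the linear extension of a sign substitution on the generators that preserves the defining relations: $t_i$ sends $i\mapsto -i,\ j\mapsto j$ (hence $ij\mapsto -ij$), $t_j$ sends $i\mapsto i,\ j\mapsto -j$ (hence $ij\mapsto -ij$), and $t_{ij}$ sends $i\mapsto -i,\ j\mapsto -j$ (hence $ij\mapsto ij$). Each of these substitutions respects $i^{2}=-\alpha$, $j^{2}=-\beta$, $(ij)^{2}=\alpha\beta$ and $ij=ji$, so it extends to an algebra endomorphism of $C_{\alpha \beta}$. Since the product $\cdot$ is commutative (the product formula in Definition is symmetric under interchanging $x$ and $y$), this endomorphism is an honest homomorphism rather than an anti-homomorphism, which is exactly the assertion of (iii).

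To make this rigorous without invoking a universal property, I would exploit bilinearity. Both maps $(p,q)\mapsto (p\cdot q)^{t_k}$ and $(p,q)\mapsto p^{t_k}\cdot q^{t_k}$ are $\mathbb{R}$-bilinear, the first because $\cdot$ is bilinear and $t_k$ is linear by (i), the second for the same reason, so it suffices to verify the identity on the sixteen pairs of basis elements $p,q\in\{1,i,j,ij\}$. On a basis vector $e_a$ each conjugation acts by a sign $\varepsilon_k(e_a)\in\{\pm 1\}$, and the identity reduces to the statement that this sign is multiplicative, namely that the sign attached to the basis vector appearing in $e_a\cdot e_b$ equals $\varepsilon_k(e_a)\,\varepsilon_k(e_b)$. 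The one thing to watch is that products of generators carry real scalar factors, for instance $i\cdot ij=-\alpha j$ and $j\cdot ij=-\beta i$; since those scalars are real they are fixed by every $t_k$, so they do not disturb the sign bookkeeping. Checking the multiplicativity of $\varepsilon_k$ against the multiplication table of $\{1,i,j,ij\}$ is then a short finite verification, and it is the only place where genuine computation enters. I expect this sign-bookkeeping in (iii) to be the main (and only) obstacle, and it is a minor one.
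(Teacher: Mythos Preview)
Your proposal is correct and amounts to the same thing the paper does: the paper's entire proof is the single sentence ``The proofs of the properties can be easily seen by directly computation,'' and your plan is a cleanly organized instance of that direct computation. Your reduction of (iii) to a finite sign check on the basis $\{1,i,j,ij\}$ via bilinearity is a tidier way to carry out the expansion than comparing all sixteen component formulas of $(p\cdot q)^{t_k}$ and $p^{t_k}\cdot q^{t_k}$, but it is not a different method.
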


\begin{proof}
The proofs of the properties can be easily seen by directly computation.
\end{proof}

\section{Some Hyperquadrics and Lie Groups}

In this section we show that some hyperquadrics together with generalized
bicomplex number product are Lie groups and find their Lie algebras. We deal
with the hyperquadric $M_{t_{i}}$
\begin{equation*}
M_{t_{i}}=\left \{ x=\left( x_{1},x_{2},x_{3},x_{4}\right) \in \mathbb{R}%
_{v}^{4}:\text{ }x_{1}x_{3}+\alpha x_{2}x_{4}=0,\text{\ }g_{t_{i}}(x,x)\neq
0\right \}
\end{equation*}%
We consider $M_{t_{i}}$ as the set of generalized bicomplex numbers%
\begin{equation*}
M_{t_{i}}=\left \{ x=x_{1}1+x_{2}i+x_{3}j+x_{4}ij\in \mathbb{R}_{v}^{4}:\text{
}x_{1}x_{3}+\alpha x_{2}x_{4}=0,\text{\ }g_{t_{i}}(x,x)\neq 0\right \}
\end{equation*}%
The components of $M_{t_{i}}$ are easily obtained by representing
generalized bicomplex number multiplication in matrix form%
\begin{equation*}
\tilde{M}_{t_{i}}=\left \{ x=\left(
\begin{array}{cccc}
x_{1} & -\alpha x_{2} & -\beta x_{3} & \alpha \beta x_{4} \\
x_{2} & x_{1} & -\beta x_{4} & -\beta x_{3} \\
x_{3} & -\alpha x_{4} & x_{1} & -\alpha x_{2} \\
x_{4} & x_{3} & x_{2} & x_{1}%
\end{array}%
\right) ,\text{ }x_{1}x_{3}+\alpha x_{2}x_{4}=0,\text{ }g_{t_{i}}(x,x)\neq
0\right \}
\end{equation*}%
where $g_{t_{i}}$ is Euclidean or pseudo-Euclidean metric and it is defined
by $g_{t_{i}}=dx_{1}^{2}+\alpha dx_{2}^{2}-\beta dx_{3}^{2}-\alpha \beta
dx_{4}^{2}.$

\begin{remark}
The norm of any element $x$ on the hyperquadric $M_{t_{i}}$ is given by $%
N_{x}=x\cdot x^{t_{i}}=g_{t_{i}}(x,x).$
\end{remark}

Now we define the hyperquadric $M_{t_{j}}$ as%
\begin{equation*}
M_{t_{j}}=\left \{ x=\left( x_{1},x_{2},x_{3},x_{4}\right) \in \mathbb{R}%
_{v}^{4}:\text{ }x_{1}x_{2}+\beta x_{3}x_{4}=0,\text{\ }g_{t_{j}}(x,x)\neq
0\right \}
\end{equation*}%
We consider $M_{t_{j}}$ as the set of generalized bicomplex numbers%
\begin{equation*}
M_{t_{j}}=\left \{ x=x_{1}1+x_{2}i+x_{3}j+x_{4}ij\in \mathbb{R}_{v}^{4}:\text{
}x_{1}x_{2}+\beta x_{3}x_{4}=0,\text{\ }g_{t_{j}}(x,x)\neq 0\right \}
\end{equation*}%
The components of $M_{t_{j}}$ are easily obtained by representing
generalized bicomplex number multiplication in matrix form%
\begin{equation*}
\tilde{M}_{t_{j}}=\left \{ x=\left(
\begin{array}{cccc}
x_{1} & -\alpha x_{2} & -\beta x_{3} & \alpha \beta x_{4} \\
x_{2} & x_{1} & -\beta x_{4} & -\beta x_{3} \\
x_{3} & -\alpha x_{4} & x_{1} & -\alpha x_{2} \\
x_{4} & x_{3} & x_{2} & x_{1}%
\end{array}%
\right) ,\text{ }x_{1}x_{2}+\beta x_{3}x_{4}=0,\text{ }g_{t_{j}}(x,x)\neq
0\right \}
\end{equation*}%
where $g_{t_{j}}$ is Euclidean or pseudo-Euclidean metric and it is defined
by $g_{t_{j}}=dx_{1}^{2}-\alpha dx_{2}^{2}+\beta dx_{3}^{2}-\alpha \beta
dx_{4}^{2}.$

\begin{remark}
The norm of any element $x$ on the hyperquadric $M_{t_{j}}$ is given by $%
N_{x}=x\cdot x^{t_{j}}=g_{t_{j}}(x,x).$
\end{remark}

We define the hyperquadric $M_{t_{ij}}$%
\begin{equation*}
M_{t_{ij}}=\left \{ x=\left( x_{1},x_{2},x_{3},x_{4}\right) \in \mathbb{R}%
_{v}^{4}:\text{ }x_{1}x_{4}-x_{2}x_{3}=0,\text{\ }g_{t_{ij}}(x,x)\neq
0\right \}
\end{equation*}%
We consider $M_{t_{ij}}$ as the set of generalized bicomplex numbers%
\begin{equation*}
M_{t_{ij}}=\left \{ x=x_{1}1+x_{2}i+x_{3}j+x_{4}ij\in \mathbb{R}_{v}^{4}:%
\text{ }x_{1}x_{4}-x_{2}x_{3}=0,\text{\ }g_{t_{ij}}(x,x)\neq 0\right \}
\end{equation*}%
The components of $M_{t_{ij}}$ are easily obtained by representing
generalized bicomplex number multiplication in matrix form%
\begin{equation*}
\tilde{M}_{t_{ij}}=\left \{ x=\left(
\begin{array}{cccc}
x_{1} & -\alpha x_{2} & -\beta x_{3} & \alpha \beta x_{4} \\
x_{2} & x_{1} & -\beta x_{4} & -\beta x_{3} \\
x_{3} & -\alpha x_{4} & x_{1} & -\alpha x_{2} \\
x_{4} & x_{3} & x_{2} & x_{1}%
\end{array}%
\right) ,\text{ }x_{1}x_{4}-x_{2}x_{3}=0,\text{ }g_{t_{ij}}(x,x)\neq
0\right \}
\end{equation*}%
where $g_{t_{ij}}$ is Euclidean or pseudo-Euclidean metric and it is defined
by $g_{t_{ij}}=dx_{1}^{2}+\alpha dx_{2}^{2}+\beta dx_{3}^{2}+\alpha \beta
dx_{4}^{2}.$

\begin{remark}
The norm of any element $x$ on the hyperquadric $M_{t_{ij}}$ is given by $%
N_{x}=x\cdot x^{t_{ij}}=g_{t_{ij}}(x,x).$
\end{remark}

\begin{theorem}
The set of $M_{t_{i}}$ with generalized bicomplex number product is a Lie
group.
\end{theorem}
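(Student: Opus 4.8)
The plan is to establish the three ingredients of a Lie group separately: that $(M_{t_i},\cdot)$ is an abstract group, that $M_{t_i}$ is a smooth submanifold of $\mathbb{R}_v^4$, and that multiplication and inversion are differentiable. The single fact powering the group axioms is the multiplicativity of the quadratic form $x \mapsto x\cdot x^{t_i}$, so I would prove that first. Since the product of Definition 3 is symmetric in $x$ and $y$, the algebra $C_{\alpha\beta}$ is commutative; combining this with associativity (Theorem 1) and property (iii) of Proposition 1 gives, for all $x,y\in C_{\alpha\beta}$, the identity $(x\cdot y)\cdot(x\cdot y)^{t_i}=(x\cdot x^{t_i})\cdot(y\cdot y^{t_i})$. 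By the computation recorded in Definition 4, the element $x\cdot x^{t_i}$ has real part $g_{t_i}(x,x)=x_1^2+\alpha x_2^2-\beta x_3^2-\alpha\beta x_4^2$ and $j$-part $2(x_1x_3+\alpha x_2x_4)$, so the defining equation of $M_{t_i}$ says exactly that this $j$-part vanishes and the norm $N_x=x\cdot x^{t_i}$ reduces to the real scalar $g_{t_i}(x,x)$ (Remark 1).

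Closure is the crux and the only step that genuinely uses the algebraic structure. For $x,y\in M_{t_i}$ the scalars $N_x$ and $N_y$ are real and nonzero, so the multiplicativity identity forces $N_{x\cdot y}=N_xN_y$ to be a nonzero real number; its vanishing $j$-part is precisely the condition $(x\cdot y)_1(x\cdot y)_3+\alpha(x\cdot y)_2(x\cdot y)_4=0$, and its nonzero real part is $g_{t_i}(x\cdot y,x\cdot y)\neq 0$, so $x\cdot y\in M_{t_i}$. Associativity is inherited from the algebra. The identity element is $1$, which satisfies the constraint and has $N_1=1$. For inverses I would exhibit $x^{-1}=\frac{1}{N_x}\,x^{t_i}$: this makes sense because $N_x\neq 0$, it satisfies $x\cdot x^{-1}=1$, and it lies in $M_{t_i}$ because its coordinates are those of $x^{t_i}$ scaled by $1/N_x$, so the constraint for $x$ rescales to the constraint for $x^{-1}$, while $N_{x^{-1}}=1/N_x\neq 0$ follows from $N_xN_{x^{-1}}=N_1=1$.

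For the manifold structure I would present $M_{t_i}$ as a regular level set: on the open set $\{g_{t_i}(x,x)\neq 0\}$ the function $f(x)=x_1x_3+\alpha x_2x_4$ has gradient $(x_3,\alpha x_4,x_1,\alpha x_2)$, whose only zero (for $\alpha\neq 0$) is the origin, a point excluded by $g_{t_i}\neq 0$; hence $0$ is a regular value and $M_{t_i}$ is a $3$-dimensional smooth submanifold. Differentiability of the operations is then immediate: the product is polynomial in the coordinates, and inversion $x\mapsto\frac{1}{N_x}x^{t_i}$ is rational with denominator $N_x$ nonvanishing on $M_{t_i}$, so both are smooth. I expect the closure argument to be the main obstacle in the sense that it is the only place where one must see that the quadratic constraint is preserved; once the multiplicativity of $x\cdot x^{t_i}$ is in hand this becomes transparent, and the remaining verifications are routine.
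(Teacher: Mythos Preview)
Your argument is correct, and in fact considerably more complete than the paper's own proof. The paper takes a different route: it passes to the matrix realisation $\tilde M_{t_i}\subset Q_{\alpha\beta}$ via the algebra isomorphism $h$ of Theorem~2, asserts that $\tilde M_{t_i}$ is a differentiable manifold and a group under matrix multiplication, exhibits the inverse $y^{-1}=y^{t_i}/N_y$, and then transports the Lie group structure back to $M_{t_i}$ along $h$. In particular the paper does not spell out why the constraint $x_1x_3+\alpha x_2x_4=0$ is preserved under multiplication, nor why the level set is a smooth submanifold; your use of the multiplicativity of $x\mapsto x\cdot x^{t_i}$ and of the regular-value theorem fills exactly those gaps. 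What the paper's approach buys is brevity and a direct link to a matrix Lie group inside $GL(4,\mathbb R)$; what your approach buys is a self-contained verification of closure, the manifold structure, and smoothness of the operations, without ever leaving $C_{\alpha\beta}$. One small remark: your parenthetical ``for $\alpha\neq 0$'' in the gradient computation is unnecessary, since when $\alpha=0$ the vanishing locus of the gradient is $\{x_1=x_3=0\}$, on which $g_{t_i}(x,x)=x_1^2-\beta x_3^2=0$, so those points are already excluded from $M_{t_i}$.
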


\begin{proof}
$\tilde{M}_{t_{i}}$ differentiable manifold and at the same time $\tilde{M}%
_{t_{i}}$ is a group with group operation given by matrix multiplication.
The group function is given by
\begin{equation*}
.:\tilde{M}_{t_{i}}\times \tilde{M}_{t_{i}}\rightarrow \tilde{M}_{t_{i}}
\end{equation*}%
\begin{equation*}
\left( x,y\right) \rightarrow x.y^{-1},
\end{equation*}%
where $y^{-1}$ is obtained as a element of $M_{t_{i}}$ as follows:
\begin{equation*}
y^{-1}=\frac{y^{t_{i}}}{N_{y}}=\frac{1}{y_{1}^{2}+\alpha y_{2}^{2}-\beta
y_{3}^{2}-\alpha \beta y_{4}^{2}.}\left( y_{1},-y_{2},y_{3},-y_{4}\right) .
\end{equation*}%
Since the transformation $h$ is an isomorphism $\left( M_{t_{i}},\cdot
\right) $ is a Lie group.
\end{proof}

We denote the set of all unit generalized bicomplex numbers $x$ on $M_{t_{i}}
$ by $M_{t_{i}}^{\ast }.$ $M_{t_{i}}^{\ast }$ is defined as$\ $
\begin{equation*}
M_{t_{i}}^{\ast }=\left \{ x\in M_{t_{i}}:g_{t_{i}}\left( x,x\right)
=1\right \}
\end{equation*}%
or%
\begin{equation*}
M_{t_{i}}^{\ast }=\left \{ x\in M_{t_{i}}:x_{1}^{2}+\alpha x_{2}^{2}-\beta
x_{3}^{2}-\alpha \beta x_{4}^{2}=1\right \}
\end{equation*}%
$M_{t_{i}}^{\ast }$ is a group with the group operation of generalized
bicomplex multiplication. So we can give the following corollary.

\begin{corollary}
$M_{t_{i}}^{\ast }$ is 2-dimensional Lie subgroup of $M_{t_{i}}.$
\end{corollary}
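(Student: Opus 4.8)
The plan is to realize $M_{t_i}^{\ast}$ as the kernel of the norm homomorphism and then apply Cartan's closed-subgroup theorem. First I would verify that the norm is multiplicative. Because $C_{\alpha\beta}$ is commutative (its generators $i,j,ij$ all commute) and conjugation respects products, $(p\cdot q)^{t_i}=p^{t_i}\cdot q^{t_i}$ by the Proposition, one gets $N_{xy}=(x\cdot y)\cdot(x\cdot y)^{t_i}=(x\cdot x^{t_i})\cdot(y\cdot y^{t_i})=N_x\cdot N_y$. On the locus $M_{t_i}$ the $j$-component of $x\cdot x^{t_i}$ vanishes, which is precisely the defining equation $x_1x_3+\alpha x_2x_4=0$, so $N_x=g_{t_i}(x,x)\in\mathbb{R}^{\ast}$ is a pure scalar. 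Hence $N$ descends to a smooth group homomorphism $N\colon (M_{t_i},\cdot)\to(\mathbb{R}^{\ast},\cdot)$, and multiplicativity simultaneously reproves closure of $M_{t_i}$, since the product of two pure scalars is again a pure scalar.

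Next I would identify $M_{t_i}^{\ast}$ with $\ker N$. Since $1$ is the identity of $(\mathbb{R}^{\ast},\cdot)$, we have $\ker N=\{x\in M_{t_i}:g_{t_i}(x,x)=1\}=M_{t_i}^{\ast}$, and the condition $g_{t_i}(x,x)=1$ automatically guarantees $g_{t_i}(x,x)\neq 0$, so $M_{t_i}^{\ast}\subseteq M_{t_i}$ as required. As the kernel of a smooth homomorphism of Lie groups, $M_{t_i}^{\ast}$ is automatically a closed, embedded (indeed normal) Lie subgroup; this settles the group axioms and the submanifold structure in one stroke. For concreteness, a unit element has $x^{-1}=x^{t_i}/N_x=x^{t_i}=(x_1,-x_2,x_3,-x_4)$, which still satisfies $x_1x_3+\alpha x_2x_4=0$ and has norm $1$, confirming stability under inversion.

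It then remains to pin the dimension at $2$. The hyperquadric $M_{t_i}$ is the zero set of the single function $f_1=x_1x_3+\alpha x_2x_4$ inside the open set $\{g_{t_i}\neq 0\}\subset\mathbb{R}^4$, and $\nabla f_1=(x_3,\alpha x_4,x_1,\alpha x_2)$ vanishes only at the origin, so (using $\alpha\neq 0$) $M_{t_i}$ is $3$-dimensional and $\dim M_{t_i}^{\ast}=\dim M_{t_i}-\dim\operatorname{im}N=3-1=2$. The step I expect to require genuine care is verifying that $N$ is a submersion, equivalently that $(0,1)$ is a regular value of $\Phi=(f_1,f_2)$ with $f_2=x_1^2+\alpha x_2^2-\beta x_3^2-\alpha\beta x_4^2$. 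Imposing $\nabla f_2=\lambda\,\nabla f_1$ forces $(\lambda^2+\beta)x_3=(\lambda^2+\beta)x_4=0$; the generic branch $\lambda^2+\beta\neq 0$ gives $x=0$, while the degenerate branch $\lambda^2=-\beta$ must be excluded by substituting back into the two constraints, where $f_1=\lambda(x_3^2+\alpha x_4^2)=0$ is incompatible with $f_2=-2\beta(x_3^2+\alpha x_4^2)=1$. This case analysis, leaning on $\alpha,\beta\neq 0$, is the crux that guarantees $(0,1)$ is a regular value and hence that $M_{t_i}^{\ast}$ is a genuine $2$-dimensional embedded Lie subgroup of $M_{t_i}$.
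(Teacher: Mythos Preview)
Your argument is correct and considerably more thorough than what the paper does. The paper gives no proof at all: it simply observes, in the sentence preceding the corollary, that $M_{t_i}^{\ast}$ is a group under the generalized bicomplex product and then asserts the corollary. Neither closure under multiplication and inversion nor the dimension count is verified explicitly; the authors apparently regard both as evident from the defining equations once Theorem~3 is in hand.

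By contrast, you organise the proof around the norm homomorphism $N\colon M_{t_i}\to\mathbb{R}^{\ast}$, identify $M_{t_i}^{\ast}=\ker N$, and then invoke the closed-subgroup theorem together with a regular-value check to obtain both the Lie-subgroup structure and the dimension simultaneously. This is a genuinely different route: the paper relies on bare inspection, whereas you supply the structural reason (multiplicativity of $N$, itself derived from commutativity of $C_{\alpha\beta}$ and Proposition~1(iii)) and a rigorous transversality computation for the dimension. Your approach also makes the hidden hypothesis $\alpha,\beta\neq 0$ explicit, which the paper never states but tacitly assumes throughout. The trade-off is length: the paper's one-line remark suffices for readers willing to take the group axioms and the codimension count on faith, while your argument would be the right one if any of those points were in doubt.
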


\begin{theorem}
The Lie algebra of Lie group $M_{t_{i}}$ is $sp\left \{
X_{1},X_{2,}X_{4}\right \} $ such that left invariant vector fields $%
X_{1},X_{2,}X_{4}$ are given by%
\begin{eqnarray*}
X_{1} &=&\left( x_{1},x_{2},x_{3},x_{4}\right)  \\
X_{2} &=&\left( -\alpha x_{2},x_{1},-\alpha x_{4},x_{3}\right)  \\
X_{4} &=&\left( \alpha \beta x_{4},-\beta x_{3},-\alpha x_{2},x_{1}\right)
\end{eqnarray*}
\end{theorem}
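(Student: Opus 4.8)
The plan is to identify the Lie algebra with the tangent space $T_e M_{t_i}$ at the identity and to exhibit the three stated fields as the left invariant extensions of a natural basis of that tangent space. First I would record that the identity of $(M_{t_i},\cdot)$ is $e = 1 = (1,0,0,0)$, which indeed satisfies the defining equation $x_1 x_3 + \alpha x_2 x_4 = 0$ and has $g_{t_i}(e,e) = 1 \neq 0$. The essential structural observation is that for fixed $x$ the left translation $L_x(y) = x \cdot y$ is a linear map of $\mathbb{R}^4$, because the generalized bicomplex product is bilinear in its arguments; hence $(dL_x)_e = L_x$, and the left invariant vector field determined by $v \in T_e M_{t_i}$ is simply $X_v(x) = x \cdot v$. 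Left invariance is then immediate from associativity (established in the theorem that $C_{\alpha\beta}$ is a real algebra): $(dL_g)_x\, X_v(x) = g \cdot (x \cdot v) = (g \cdot x) \cdot v = X_v(g \cdot x)$.

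Next I would compute $T_e M_{t_i}$ by differentiating the constraint $f(x) = x_1 x_3 + \alpha x_2 x_4$. Its differential $df = x_3\, dx_1 + x_1\, dx_3 + \alpha x_4\, dx_2 + \alpha x_2\, dx_4$ reduces at $e = (1,0,0,0)$ to $df_e = dx_3$, so $T_e M_{t_i} = \{(a_1, a_2, 0, a_4) : a_k \in \mathbb{R}\} = sp\{1, i, ij\}$. This is three-dimensional, matching the dimension of the hyperquadric, which also explains why the basis is indexed by $1, 2, 4$, the $j$-direction being cut out by the constraint.

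Finally I would evaluate the left invariant fields attached to this basis, $X_1(x) = x \cdot 1$, $X_2(x) = x \cdot i$, $X_4(x) = x \cdot ij$, by substituting $y = 1, i, ij$ into the product formula of the defining definition. This routine computation yields $X_1 = (x_1, x_2, x_3, x_4)$, $X_2 = (-\alpha x_2, x_1, -\alpha x_4, x_3)$, and $X_4 = (\alpha\beta x_4, -\beta x_3, -\alpha x_2, x_1)$, precisely the asserted fields; they are automatically tangent to $M_{t_i}$ since $L_x$ maps the group to itself. The only point needing a word of justification is that these three fields remain linearly independent at every $x \in M_{t_i}$, so that they span $T_x M_{t_i}$ pointwise; this follows because $L_x$ is invertible whenever $g_{t_i}(x,x) \neq 0$, with inverse $y \mapsto y \cdot x^{-1}$ where $x^{-1} = x^{t_i}/N_x$, so $L_x$ carries the independent set $\{1, i, ij\}$ to an independent set. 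This mild check is the main obstacle; everything else is the direct product calculation.
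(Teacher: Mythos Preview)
Your proof is correct and follows essentially the same approach as the paper: compute $T_eM_{t_i}$ by differentiating the constraint $x_1x_3+\alpha x_2x_4=0$ at the identity to see that the $j$-direction is killed, then obtain the left invariant fields as $X_m(x)=x\cdot e_m$ for $e_m\in\{1,i,ij\}$ via the linearity of left translation. Your write-up is in fact slightly more thorough, since you make explicit the linearity of $L_x$, the associativity argument for left invariance, and the pointwise independence via invertibility of $L_x$, none of which the paper spells out.
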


\begin{proof}
Let us find the Lie algebra of Lie group $M_{t_{i}}.$ Let
\begin{equation*}
a\left( t\right) =a_{1}\left( t\right) 1+a_{2}\left( t\right) i+a_{3}\left(
t\right) j+a_{4}\left( t\right) ij
\end{equation*}%
be a curve on $M_{t_{i}}$ such that $a\left( 0\right) =1,$ i.e. $a_{1}\left(
0\right) =1,$ $a_{m}\left( 0\right) =0$ for $m=2,3,4.$ Differentiation of
the equation%
\begin{equation*}
a_{1}\left( t\right) a_{3}\left( t\right) +\alpha a_{2}\left( t\right)
a_{4}\left( t\right) =0
\end{equation*}%
yields the equation%
\begin{equation*}
a_{1}^{\prime }\left( t\right) a_{3}\left( t\right) +a_{1}\left( t\right)
a_{3}^{\prime }\left( t\right) +\alpha a_{2}^{\prime }\left( t\right)
a_{4}\left( t\right) +\alpha a_{2}\left( t\right) a_{4}^{\prime }\left(
t\right) =0
\end{equation*}%
Substituting $t=0$, we obtain $a_{3}^{\prime }\left( 0\right) =0.$ The Lie
algebra is thus constituted by vectors of the form $\zeta =\left. \zeta
_{m}\left( \frac{\partial }{\partial a_{m}}\right) \right \vert _{\alpha =1}$
where $m=1,2,4.$ The vector $\zeta $ is formally written in the form $\zeta
=\zeta _{1}+\zeta _{2}j+\zeta _{4}ij.$ Let us find the left invariant vector
field $X$ on $M_{t_{i}}$ for which $\left. X\right \vert _{\alpha =1}=\zeta
. $ Let $b\left( t\right) $ be a curve on $M_{t_{i}}$ such that $b\left(
0\right) =1,$ $b^{\prime }\left( 0\right) =\zeta .$ Then $L_{x}\left(
b\left( t\right) \right) =xb\left( t\right) $ is the left translation of the
curve $b\left( t\right) $ by the generalized bicomplex number $x.$ Let $%
L_{x}^{\ast }$ be the differentiation of $L_{x}$ left translation. In that
case $L_{x}^{\ast }\left( b^{\prime }\left( 0\right) \right) =x\zeta .$ In
particular, denote by $X_{m}$ those left invariant vector fields on $%
M_{t_{i}}$ for which%
\begin{equation*}
\left. X_{m}\right \vert _{\alpha =1}=\left. \frac{\partial }{\partial a_{m}}%
\right \vert _{\alpha =1}
\end{equation*}%
where $m=1,2,4.$ These three vector fields are represented at the point $%
\alpha =1$ by the generalized bicomplex units $1,i,ij$. For the components
of these vector fields at the point $x=x_{1}1+x_{2}i+x_{3}j+x_{4}ij,$ we
have $\left( X_{1}\right) _{x}=x1,$ $\left( X_{2}\right) _{x}=xi$ and $%
\left( X_{4}\right) _{x}=xij.$%
\begin{eqnarray*}
X_{1} &=&\left( x_{1},x_{2},x_{3},x_{4}\right) , \\
X_{2} &=&\left( -\alpha x_{2},x_{1},-\alpha x_{4},x_{3}\right) , \\
X_{4} &=&\left( \alpha \beta x_{4},-\beta x_{3},-\alpha x_{2},x_{1}\right) ,
\end{eqnarray*}%
where all the partial derivaties are at the point $x.$
\end{proof}

\begin{corollary}
The Lie algebra of Lie group $M_{t_{i}}^{\ast }$ is $sp\left \{
X_{2,}X_{4}\right \} $.
\end{corollary}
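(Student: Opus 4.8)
The plan is to identify the Lie algebra of $M_{t_{i}}^{\ast}$ with the tangent space at the identity element $1=(1,0,0,0)$, exactly as was done for $M_{t_{i}}$ in the preceding theorem, but now imposing the additional defining relation $g_{t_{i}}(x,x)=1$ that distinguishes $M_{t_{i}}^{\ast}$ from $M_{t_{i}}$. Since $M_{t_{i}}^{\ast}$ is a Lie subgroup of $M_{t_{i}}$, its Lie algebra is obtained by cutting down the tangent space $T_{1}M_{t_{i}}=sp\left\{ X_{1},X_{2},X_{4}\right\}$ by the kernel of the differential of the extra constraint.

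Concretely, I would take a curve $a(t)=a_{1}(t)1+a_{2}(t)i+a_{3}(t)j+a_{4}(t)ij$ lying on $M_{t_{i}}^{\ast}$ with $a(0)=1$, so that $a_{1}(0)=1$ and $a_{2}(0)=a_{3}(0)=a_{4}(0)=0$. Such a curve satisfies both defining equations of $M_{t_{i}}^{\ast}$: the relation $a_{1}a_{3}+\alpha a_{2}a_{4}=0$ inherited from $M_{t_{i}}$, and the unit-norm relation $a_{1}^{2}+\alpha a_{2}^{2}-\beta a_{3}^{2}-\alpha \beta a_{4}^{2}=1$. Differentiating the first relation and setting $t=0$ gives $a_{3}^{\prime}(0)=0$, exactly as in the theorem. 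Differentiating the unit-norm relation yields $2a_{1}a_{1}^{\prime}+2\alpha a_{2}a_{2}^{\prime}-2\beta a_{3}a_{3}^{\prime}-2\alpha \beta a_{4}a_{4}^{\prime}=0$, and evaluating at $t=0$ leaves only the surviving term $2a_{1}(0)a_{1}^{\prime}(0)=0$, whence $a_{1}^{\prime}(0)=0$.

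Thus every tangent vector $\zeta=a^{\prime}(0)$ at the identity has vanishing $1$- and $j$-components, so $\zeta=\zeta_{2}i+\zeta_{4}ij$ and $T_{1}M_{t_{i}}^{\ast}=sp\left\{ i,ij\right\}$. These are precisely the values at the identity of the left invariant vector fields $X_{2}$ and $X_{4}$ of the ambient group $M_{t_{i}}$. Because left translation on $M_{t_{i}}^{\ast}$ is the restriction of left translation on $M_{t_{i}}$, extending $i$ and $ij$ by left translation within $M_{t_{i}}^{\ast}$ reproduces the restrictions of $X_{2}$ and $X_{4}$; consequently the Lie algebra of $M_{t_{i}}^{\ast}$ is $sp\left\{ X_{2},X_{4}\right\}$, which is also consistent with $M_{t_{i}}^{\ast}$ being a $2$-dimensional subgroup.

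I expect the only point requiring care is verifying that $X_{2}$ and $X_{4}$ remain tangent to the codimension-one submanifold $M_{t_{i}}^{\ast}$, so that their restrictions genuinely define vector fields there; this follows from $M_{t_{i}}^{\ast}$ being a subgroup, whose left translations preserve it, rather than from any further computation. The differentiation of the two constraints at the identity is otherwise entirely routine.
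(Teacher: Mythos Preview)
Your argument is correct and is precisely the natural one: you repeat the tangent-space computation of the preceding theorem while additionally differentiating the unit-norm constraint $a_{1}^{2}+\alpha a_{2}^{2}-\beta a_{3}^{2}-\alpha\beta a_{4}^{2}=1$ at $t=0$, which kills the $X_{1}$-direction and leaves $sp\{X_{2},X_{4}\}$. The paper itself offers no separate proof for this corollary; it is stated immediately after the theorem as an evident consequence, so your write-up simply supplies the details the authors left implicit.
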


\begin{theorem}
The set of $M_{t_{j}}$ together with generalized bicomplex number product is
a Lie group.
\end{theorem}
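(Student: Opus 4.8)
The plan is to mirror the argument already carried out for $M_{t_i}$, with the $t_i$-conjugation and its norm everywhere replaced by the $t_j$-conjugation. First I would note that $\tilde{M}_{t_j}$ is a differentiable manifold: inside $Q_{\alpha\beta}\cong\mathbb{R}^4$ it is cut out by the single smooth equation $x_1x_2+\beta x_3x_4=0$ together with the open condition $g_{t_j}(x,x)\neq 0$, and matrix multiplication is smooth. Since the isomorphism $h$ transports the generalized bicomplex product to matrix multiplication, the whole problem reduces to checking that $M_{t_j}$ is closed under $\cdot$ and under inverses, and then invoking $h$ to conclude that $(M_{t_j},\cdot)$ inherits the Lie group structure of the matrix group $\tilde{M}_{t_j}$.

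The key observation is that the defining equation $x_1x_2+\beta x_3x_4=0$ says precisely that the $i$-component of $x\cdot x^{t_j}$ vanishes, i.e. that $N_x=x\cdot x^{t_j}=g_{t_j}(x,x)$ is a real scalar (this is the content of the remark attached to $M_{t_j}$). Thus membership in $M_{t_j}$ is equivalent to $N_x\in\mathbb{R}\setminus\{0\}$, and closure becomes a multiplicativity statement for this norm. To prove it I would use two facts: the product $\cdot$ is commutative and associative (the four component formulas defining $\cdot$ are all symmetric in $x$ and $y$, and associativity comes from the matrix model), and the conjugation identity $(x\cdot y)^{t_j}=x^{t_j}\cdot y^{t_j}$ from the proposition on conjugations. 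Combining these in the commutative associative algebra $C_{\alpha\beta}$ gives
\[
(x\cdot y)\cdot (x\cdot y)^{t_j}=(x\cdot x^{t_j})\cdot (y\cdot y^{t_j})=N_x\,N_y,
\]
which is a nonzero real whenever $N_x,N_y$ are. Hence the $i$-component of $(x\cdot y)\cdot(x\cdot y)^{t_j}$ vanishes, so $x\cdot y\in M_{t_j}$, and the norm is multiplicative: $N_{x\cdot y}=N_xN_y$.

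For inverses I would set $y^{-1}=y^{t_j}/N_y=(y_1,y_2,-y_3,-y_4)/(y_1^2-\alpha y_2^2+\beta y_3^2-\alpha\beta y_4^2)$ and verify, using $(y^{t_j})^{t_j}=y$, that $y^{-1}\cdot (y^{-1})^{t_j}=1/N_y$ is a nonzero real, so that $y^{-1}\in M_{t_j}$; a direct check (or passage to the matrix form through $h$) confirms $y\cdot y^{-1}=1$. The group function $(x,y)\mapsto x\cdot y^{-1}$ is then smooth, being a rational map of the coordinates whose only denominator $N_y$ never vanishes on $M_{t_j}$. Since $h$ is an algebra isomorphism carrying $M_{t_j}$ bijectively onto $\tilde{M}_{t_j}$, smoothness of the operations on the matrix side transfers back and $(M_{t_j},\cdot)$ is a Lie group. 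The only step that requires genuine care is the closure verification, and once the commutativity of $\cdot$ is recorded the norm identity $N_{x\cdot y}=N_xN_y$ makes it immediate; I therefore do not expect any serious obstacle beyond bookkeeping.
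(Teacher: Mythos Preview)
Your proposal is correct and follows essentially the same route as the paper: the paper states this theorem without proof, relying on the analogy with the $M_{t_i}$ case, whose proof writes the inverse as $y^{-1}=y^{t_i}/N_y$, notes $\tilde{M}_{t_i}$ is a differentiable manifold and group under matrix multiplication, and transfers the conclusion through the isomorphism $h$. Your argument is in fact more complete than the paper's sketch, since you explicitly verify closure via the multiplicativity $N_{x\cdot y}=N_xN_y$ (using commutativity of $\cdot$ and $(x\cdot y)^{t_j}=x^{t_j}\cdot y^{t_j}$), a step the paper leaves implicit.
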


\begin{corollary}
$M_{t_{j}}^{\ast }$ is 2-dimensional Lie subgroup of $M_{t_{j}}.$
\end{corollary}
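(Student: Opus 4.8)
The plan is to prove the corollary in two stages: first that $M_{t_{j}}^{\ast}$ is a subgroup of $M_{t_{j}}$ under the generalized bicomplex product, and then that it is a two-dimensional embedded submanifold, so that it qualifies as a Lie subgroup. Here $M_{t_{j}}^{\ast}$ denotes the unit generalized bicomplex numbers on $M_{t_{j}}$, that is,
\[
M_{t_{j}}^{\ast} = \left\{ x \in M_{t_{j}} : x_{1}^{2} - \alpha x_{2}^{2} + \beta x_{3}^{2} - \alpha\beta x_{4}^{2} = 1 \right\},
\]
in complete analogy with $M_{t_{i}}^{\ast}$.

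The algebraic heart of the argument is multiplicativity of the norm. Using commutativity and associativity of the product $\cdot$ together with property (iii) of the Proposition, for all $x,y \in C_{\alpha\beta}$ I would compute
\[
N_{x\cdot y} = (x\cdot y)\cdot(x\cdot y)^{t_{j}} = (x\cdot x^{t_{j}})\cdot(y\cdot y^{t_{j}}) = N_{x}\cdot N_{y}.
\]
The key remark is that, by the Definition, the $i$-component of $x\cdot x^{t_{j}}$ equals $2(x_{1}x_{2} + \beta x_{3}x_{4})$, so a point lies on $M_{t_{j}}$ exactly when its norm $N_{x}$ is real. With this in hand the subgroup axioms are immediate: the unit $1$ has $N_{1} = 1$, so $1 \in M_{t_{j}}^{\ast}$; if $x,y \in M_{t_{j}}^{\ast}$ then $N_{x\cdot y} = N_{x}N_{y} = 1$ is real, so $x\cdot y$ again lies on $M_{t_{j}}$ with unit norm; and since $N_{x} = 1$ the inverse is $x^{-1} = x^{t_{j}} = (x_{1},x_{2},-x_{3},-x_{4})$, which satisfies $x_{1}x_{2} + \beta x_{3}x_{4} = 0$ and has $N_{x^{-1}} = N_{x}^{-1} = 1$. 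Hence $M_{t_{j}}^{\ast}$ is closed under products and inverses.

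For the differentiable structure I would regard the norm as a Lie group homomorphism $N \colon M_{t_{j}} \to (\mathbb{R}\setminus\{0\}, \times)$, $x \mapsto g_{t_{j}}(x,x)$, which is well defined and multiplicative by the computation above. Then $M_{t_{j}}^{\ast} = N^{-1}(1) = \ker N$ is a closed subgroup of the three-dimensional Lie group $M_{t_{j}}$, and Cartan's closed subgroup theorem makes it an embedded Lie subgroup. Since $N$ is a submersion onto the one-dimensional group $\mathbb{R}\setminus\{0\}$, its kernel has dimension $3-1 = 2$. Equivalently, and more in keeping with the computational style here, one verifies that
\[
\Phi(x) = \bigl(x_{1}x_{2} + \beta x_{3}x_{4},\ x_{1}^{2} - \alpha x_{2}^{2} + \beta x_{3}^{2} - \alpha\beta x_{4}^{2}\bigr)
\]
has Jacobian of rank $2$ along $M_{t_{j}}^{\ast}$, so that $M_{t_{j}}^{\ast} = \Phi^{-1}(0,1)$ is a regular surface in $\mathbb{R}^{4}$.

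The subgroup verification is entirely routine once norm-multiplicativity is established. The one step that genuinely needs attention is the dimension count: one must confirm that the two scalar constraints defining $M_{t_{j}}^{\ast}$ are functionally independent along it---equivalently that $N$ is a submersion---since this is what forces codimension exactly two rather than a degeneracy. This mirrors the earlier analysis of $M_{t_{i}}^{\ast}$ and its Lie algebra $sp\{X_{2},X_{4}\}$.
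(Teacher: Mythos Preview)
Your argument is correct and in fact considerably more thorough than the paper's own treatment. In the paper this corollary (like its analogues for $M_{t_{i}}^{\ast}$ and $M_{t_{ij}}^{\ast}$) is stated without proof: immediately before the parallel Corollary~2 the authors simply assert that $M_{t_{i}}^{\ast}$ ``is a group with the group operation of generalized bicomplex multiplication'' and then record the corollary, relying implicitly on the norm formula $N_{x}=x\cdot x^{t_{j}}=g_{t_{j}}(x,x)$ from Remark~2 and on the fact that one extra regular scalar constraint drops the dimension from three to two.

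Your proof supplies exactly the details the paper omits: the multiplicativity $N_{x\cdot y}=N_{x}N_{y}$ via Proposition~1(iii), the explicit verification that $x^{-1}=x^{t_{j}}$ stays in $M_{t_{j}}^{\ast}$, and a genuine dimension argument (either via Cartan's theorem applied to the homomorphism $N$, or via the rank of the Jacobian of $\Phi$). These are not different in spirit from what the paper intends---they are simply the missing justifications. The only remark is that your closing caveat about checking independence of the two constraints is well taken; the paper never addresses this point explicitly, so your inclusion of it is an improvement rather than a deviation.
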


\begin{theorem}
The Lie algebra of Lie group $M_{t_{j}}$ is $sp\left \{
X_{1},X_{3,}X_{4}\right \} $ such that left invariant vector fields $%
X_{1},X_{3,}X_{4}\ $are given by
\begin{eqnarray*}
X_{1} &=&\left( x_{1},x_{2},x_{3},x_{4}\right) , \\
X_{3} &=&\left( -\beta x_{3},-\beta x_{4},x_{1},x_{2}\right) , \\
X_{4} &=&\left( \alpha \beta x_{4},-\beta x_{3},-\alpha x_{2},x_{1}\right) .
\end{eqnarray*}
\end{theorem}

\begin{corollary}
The Lie algebra of Lie group $M_{t_{j}}^{\ast }$ is $sp\left \{
X_{3,}X_{4}\right \} $
\end{corollary}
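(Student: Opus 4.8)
The plan is to follow the same curve-differentiation scheme used in the theorem computing the Lie algebra of $M_{t_{j}}$, but now to add the extra linear condition coming from the unit-norm equation that cuts out $M_{t_{j}}^{\ast}$. Recall that $M_{t_{j}}^{\ast}$ is obtained from $M_{t_{j}}$ by imposing the single additional equation $g_{t_{j}}(x,x)=x_{1}^{2}-\alpha x_{2}^{2}+\beta x_{3}^{2}-\alpha \beta x_{4}^{2}=1$, and that $M_{t_{j}}^{\ast}$ is already known to be a $2$-dimensional Lie subgroup. Hence its Lie algebra is simply the tangent space at the identity $1=(1,0,0,0)$, and the task reduces to deciding which of the three left invariant fields $X_{1},X_{3},X_{4}$ spanning the Lie algebra of $M_{t_{j}}$ stay tangent to this smaller level set.

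First I would take a curve $a(t)=a_{1}(t)1+a_{2}(t)i+a_{3}(t)j+a_{4}(t)ij$ lying on $M_{t_{j}}^{\ast}$ with $a(0)=1$, i.e. $a_{1}(0)=1$ and $a_{m}(0)=0$ for $m=2,3,4$. Such a curve satisfies both defining equations. Differentiating the hyperquadric equation $a_{1}a_{2}+\beta a_{3}a_{4}=0$ and evaluating at $t=0$ reproduces, exactly as in the parent theorem, the condition $a_{2}^{\prime}(0)=0$. Next I would differentiate the new constraint
\[
a_{1}(t)^{2}-\alpha a_{2}(t)^{2}+\beta a_{3}(t)^{2}-\alpha \beta a_{4}(t)^{2}=1,
\]
obtaining $2a_{1}a_{1}^{\prime}-2\alpha a_{2}a_{2}^{\prime}+2\beta a_{3}a_{3}^{\prime}-2\alpha\beta a_{4}a_{4}^{\prime}=0$; substituting $t=0$ leaves only the term $2a_{1}^{\prime}(0)=0$, so $a_{1}^{\prime}(0)=0$.

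Thus the tangent vectors to $M_{t_{j}}^{\ast}$ at the identity are exactly those of the form $\zeta=\zeta_{3}j+\zeta_{4}ij$ with $\zeta_{1}=\zeta_{2}=0$, a $2$-dimensional space matching the known dimension of the subgroup. To finish, I would extend the two identity directions $j$ and $ij$ to left invariant fields by left translation, precisely as in the parent theorem, obtaining $(X_{3})_{x}=x\cdot j$ and $(X_{4})_{x}=x\cdot ij$, that is $X_{3}=(-\beta x_{3},-\beta x_{4},x_{1},x_{2})$ and $X_{4}=(\alpha\beta x_{4},-\beta x_{3},-\alpha x_{2},x_{1})$. Since these are already the left invariant fields appearing in the Lie algebra of $M_{t_{j}}$ and they restrict to $M_{t_{j}}^{\ast}$, we conclude that the Lie algebra of $M_{t_{j}}^{\ast}$ is $sp\{X_{3},X_{4}\}$.

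The computation is entirely routine; the only point requiring care is confirming that the norm constraint eliminates exactly the radial direction $X_{1}$. Equivalently, writing $N(x)=g_{t_{j}}(x,x)$, the differential at the identity is $dN_{e}(v)=2g_{t_{j}}(e,v)=2v_{1}$, so among $X_{1}|_{e}=(1,0,0,0)$, $X_{3}|_{e}=(0,0,1,0)$ and $X_{4}|_{e}=(0,0,0,1)$ only $X_{1}$ fails the tangency condition $v_{1}=0$. Once this is checked, closure under the Lie bracket is automatic because $M_{t_{j}}^{\ast}$ is a subgroup, so no bracket computation is needed.
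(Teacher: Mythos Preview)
Your proposal is correct and follows exactly the approach implicit in the paper: the corollary is stated without proof, being an immediate consequence of the preceding theorem together with the unit-norm condition, and your argument simply carries out the same curve-differentiation computation as in the proof of the Lie algebra of $M_{t_i}$ (Theorem 4) with the one additional constraint $g_{t_j}(x,x)=1$, which kills the radial field $X_{1}$.
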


\begin{theorem}
The set of $M_{t_{ij}}$ together with generalized bicomplex number product
is a Lie group.
\end{theorem}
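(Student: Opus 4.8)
The plan is to mirror the argument already used for $M_{t_i}$: show that $M_{t_{ij}}$ is simultaneously a group under the generalized bicomplex product and an embedded differentiable manifold on which multiplication and inversion are smooth, and then transport everything through the algebra isomorphism $h$ of the earlier theorem to the matrix model $\tilde{M}_{t_{ij}}$. The one genuinely nontrivial point is closure of $M_{t_{ij}}$ under the product; the cleanest route avoids a coordinatewise expansion of $x\cdot y$ and instead exploits that the defining quadric is precisely the vanishing of the $ij$-part of the norm form $x\cdot x^{t_{ij}}$.

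For closure, note first that the generalized bicomplex product is commutative (immediate from the product formula, where every coefficient is symmetric in $x$ and $y$), and that conjugation is multiplicative, $(x\cdot y)^{t_{ij}}=x^{t_{ij}}\cdot y^{t_{ij}}$, by part (iii) of the Proposition above. Writing $S(x)=x_1^2+\alpha x_2^2+\beta x_3^2+\alpha\beta x_4^2$ and $T(x)=x_1x_4-x_2x_3$, the Definition gives $x\cdot x^{t_{ij}}=S(x)+2T(x)\,ij$, so for $x\in M_{t_{ij}}$ (where $T(x)=0$) the element $x\cdot x^{t_{ij}}=N_x$ is a real scalar. Hence for $x,y\in M_{t_{ij}}$, using commutativity and associativity, $(x\cdot y)\cdot(x\cdot y)^{t_{ij}}=(x\cdot x^{t_{ij}})\cdot(y\cdot y^{t_{ij}})=N_xN_y$ is again a scalar. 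Comparing with $(x\cdot y)\cdot(x\cdot y)^{t_{ij}}=S(x\cdot y)+2T(x\cdot y)\,ij$ forces $T(x\cdot y)=0$ and $N_{x\cdot y}=N_xN_y$; since $N_x,N_y\neq0$ this is nonzero, so $x\cdot y\in M_{t_{ij}}$.

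The remaining group axioms are quick. The unit $1=(1,0,0,0)$ satisfies $T(1)=0$ and $N_1=1\neq0$, so $1\in M_{t_{ij}}$ and is a two-sided identity. For $y\in M_{t_{ij}}$ I would take $y^{-1}=y^{t_{ij}}/N_y=(y_1,-y_2,-y_3,y_4)/N_y$; since $y\cdot y^{t_{ij}}=N_y$ is scalar this is a genuine two-sided inverse, and a one-line check gives $(y_1)(y_4)-(-y_2)(-y_3)=y_1y_4-y_2y_3=0$, so $y^{-1}$ again lies in $M_{t_{ij}}$. Associativity is inherited from the algebra.

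Finally, for the smooth structure, $M_{t_{ij}}$ is the zero set of $f(x)=x_1x_4-x_2x_3$ inside the open set $\{g_{t_{ij}}(x,x)\neq0\}$; the gradient $\nabla f=(x_4,-x_3,-x_2,x_1)$ vanishes only at the origin, which is excluded by $N_x\neq0$, so $0$ is a regular value and $M_{t_{ij}}$ is a $3$-dimensional embedded submanifold of $\mathbb{R}_v^4$. The product is polynomial and inversion is rational with nonvanishing denominator $N_y$, hence both are smooth; equivalently, passing through $h$ realizes $(M_{t_{ij}},\cdot)$ as the matrix group $\tilde{M}_{t_{ij}}$, and since $h$ is an algebra isomorphism this completes the proof that $M_{t_{ij}}$ is a Lie group. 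I expect the closure step above to be the main obstacle, and the norm-multiplicativity trick to be the key to handling it cleanly.
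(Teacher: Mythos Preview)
Your proof is correct and follows the same overall template as the paper---mirror the argument given for $M_{t_i}$, use the inverse formula $y^{-1}=y^{t_{ij}}/N_y$, and transport through the algebra isomorphism $h$ to the matrix model $\tilde{M}_{t_{ij}}$. In fact the paper states this theorem without proof, leaving it implicit that the $M_{t_i}$ argument carries over verbatim.

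Where you genuinely add something is in the closure step. The paper's proof for $M_{t_i}$ simply asserts that $\tilde{M}_{t_i}$ is a group under matrix multiplication, without verifying that the defining quadric $x_1x_3+\alpha x_2x_4=0$ (here $x_1x_4-x_2x_3=0$) is preserved under the product. Your norm-multiplicativity argument---observing that $x\cdot x^{t_{ij}}$ is a real scalar precisely when $T(x)=0$, then using commutativity and $(x\cdot y)^{t_{ij}}=x^{t_{ij}}\cdot y^{t_{ij}}$ to get $N_{x\cdot y}=N_xN_y$---is a clean, conceptual way to fill this gap that avoids a tedious coordinate computation. Likewise, your regular-value argument for the $3$-manifold structure and the explicit check that $y^{-1}\in M_{t_{ij}}$ are details the paper omits entirely. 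So your proof is the same in spirit but strictly more complete; what the paper gains in brevity, you recover in rigor.
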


\begin{corollary}
$M_{t_{ij}}^{\ast }$ is 2-dimensional Lie subgroup of $M_{t_{ij}}.$
\end{corollary}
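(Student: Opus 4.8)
The plan is to mirror the treatment of $M_{t_{i}}^{\ast}$ in Corollary 2: first exhibit $M_{t_{ij}}^{\ast}$ as a subgroup of the Lie group $M_{t_{ij}}$, and then argue that it is a $2$-dimensional embedded submanifold, so that it inherits the structure of a Lie subgroup. Explicitly, I would set
\[
M_{t_{ij}}^{\ast}=\left\{x\in M_{t_{ij}}:g_{t_{ij}}(x,x)=1\right\}=\left\{x\in M_{t_{ij}}:x_{1}^{2}+\alpha x_{2}^{2}+\beta x_{3}^{2}+\alpha \beta x_{4}^{2}=1\right\},
\]
in complete analogy with $M_{t_{i}}^{\ast}$, and show that the norm $N_{x}=x\cdot x^{t_{ij}}$ distinguishes this locus as a kernel-type subgroup.

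The subgroup structure rests on multiplicativity of the norm. First I would record that $C_{\alpha \beta}$ is commutative, which is immediate from the symmetry of the product in Definition 3 under interchange of $x$ and $y$ (the relation $ij=ji$ is what makes each of the four components symmetric). For $x\in M_{t_{ij}}$ the defining relation $x_{1}x_{4}-x_{2}x_{3}=0$ forces the $ij$-component of $x\cdot x^{t_{ij}}$ in Definition 4 to vanish, so $N_{x}=g_{t_{ij}}(x,x)$ is a genuine real scalar (Remark 3). Using property (iii) of Proposition 1, namely $(x\cdot y)^{t_{ij}}=x^{t_{ij}}\cdot y^{t_{ij}}$, together with commutativity and associativity, I would compute
\[
N_{x\cdot y}=(x\cdot y)\cdot(x\cdot y)^{t_{ij}}=(x\cdot x^{t_{ij}})\cdot(y\cdot y^{t_{ij}})=N_{x}N_{y}.
\]
Hence $N\colon M_{t_{ij}}\to \mathbb{R}\setminus\{0\}$ is a group homomorphism. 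Since $1=(1,0,0,0)$ satisfies $N_{1}=1$, and $N_{x}=N_{y}=1$ gives $N_{x\cdot y}=1$, the set is closed under the product. For inverses, the group inverse inside $M_{t_{ij}}$ is $x^{-1}=x^{t_{ij}}/N_{x}$, which reduces to $x^{-1}=x^{t_{ij}}=(x_{1},-x_{2},-x_{3},x_{4})$ when $N_{x}=1$; this element again satisfies $x_{1}x_{4}-x_{2}x_{3}=0$ and has $N_{x^{-1}}N_{x}=N_{1}=1$, so it lies in $M_{t_{ij}}^{\ast}$. Thus $M_{t_{ij}}^{\ast}=N^{-1}(1)$ is a subgroup.

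For the manifold structure I would view $M_{t_{ij}}^{\ast}$ as the common zero set of $f(x)=x_{1}x_{4}-x_{2}x_{3}$ and $h(x)=x_{1}^{2}+\alpha x_{2}^{2}+\beta x_{3}^{2}+\alpha \beta x_{4}^{2}-1$ on the open set $g_{t_{ij}}(x,x)\neq 0$. As $M_{t_{ij}}$ is already a smooth $3$-manifold (the gradient $\nabla f=(x_{4},-x_{3},-x_{2},x_{1})$ vanishes only at the excluded origin), it suffices to show that $N$ is submersive along $M_{t_{ij}}^{\ast}$, equivalently that $\nabla f$ and $\nabla h=2(x_{1},\alpha x_{2},\beta x_{3},\alpha \beta x_{4})$ are linearly independent at every point of $M_{t_{ij}}^{\ast}$. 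Then $1$ is a regular value, $M_{t_{ij}}^{\ast}$ is an embedded submanifold of dimension $3-1=2$, and being a closed subgroup of the Lie group $M_{t_{ij}}$ it is a $2$-dimensional Lie subgroup (the restricted group operations are smooth, or one invokes the closed subgroup theorem directly).

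The main obstacle I anticipate is precisely the regularity check in the last step. Proportionality $\nabla h=\mu\nabla f$ forces a rigid system: equating the first and fourth components already gives $\mu^{2}=4\alpha \beta$ (or $x_{1}=x_{4}=0$), and one must verify that the resulting points either fail the unit-norm condition, violate $x_{1}x_{4}-x_{2}x_{3}=0$, or fall into the excluded null locus $g_{t_{ij}}(x,x)=0$. Because the signature of $g_{t_{ij}}$ depends on the signs of $\alpha$ and $\beta$, this is where the hypotheses on $\alpha ,\beta$ (or a nondegeneracy assumption such as $\alpha \beta \neq 0$) actually enter; everything else is formal and runs parallel to the $M_{t_{i}}^{\ast}$ case.
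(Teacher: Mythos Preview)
Your proposal is correct and considerably more detailed than what the paper offers. In the paper, Corollary 6 is stated without any proof at all; it is treated as an immediate consequence of Theorem 7, exactly in the spirit of Corollary 2, where the only justification given is the one sentence preceding it: ``$M_{t_{i}}^{\ast}$ is a group with the group operation of generalized bicomplex multiplication.'' The paper does not verify multiplicativity of the norm, does not check the regular-value condition, and does not discuss the dependence on the signs of $\alpha,\beta$.

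Your argument supplies precisely the steps the paper omits: you prove $N_{x\cdot y}=N_{x}N_{y}$ using commutativity and Proposition 1(iii), deduce that $M_{t_{ij}}^{\ast}=N^{-1}(1)$ is a subgroup, and then appeal to the regular value theorem (or the closed subgroup theorem) to obtain the $2$-dimensional submanifold structure. This is a genuinely more rigorous route. The paper's implicit approach buys brevity; your approach buys an actual proof, and your flagging of the linear-independence check for $\nabla f$ and $\nabla h$ as the place where nondegeneracy of $\alpha,\beta$ matters is a point the paper never addresses.
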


\begin{theorem}
The Lie algebra of Lie group $M_{t_{ij}}$ is $sp\left \{
X_{1},X_{2,}X_{3}\right \} $ such that left invariant vector fields $%
X_{1},X_{2,}X_{3}$ are given by$\ $%
\begin{eqnarray*}
X_{1} &=&\left( x_{1},x_{2},x_{3},x_{4}\right) , \\
X_{2} &=&\left( -\alpha x_{2},x_{1},-\alpha x_{4},x_{3}\right) , \\
X_{3} &=&\left( -\beta x_{3},-\beta x_{4},x_{1},x_{2}\right) .
\end{eqnarray*}
\end{theorem}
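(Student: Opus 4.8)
The plan is to follow the same route used to compute the Lie algebra of $M_{t_i}$, now adapted to the defining equation $x_1x_4-x_2x_3=0$ of $M_{t_{ij}}$. First I would take a smooth curve
\begin{equation*}
a(t)=a_1(t)1+a_2(t)i+a_3(t)j+a_4(t)ij
\end{equation*}
lying on $M_{t_{ij}}$ with $a(0)=1$, that is, $a_1(0)=1$ and $a_m(0)=0$ for $m=2,3,4$. Since the Lie algebra is the tangent space to the group at the identity, the task reduces to determining which tangent directions $a_m'(0)$ are admissible.

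The decisive step is to differentiate the constraint $a_1(t)a_4(t)-a_2(t)a_3(t)=0$ and evaluate at $t=0$. Because $a_1(0)$ is the only nonzero coordinate, every term except $a_1(0)a_4'(0)$ drops out and one is left with $a_4'(0)=0$. This is the only place where the particular hyperquadric enters, and it is the exact analogue of the identity $a_3'(0)=0$ obtained in the $M_{t_i}$ case: here the constraint annihilates the $ij$-direction rather than the $j$-direction. Hence the tangent space at the identity is spanned by $\partial/\partial a_1$, $\partial/\partial a_2$, $\partial/\partial a_3$, which are represented at the identity by the generalized bicomplex units $1$, $i$, $j$ respectively.

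Finally I would pass from these tangent vectors at the identity to the corresponding left-invariant vector fields by left translation, exactly as in the $M_{t_i}$ proof. For the left-invariant field $X_m$ determined by the unit $e_m$ at the identity, its value at a point $x=x_1 1+x_2 i+x_3 j+x_4 ij$ is $(X_m)_x=x\cdot e_m$; thus $(X_1)_x=x\cdot 1$, $(X_2)_x=x\cdot i$ and $(X_3)_x=x\cdot j$. Evaluating these three products with the generalized bicomplex product reproduces precisely the vector fields $X_1$, $X_2$, $X_3$ listed in the statement, so that the Lie algebra is $sp\{X_1,X_2,X_3\}$.

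I do not expect a serious obstacle here: the argument is a direct parallel of the already-established $M_{t_i}$ computation, and the single conceptual point is verifying that differentiating $x_1x_4-x_2x_3$ at the identity forces the $ij$-component of the tangent vector to vanish, leaving exactly the span of $1$, $i$, $j$. The remaining work is the routine evaluation of the two products $x\cdot i$ and $x\cdot j$.
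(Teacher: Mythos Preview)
Your proposal is correct and follows exactly the approach the paper uses for $M_{t_i}$: differentiate the defining constraint at the identity to see which tangent direction is killed (here $a_4'(0)=0$), then left-translate the surviving units $1,i,j$ to obtain $X_1,X_2,X_3$. The paper omits the proof for $M_{t_{ij}}$ precisely because it is this direct analogue, so your write-up is what is intended.
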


\begin{corollary}
The Lie algebra of Lie group $M_{t_{ij}}^{\ast }$ is $sp\left \{
X_{2,}X_{3}\right \} .$
\end{corollary}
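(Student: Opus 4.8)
The plan is to compute the Lie algebra of $M_{t_{ij}}^{\ast}$ as the tangent space at the identity $1=\left(1,0,0,0\right)$, following the method of the preceding theorem for $M_{t_{ij}}$ but now taking into account the extra unit-norm equation that defines $M_{t_{ij}}^{\ast}$ inside $M_{t_{ij}}$. Since the Lie algebra of $M_{t_{ij}}$ is already known to be $sp\left\{X_{1},X_{2},X_{3}\right\}$, the task reduces to deciding which of these three directions remain admissible once the norm is held fixed.

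First I would take a curve $a\left(t\right)=a_{1}\left(t\right)1+a_{2}\left(t\right)i+a_{3}\left(t\right)j+a_{4}\left(t\right)ij$ on $M_{t_{ij}}^{\ast}$ with $a\left(0\right)=1$, so that $a_{1}\left(0\right)=1$ and $a_{m}\left(0\right)=0$ for $m=2,3,4$. This curve satisfies both defining relations, namely $a_{1}\left(t\right)a_{4}\left(t\right)-a_{2}\left(t\right)a_{3}\left(t\right)=0$ and $a_{1}^{2}\left(t\right)+\alpha a_{2}^{2}\left(t\right)+\beta a_{3}^{2}\left(t\right)+\alpha\beta a_{4}^{2}\left(t\right)=1$. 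Differentiating the first relation and setting $t=0$ reproduces the condition $a_{4}^{\prime}\left(0\right)=0$ already derived for $M_{t_{ij}}$, so the velocity vector lies in the span of the $1,i,j$ directions.

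The genuinely new step is to differentiate the unit-norm relation. This yields $2a_{1}a_{1}^{\prime}+2\alpha a_{2}a_{2}^{\prime}+2\beta a_{3}a_{3}^{\prime}+2\alpha\beta a_{4}a_{4}^{\prime}=0$, and at $t=0$ every term except the first carries a factor $a_{m}\left(0\right)=0$, so the equation collapses to $a_{1}^{\prime}\left(0\right)=0$. Hence the tangent vector at the identity also has vanishing $1$-component, and is therefore of the form $\zeta=\zeta_{2}i+\zeta_{3}j$. Left-translating $\zeta$ by a generalized bicomplex number $x$, exactly as in the previous theorem, produces the two left invariant vector fields $X_{2}$ and $X_{3}$.

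Finally I would note that the two linear conditions $a_{4}^{\prime}\left(0\right)=0$ and $a_{1}^{\prime}\left(0\right)=0$ are clearly independent, so the tangent space at the identity is precisely two-dimensional, consistent with the corollary that $M_{t_{ij}}^{\ast}$ is a $2$-dimensional Lie subgroup. This identifies the Lie algebra as $sp\left\{X_{2},X_{3}\right\}$. I expect no serious obstacle; the only point demanding a little care is verifying that the norm constraint contributes exactly the one relation $a_{1}^{\prime}\left(0\right)=0$, with no condition on $\zeta_{2}$ or $\zeta_{3}$, which is immediate since all the other terms vanish at the identity.
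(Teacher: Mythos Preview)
Your argument is correct and is precisely the natural way to fill in this corollary: you repeat the tangent-space computation of the preceding theorem for $M_{t_{ij}}$ and impose the additional differentiated norm constraint $a_{1}^{\prime}(0)=0$, which removes $X_{1}$ and leaves $sp\{X_{2},X_{3}\}$. The paper itself states the corollary without proof, treating it as an immediate consequence of the theorem together with the definition of $M_{t_{ij}}^{\ast}$; your write-up simply makes explicit the one extra differentiation the paper leaves to the reader.
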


\section{Tensor Product Surfaces and Lie Groups}

In this section we define the tensor product surfaces on the hyperquadrics $%
M_{t_{i}},$\ $M_{t_{j}}$ and $M_{t_{ij}}.$\ By means of tensor product
surfaces, we determine some special subgroups of these Lie groups $%
M_{t_{i}}, $\ $M_{t_{j}}$ and $M_{t_{ij}}$ in $\mathbb{R}^{4}$ and $\mathbb{R%
}_{2}^{4}$.

\subsection{Tensor Product Surfaces on $M_{t_{i}}$ Hyperquadric and Some
Special Lie Subgroups}

In this subsection, we change the definition of tensor product as follows:

Let $\gamma :\mathbb{R\rightarrow R}_{k}^{2}$ $\left( +\text{ }-\alpha \beta
\right) $ and $\delta :\mathbb{R\rightarrow R}_{t}^{2}$ $\left( +\text{ }%
\alpha \right) $ be planar curves in Euclidean or Lorentzian space. Put $%
\gamma \left( t\right) =\left( \gamma _{1}\left( t\right) ,\gamma _{2}\left(
t\right) \right) $ and $\delta \left( s\right) =\left( \delta _{1}\left(
s\right) ,\delta _{2}\left( s\right) \right) .$ Let us define their tensor
product as%
\begin{equation*}
f=\gamma \otimes \delta :\mathbb{R}^{2}\rightarrow \mathbb{R}_{v}^{4}\text{
\  \ }\left( +\text{ }\alpha \text{ }-\beta \text{ }-\alpha \beta \right) ,
\end{equation*}%
\begin{equation}
f\left( t,s\right) =\left( \gamma _{1}\left( t\right) \delta _{1}\left(
s\right) ,\gamma _{1}\left( t\right) \delta _{2}\left( s\right) ,-\alpha
\gamma _{2}\left( t\right) \delta _{2}\left( s\right) ,\gamma _{2}\left(
t\right) \delta _{1}\left( s\right) \right) .
\end{equation}%
Tensor product surface given by (1) is a surface on $M_{t_{i}}$
hyperquadric. Tangent vector fields of $f\left( t,s\right) $ can be easily
computed as%
\begin{eqnarray}
\frac{\partial f}{\partial t} &=&\left( \gamma _{1}^{\prime }\left( t\right)
\delta _{1}\left( s\right) ,\gamma _{1}^{\prime }\left( t\right) \delta
_{2}\left( s\right) ,-\alpha \gamma _{2}^{\prime }\left( t\right) \delta
_{2}\left( s\right) ,\gamma _{2}^{\prime }\left( t\right) \delta _{1}\left(
s\right) \right)  \\
\frac{\partial f}{\partial s} &=&\left( \gamma _{1}\left( t\right) \delta
_{1}^{\prime }\left( s\right) ,\gamma _{1}\left( t\right) \delta
_{2}^{\prime }\left( s\right) ,-\alpha \gamma _{2}\left( t\right) \delta
_{2}^{\prime }\left( s\right) ,\gamma _{2}\left( t\right) \delta
_{1}^{\prime }\left( s\right) \right) .  \notag
\end{eqnarray}%
By using (2), we have
\begin{eqnarray*}
g_{11} &=&g\left( \frac{\partial f}{\partial t},\frac{\partial f}{\partial t}%
\right) =g_{1}(\gamma ^{\prime },\gamma ^{\prime })g_{2}\left( \delta
,\delta \right)  \\
g_{12} &=&g\left( \frac{\partial f}{\partial t},\frac{\partial f}{\partial s}%
\right) =g_{1}(\gamma ,\gamma ^{\prime })g_{2}\left( \delta ,\delta ^{\prime
}\right)  \\
g_{22} &=&g\left( \frac{\partial f}{\partial s},\frac{\partial f}{\partial s}%
\right) =g_{1}(\gamma ,\gamma )g_{2}\left( \delta ^{\prime },\delta ^{\prime
}\right) ,
\end{eqnarray*}%
where $g_{1}=dx_{1}^{2}-\alpha \beta dx_{2}^{2}$ and $g_{2}=dx_{1}^{2}+%
\alpha dx_{2}^{2}$ are the metrics of $\mathbb{R}_{k}^{2}$ and $\mathbb{R}%
_{t}^{2}$, respectively. Consequently, an orthonormal basis for the tangent
space of $f(t,s)$ is given by%
\begin{eqnarray*}
e_{1} &=&\frac{1}{\sqrt{\left \vert g_{11}\right \vert }}\frac{\partial f}{%
\partial t} \\
e_{2} &=&\frac{1}{\sqrt{\left \vert g_{11}\left(
g_{11}g_{22}-g_{12}^{2}\right) \right \vert }}\left( g_{11}\frac{\partial f}{%
\partial s}-g_{12}\frac{\partial f}{\partial t}\right)
\end{eqnarray*}

\begin{remark}
Tensor product surface given by (1) is a surface in $\mathbb{R}^{4}$ or $%
\mathbb{R}_{2}^{4}$ according to the case of $\alpha $ and $\beta .$ If we
take as $\alpha =\beta =1,$ we obtain a tensor product surface of a
Lorentzian plane curve and a Euclidean plane curve in $\mathbb{R}_{2}^{4}.$
If we take as $\alpha =1$, $\beta =-1,$ we obtain a tensor product surface
of two Euclidean plane curves in $\mathbb{R}^{4}.$ If we take as $\alpha =-1$%
, $\beta =1,$ we obtain a tensor product surface of a Euclidean plane curve
and a Lorentzian plane curve in $\mathbb{R}_{2}^{4}.$ If we take as $\alpha
=-1$, $\beta =-1$ we obtain a tensor product surface of two Lorentzian plane
curves in $\mathbb{R}_{2}^{4}.$
\end{remark}

Now we investigate Lie group structure of tensor product surfaces given by
the parametrization (1) in $\mathbb{R}^{4}$ or $\mathbb{R}_{2}^{4}$
according to above cases. Morever we obtain left invariant vector fields of
the tensor product surface that has the structure of Lie group.

\subsubsection{Case I: $\protect \alpha =\protect \beta =1$}

\begin{proposition}
Let $\gamma :\mathbb{R\rightarrow R}_{1}^{2}$ $\left( +\text{ }-\right) $ be
a hyperbolic spiral and $\delta :\mathbb{R\rightarrow R}^{2}$ $\left( +\text{
}+\right) $ be a spiral with the same parameter, i.e. $\gamma \left(
t\right) =e^{at}\left( \cosh t,\sinh t\right) $ and $\delta \left( t\right)
=e^{bt}\left( \cos t,\sin t\right) $ $\left( a,b\in \mathbb{R}\right) .$
Their tensor product is a one parameter subgroup of Lie group $M_{t_{i}}.$
\end{proposition}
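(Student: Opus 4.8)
The plan is to specialize the tensor product (1) to the diagonal $s=t$, since $\gamma$ and $\delta$ are taken with the same parameter, and then to show that the resulting curve $f(t):=f(t,t)$ is a homomorphism from $(\mathbb{R},+)$ into the Lie group $(M_{t_{i}},\cdot)$, i.e. that $f(0)=1$ and $f(t+s)=f(t)\cdot f(s)$. Writing $\gamma_{1}=e^{at}\cosh t$, $\gamma_{2}=e^{at}\sinh t$, $\delta_{1}=e^{bt}\cos t$, $\delta_{2}=e^{bt}\sin t$ and setting $\alpha=\beta=1$, formula (1) gives the generalized bicomplex number
\begin{equation*}
f(t)=\gamma_{1}\delta_{1}+\gamma_{1}\delta_{2}\,i-\gamma_{2}\delta_{2}\,j+\gamma_{2}\delta_{1}\,ij .
\end{equation*}
Before anything else I would confirm that $f(t)\in M_{t_{i}}$ for every $t$: the defining relation $x_{1}x_{3}+\alpha x_{2}x_{4}=0$ holds because $\gamma_{1}\delta_{1}(-\gamma_{2}\delta_{2})+\gamma_{1}\delta_{2}\,\gamma_{2}\delta_{1}=0$, and the norm is
\begin{equation*}
g_{t_{i}}(f,f)=f_{1}^{2}+f_{2}^{2}-f_{3}^{2}-f_{4}^{2}=e^{2(a+b)t}\left(\cosh^{2}t-\sinh^{2}t\right)=e^{2(a+b)t}\neq 0,
\end{equation*}
so $f$ never leaves the hyperquadric.

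The decisive step is to factor $f(t)$ into commuting one-parameter pieces. Regrouping the four terms and using $i^{2}=-\alpha=-1$, together with $(ij)(i)=i^{2}j=-j$, one finds
\begin{equation*}
f(t)=\left(\gamma_{1}+\gamma_{2}\,ij\right)\left(\delta_{1}+\delta_{2}\,i\right)=e^{(a+b)t}\left(\cosh t+ij\,\sinh t\right)\left(\cos t+i\,\sin t\right),
\end{equation*}
which I would verify by expanding the product with the generalized bicomplex rule at $\alpha=\beta=1$ and matching the four components listed above. Thus the hyperbolic-spiral factor $\gamma$ is carried by the unit $ij$ (with $(ij)^{2}=\alpha\beta=1$) and the spiral factor $\delta$ by the unit $i$ (with $i^{2}=-\alpha=-1$).

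Each of the three factors is itself a one-parameter subgroup: $t\mapsto e^{(a+b)t}$ is multiplicative; $t\mapsto \cos t+i\sin t$ satisfies $(\cos t+i\sin t)(\cos s+i\sin s)=\cos(t+s)+i\sin(t+s)$ because $i^{2}=-1$ (ordinary angle addition); and $t\mapsto \cosh t+ij\sinh t$ satisfies the analogous identity because $(ij)^{2}=1$ (hyperbolic angle addition). Since the algebra $C_{\alpha\beta}$ is commutative (evident from the symmetry of the product formula in $x\leftrightarrow y$) and associative, these three one-parameter homomorphisms multiply to another one, so
\begin{equation*}
f(t)\cdot f(s)=e^{(a+b)(t+s)}\left(\cosh(t+s)+ij\sinh(t+s)\right)\left(\cos(t+s)+i\sin(t+s)\right)=f(t+s),
\end{equation*}
together with $f(0)=1$. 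This exhibits $f$ as a one-parameter subgroup of $M_{t_{i}}$.

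The one genuinely non-routine step is discovering the factorization; everything after it is forced by the commutativity of $C_{\alpha\beta}$ and the two angle-addition identities. I would therefore concentrate the effort there, checking carefully that the regrouping $\gamma_{1}\delta_{1}+\gamma_{1}\delta_{2}\,i-\gamma_{2}\delta_{2}\,j+\gamma_{2}\delta_{1}\,ij=(\gamma_{1}+\gamma_{2}\,ij)(\delta_{1}+\delta_{2}\,i)$ is correct, since it is precisely in the $j$ and $ij$ components that the relations $i^{2}=-1$ and $(ij)^{2}=1$ interact, and a sign slip there would break the whole argument.
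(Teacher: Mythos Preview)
Your argument is correct and in fact gives more than the paper's own proof, which simply writes out
\[
\varphi(t)=e^{(a+b)t}\bigl(\cosh t\cos t,\ \cosh t\sin t,\ -\sinh t\sin t,\ \sinh t\cos t\bigr)
\]
and then asserts without further detail that $\varphi(t_{1})\cdot\varphi(t_{2})=\varphi(t_{1}+t_{2})$ and $\varphi^{-1}(t)=\varphi(-t)$. Your route is genuinely different: instead of expanding the generalized bicomplex product of $\varphi(t_{1})$ and $\varphi(t_{2})$ component by component, you observe the factorization
\[
f(t)=e^{(a+b)t}\bigl(\cosh t+ij\,\sinh t\bigr)\bigl(\cos t+i\,\sin t\bigr)
\]
and then use that $C_{\alpha\beta}$ is commutative together with the elliptic addition law for $i$ (since $i^{2}=-1$) and the hyperbolic addition law for $ij$ (since $(ij)^{2}=1$). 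This is more conceptual: it identifies the curve as the product of three commuting one-parameter subgroups and explains \emph{why} the homomorphism property holds, rather than verifying it by brute expansion. It also generalizes immediately to the other cases in the paper by replacing the relevant unit. Conversely, the paper's approach has the virtue of being entirely mechanical and not relying on spotting the factorization. Your additional check that $f(t)\in M_{t_{i}}$ (both the quadric relation and the nonvanishing norm) is something the paper leaves implicit, so including it strengthens the argument.
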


\begin{proof}
We obtain
\begin{equation*}
\varphi \left( t\right) =\gamma \left( t\right) \otimes \delta \left(
t\right) =e^{(a+b)t}\left( \cosh t\cos t,\cosh t\sin t,-\sinh t\sin t,\sinh
t\cos t\right)
\end{equation*}%
It can be easily seen that
\begin{equation*}
\varphi \left( t_{1}\right) \cdot \varphi \left( t_{2}\right) =\varphi
\left( t_{1}+t_{2}\right)
\end{equation*}%
for all $t_{1},t_{2}.$ Also $\varphi ^{-1}\left( t\right) =\varphi \left(
-t\right) .$ Hence $\left( \varphi \left( t\right) ,\cdot \right) $ is a one
parameter Lie subgroup of Lie group $\left( M_{t_{i}},\cdot \right) .$
\end{proof}

\begin{corollary}
Let $\gamma :\mathbb{R\rightarrow R}_{1}^{2}$ be a Lorentzian circle
centered at O and $\delta :\mathbb{R\rightarrow R}^{2}$ be circle centered
at O with the same parameter, i.e., $\gamma \left( t\right) =\left( \cosh
t,\sinh t\right) $ and $\delta \left( t\right) =\left( \cos t,\sin t\right)
. $ Then their tensor product is a one parameter subgroup of Lie group $%
M_{t_{i}}^{\ast }.$

\begin{proof}
Since $g_{t_{i}}\left( \gamma \left( t\right) \otimes \delta \left( t\right)
,\gamma \left( t\right) \otimes \delta \left( t\right) \right) =1,$ it
follows that $\gamma \left( t\right) \otimes \delta \left( t\right) \subset
M_{t_{i}}^{\ast }.$ If we take as $a=b=0$ in Theorem (), we obtain that $%
\gamma $ is a Lorentzian cirle centered at O and $\delta $ is a circle
centered at O. Then their tensor product is a one parameter Lie subgroup in
Lie group $M_{t_{i}}^{\ast }.$
\end{proof}
\end{corollary}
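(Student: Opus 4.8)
The plan is to obtain this Corollary as the $a=b=0$ specialization of the preceding Proposition, supplemented by a single computation confirming that the resulting curve lands on the unit hyperquadric $M_{t_i}^{\ast}$. First I would substitute $a=b=0$ into the curves of the Proposition, so that $\gamma(t)=(\cosh t,\sinh t)$ and $\delta(t)=(\cos t,\sin t)$, and write out their tensor product from formula (1) with $\alpha=1$:
\[
\varphi(t)=\gamma(t)\otimes\delta(t)=(\cosh t\cos t,\ \cosh t\sin t,\ -\sinh t\sin t,\ \sinh t\cos t).
\]
Because these are exactly the curves of the Proposition at $a=b=0$, that result already supplies the group-theoretic content: $\varphi(t_1)\cdot\varphi(t_2)=\varphi(t_1+t_2)$ for all $t_1,t_2$, together with $\varphi(0)=1$ and $\varphi^{-1}(t)=\varphi(-t)$. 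Thus $\varphi$ is a one-parameter subgroup of $(M_{t_i},\cdot)$ with no further work.

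The remaining point is to upgrade membership from $M_{t_i}$ to the unit hyperquadric $M_{t_i}^{\ast}$. For this I would evaluate the $t_i$-norm of $\varphi(t)$ using $g_{t_i}(x,x)=x_1^2+\alpha x_2^2-\beta x_3^2-\alpha\beta x_4^2$ at $\alpha=\beta=1$. Grouping the squares gives $\cosh^2 t\,(\cos^2 t+\sin^2 t)-\sinh^2 t\,(\sin^2 t+\cos^2 t)=\cosh^2 t-\sinh^2 t=1$, so $g_{t_i}(\varphi(t),\varphi(t))=1$ identically and hence $\varphi(t)\in M_{t_i}^{\ast}$ for every $t$. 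Since $M_{t_i}^{\ast}$ is itself a Lie subgroup of $M_{t_i}$ (the earlier corollary), a one-parameter subgroup of $M_{t_i}$ whose image lies entirely in $M_{t_i}^{\ast}$ is automatically a one-parameter subgroup of $M_{t_i}^{\ast}$, which is the claim.

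Conceptually, the reason both facts hold simultaneously is that $\varphi$ factors as a genuine exponential. Writing $e^{ti}=\cos t+i\sin t$ (using $i^2=-1$) and $e^{t(ij)}=\cosh t+ij\sinh t$ (using $(ij)^2=\alpha\beta=1$), a direct multiplication in $C_{\alpha\beta}$ shows $\varphi(t)=e^{ti}\cdot e^{t(ij)}=e^{t(i+ij)}$. Since the generalized bicomplex algebra is commutative, the homomorphism property $\varphi(t_1)\cdot\varphi(t_2)=\varphi(t_1+t_2)$ is then nothing but additivity of exponents, and the value $1$ of the norm reflects that each factor carries unit $t_i$-norm. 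I would use this factorization only as motivation, however, and lean on the Proposition for the formal argument.

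The main obstacle is bookkeeping rather than anything conceptual: one must confirm that the defining relation $x_1x_3+\alpha x_2x_4=0$ of $M_{t_i}$ actually holds along $\varphi$, so that the $j$-component of $\varphi\cdot\varphi^{t_i}$ vanishes and $N_{\varphi}$ reduces to the real number $g_{t_i}(\varphi,\varphi)$, and then that this real number is the constant $1$. Both reduce to the identities $\cos^2+\sin^2=1$ and $\cosh^2-\sinh^2=1$, so no genuine difficulty arises.
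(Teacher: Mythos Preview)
Your proposal is correct and follows essentially the same approach as the paper: specialize the preceding Proposition to $a=b=0$ to inherit the one-parameter subgroup property, and verify directly that $g_{t_i}(\varphi(t),\varphi(t))=1$ so the image lies in $M_{t_i}^{\ast}$. You supply more explicit detail (the norm computation, the exponential factorization as motivation, and the check of the defining relation $x_1x_3+\alpha x_2x_4=0$) than the paper, but the logical structure is identical.
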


\begin{proposition}
Let $\varphi \left( t\right) $ be tensor product of a Lorentzian cirle
centered  at O and a circle centered at O with the same parameter. Then the
left invariant vector field on $\varphi \left( t\right) $ is $X=X_{2}+X_{4},$
where $X_{2}$ and $X_{4}$ are left invariant vector fields on $%
M_{t_{i}}^{\ast }.$
\end{proposition}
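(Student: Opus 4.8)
The plan is to exploit the fact, established in the preceding Corollary, that for $a=b=0$ the curve
\[
\varphi \left( t\right) =\gamma \left( t\right) \otimes \delta \left( t\right) =\left( \cosh t\cos t,\ \cosh t\sin t,\ -\sinh t\sin t,\ \sinh t\cos t\right)
\]
is a one parameter subgroup of $M_{t_{i}}^{\ast }$ with $\varphi \left( 0\right) =\left( 1,0,0,0\right) =1$, the group identity. Since the left invariant vector field that generates a one parameter subgroup is completely determined by its value at the identity, the first step is to compute the initial velocity $\zeta :=\varphi ^{\prime }\left( 0\right) $ and then to read off which combination of the basis fields $X_{1},X_{2},X_{4}$ of the Lie algebra of $M_{t_{i}}$ it represents.

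Differentiating $\varphi $ componentwise and substituting $t=0$ gives $\varphi ^{\prime }\left( 0\right) =\left( 0,1,0,1\right) $, that is, the generalized bicomplex number $\zeta =i+ij$. The conceptual input is standard: because $\varphi \left( t_{1}+t_{2}\right) =\varphi \left( t_{1}\right) \cdot \varphi \left( t_{2}\right) $, differentiating in $t_{2}$ at $t_{2}=0$ and using the notation for the differential $L_{x}^{\ast }$ of left translation introduced in the Lie algebra theorem yields $\varphi ^{\prime }\left( t\right) =L_{\varphi \left( t\right) }^{\ast }\zeta =\varphi \left( t\right) \cdot \zeta $. Hence the velocity field of $\varphi $ is exactly the left invariant field $X$ whose value at the identity equals $\zeta $.

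It then remains to identify $X$ in the basis. In the Lie algebra theorem for $M_{t_{i}}$ the fields were defined by $\left( X_{2}\right) _{x}=x\cdot i$ and $\left( X_{4}\right) _{x}=x\cdot ij$, so by linearity of the generalized bicomplex product the left invariant field with identity value $i+ij$ is $X=X_{2}+X_{4}$. As a final sanity check I would specialize to $\alpha =\beta =1$ (the case at hand) and verify directly that $\left( X_{2}+X_{4}\right) _{\varphi \left( t\right) }$ agrees componentwise with $\varphi ^{\prime }\left( t\right) $; this reduces to a short identity among products of $\cosh ,\sinh ,\cos ,\sin $ and is the only mechanical part of the argument. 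I do not anticipate a genuine obstacle here, since the entire content is the observation that a one parameter subgroup is the integral curve of the left invariant field fixed by its tangent at the identity, and the remaining work is the bookkeeping of matching $\varphi ^{\prime }\left( 0\right) =i+ij$ against the known basis $\left \{ X_{1},X_{2},X_{4}\right \} $.
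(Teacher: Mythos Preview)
Your proposal is correct and follows essentially the same approach as the paper: compute $\varphi(0)=e$ and $\varphi'(0)=(0,1,0,1)=i+ij$, then apply left translation $L_g^{\ast}(X_e)=g\cdot(i+ij)=X_2+X_4$. Your additional justification via the one-parameter subgroup property and the suggested sanity check are more thorough than the paper's terse version, but the underlying argument is the same.
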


\begin{proof}
Since $\varphi \left( t\right) $ is tensor product of a Lorentzian cirle
centered at O and a circle centered at O with the same parameter we write%
\begin{equation*}
\varphi \left( t\right) =\left( \cosh t\cos t,\cosh t\sin t,-\sinh t\sin
t,\sinh t\cos t\right)
\end{equation*}%
$\varphi \left( 0\right) =(1,0,0,0)=e$ and $\varphi ^{\prime }\left(
0\right) =\left( 0,1,0,1\right) =X_{e}.$ Then
\begin{eqnarray*}
L_{g}^{\ast }\left( X_{e}\right)  &=&g\cdot X_{e}=\left(
x_{1}1+x_{2}i+x_{3}j+x_{4}ij\right) \cdot \left( i+ij\right)  \\
&=&X_{2}+X_{4}
\end{eqnarray*}%
This completes the proof.
\end{proof}

\begin{proposition}
Let $\gamma :\mathbb{R\rightarrow R}_{1}^{2}$, $\gamma \left( t\right)
=e^{at}\left( \cosh t,\sinh t\right) $ be a hyperbolic spiral and $\delta :%
\mathbb{R\rightarrow R}^{2}$ $\delta \left( s\right) =e^{bs}\left( \cos
s,\sin s\right) $ be a spiral $\left( a,b\in \mathbb{R}\right) .$ Then their
tensor product is 2-dimensional Lie subgroup of $M_{t_{i}}.$

\begin{proof}
By using tensor product rule given by (1), we get%
\begin{equation*}
f\left( t,s\right) =e^{at+bs}\left( \cosh t\cos s,\cosh t\sin s,-\sinh t\sin
s,\sinh t\cos s\right)
\end{equation*}%
Every point of $f\left( t,s\right) $\ is on the hyperquadric $M_{t_{i}}.$
Since $f\left( t,s\right) $ is both subgroup and submanifold of a Lie group $%
M_{t_{i}},$ it is a 2-dimensional Lie subgroup of $M_{t_{i}}.$
\end{proof}
\end{proposition}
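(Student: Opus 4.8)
The plan is to exhibit $f$ as a group homomorphism from the additive group $(\mathbb{R}^2,+)$ into $(M_{t_i},\cdot)$ whose image is a regular $2$-dimensional surface. First I would specialize the tensor product rule (1) to $\alpha=\beta=1$, obtaining
\begin{equation*}
f(t,s) = e^{at+bs}\left( \cosh t\cos s,\; \cosh t\sin s,\; -\sinh t\sin s,\; \sinh t\cos s\right),
\end{equation*}
and then expose the underlying bicomplex factorization. Setting $A(t)=\cosh t+\sinh t\,ij$ and $B(s)=\cos s+\sin s\,i$, a direct multiplication by the generalized bicomplex product shows $f(t,s)=e^{at+bs}\,A(t)\cdot B(s)$.

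The core step is the one-parameter subgroup behaviour of each factor. Since $(ij)^2=\alpha\beta=1$, the element $ij$ acts as a hyperbolic unit, so $A(t_1)\cdot A(t_2)=A(t_1+t_2)$ by the addition formulas for $\cosh$ and $\sinh$; since $i^2=-\alpha=-1$, the element $i$ acts as an imaginary unit, so $B(s_1)\cdot B(s_2)=B(s_1+s_2)$ by the addition formulas for $\cos$ and $\sin$. Because the generalized bicomplex product is commutative, the scalars combine as $e^{at_1+bs_1}e^{at_2+bs_2}=e^{a(t_1+t_2)+b(s_1+s_2)}$ and the $A$- and $B$-factors may be regrouped, yielding $f(t_1,s_1)\cdot f(t_2,s_2)=f(t_1+t_2,s_1+s_2)$. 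Hence the image is closed under the product, contains the identity $f(0,0)=1$, and is closed under inverses via $f(t,s)^{-1}=f(-t,-s)$, so it is a subgroup of $M_{t_i}$. That the image actually lies on $M_{t_i}$ is confirmed by substituting the four components into the defining relation $x_1x_3+\alpha x_2x_4=0$, where the two $\cosh t\sinh t$ terms cancel.

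It then remains to see that $f$ is an immersion with $2$-dimensional image. Rather than read this off the induced metric, I would compute the differential at the identity directly: from (2), $\left.\partial f/\partial t\right|_{(0,0)}=(a,0,0,1)$ and $\left.\partial f/\partial s\right|_{(0,0)}=(b,1,0,0)$, which are linearly independent. Since $f$ is a homomorphism, left translation by $f(t,s)$ carries these to a basis of the tangent space at every point, so $f$ has rank $2$ everywhere and is an immersion. I would deliberately avoid the determinant $g_{11}g_{22}-g_{12}^2$ as a rank test here, since one computes that it equals $e^{4(at+bs)}\left(a^2-b^2-1\right)$ and therefore degenerates precisely when $a^2-b^2=1$; in the pseudo-Euclidean setting this only signals a degenerate induced metric, not a drop in rank, so it is not a reliable criterion for immersivity.

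Being simultaneously a subgroup and a $2$-dimensional immersed submanifold of the Lie group $M_{t_i}$, the image is a $2$-dimensional Lie subgroup. The one genuinely delicate point, which I expect to be the main obstacle under a fully rigorous reading, is upgrading the injective immersion to an embedded (regular) submanifold; this is where one invokes that the image of a Lie-group homomorphism is an (initial) Lie subgroup, rather than relying solely on the surface-plus-subgroup argument used for the analogous earlier propositions.
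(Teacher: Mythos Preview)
Your argument is correct and follows the same overall skeleton as the paper --- write down $f(t,s)$, check it lands on $M_{t_i}$, and observe it is simultaneously a subgroup and a submanifold --- but you supply considerably more content than the paper does. The paper's proof simply asserts that $f(t,s)$ is a subgroup and a submanifold without any verification; your bicomplex factorization $f(t,s)=e^{at+bs}A(t)\cdot B(s)$ with $A(t)=\cosh t+\sinh t\,ij$ and $B(s)=\cos s+\sin s\,i$ is not in the paper and makes the homomorphism property $f(t_1,s_1)\cdot f(t_2,s_2)=f(t_1+t_2,s_1+s_2)$ transparent via commutativity and the addition laws. Likewise your immersion check via $\partial f/\partial t|_{(0,0)}=(a,0,0,1)$, $\partial f/\partial s|_{(0,0)}=(b,1,0,0)$ and left translation is absent from the paper, and your observation that $g_{11}g_{22}-g_{12}^2=e^{4(at+bs)}(a^2-b^2-1)$ can vanish (so the induced metric is not a reliable rank test in this pseudo-Riemannian setting) is a worthwhile caution the paper overlooks. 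In short, the route is the same but your version actually fills in the two steps the paper leaves as bare assertions.
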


\begin{proposition}
Let $\gamma :\mathbb{R\rightarrow R}_{1}^{2}$, $\gamma \left( t\right)
=\left( \cosh t,\sinh t\right) $ be a Lorentzian circle centered at O and $%
\delta :\mathbb{R\rightarrow R}^{2}$ $\delta \left( s\right) =\left( \cos
s,\sin s\right) $ be a circle centered at O $\left( a,b\in \mathbb{R}\right)
.$ Then their tensor product is 2-dimensional Lie subgroup of $%
M_{t_{i}}^{\ast }.$

\begin{proof}
In Proposition () taking $a=b=0$, we can see that the tensor product surface
$f\left( t,s\right) \subset M_{t_{i}}^{\ast }.$Hence, it is 2-dimensional
Lie subgroup of $M_{t_{i}}^{\ast }$.
\end{proof}
\end{proposition}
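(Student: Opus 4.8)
The plan is to derive this as the special case $a=b=0$ of the preceding Proposition, together with the single additional observation that setting the exponential parameters to zero forces the surface to land on the \emph{unit} hyperquadric $M_{t_{i}}^{\ast}$ rather than merely on $M_{t_{i}}$. First I would substitute $a=b=0$ into the parametrization obtained there; the factor $e^{at+bs}$ collapses to $1$ and I am left with
\[
f(t,s)=\left(\cosh t\cos s,\ \cosh t\sin s,\ -\sinh t\sin s,\ \sinh t\cos s\right),
\]
which is precisely the tensor product of the Lorentzian circle $\gamma(t)=(\cosh t,\sinh t)$ and the Euclidean circle $\delta(s)=(\cos s,\sin s)$ given by rule (1) with $\alpha=1$.

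The key step is to check the unit-norm condition that distinguishes $M_{t_{i}}^{\ast}$ from $M_{t_{i}}$, namely $x_{1}^{2}+\alpha x_{2}^{2}-\beta x_{3}^{2}-\alpha\beta x_{4}^{2}=1$, here with $\alpha=\beta=1$. Substituting the four components of $f(t,s)$ into $g_{t_{i}}(f,f)$ gives $\cosh^{2}t\cos^{2}s+\cosh^{2}t\sin^{2}s-\sinh^{2}t\sin^{2}s-\sinh^{2}t\cos^{2}s$. Grouping the $\cosh^{2}t$ and $\sinh^{2}t$ terms separately factors out $\cos^{2}s+\sin^{2}s=1$ from each, and what remains is $\cosh^{2}t-\sinh^{2}t=1$. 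Thus $g_{t_{i}}(f,f)=1$ for every $(t,s)$, so $f(t,s)\subset M_{t_{i}}^{\ast}$. (The defining relation $x_{1}x_{3}+\alpha x_{2}x_{4}=0$ of $M_{t_{i}}$ is inherited from the preceding Proposition, and in any case vanishes by the same $s$-cancellation.)

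Finally I would invoke the preceding Proposition, which already establishes that $f$ is closed under the generalized bicomplex product and is a $2$-dimensional submanifold, hence a $2$-dimensional Lie subgroup of $M_{t_{i}}$. Since the image has now been shown to lie entirely inside $M_{t_{i}}^{\ast}$, and $M_{t_{i}}^{\ast}$ is itself a Lie subgroup of $M_{t_{i}}$ by the earlier Corollary, the very same group-closure and submanifold properties exhibit $f(t,s)$ as a $2$-dimensional Lie subgroup of $M_{t_{i}}^{\ast}$. I do not expect a genuine obstacle here: the entire content beyond the previous Proposition is the norm evaluation, which collapses immediately under the identities $\cosh^{2}-\sinh^{2}=1$ and $\cos^{2}+\sin^{2}=1$.
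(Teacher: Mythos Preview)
Your proof is correct and follows the same approach as the paper: specialize the preceding Proposition to $a=b=0$ and observe that the resulting surface sits inside $M_{t_{i}}^{\ast}$. The only difference is that you spell out the norm computation $g_{t_{i}}(f,f)=1$ explicitly, whereas the paper simply asserts the containment $f(t,s)\subset M_{t_{i}}^{\ast}$ without showing the calculation.
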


\begin{proposition}
Let $\gamma :\mathbb{R\rightarrow R}_{1}^{2}$, $\gamma \left( t\right)
=\left( \cosh t,\sinh t\right) $ be a Lorentzian circle at centered O and $%
\delta :\mathbb{R\rightarrow R}^{2}$ $\delta \left( s\right) =\left( \cos
s,\sin s\right) $ be a circle at centered O $\left( a,b\in \mathbb{R}\right)
.$ Then, the left invariant vector fields on tensor product surface $f\left(
t,s\right) =\gamma \left( t\right) \otimes \delta \left( s\right) $ are $%
X_{2}$ and $X_{4}$ which are the left invariant vector fields on $%
M_{t_{i}}^{\ast }.$
\end{proposition}

\begin{proof}
The unit element of 2-dimensional Lie subgroup is the point $e=f(0,0).$ Let
us find the left invariant vector fields on $f(t,s)$ to the vectors%
\begin{equation*}
u_{1}=\left. \frac{\partial }{\partial t}\right \vert _{e}\text{ \  \ and \  \
}u_{2}=\left. \frac{\partial }{\partial s}\right \vert _{e}
\end{equation*}%
for the vector $u_{1}$ we obtain

\begin{eqnarray*}
L_{g}^{\ast }\left( u_{1}\right) &=&g\cdot u_{1}=\left(
x_{1}1+x_{2}i+x_{3}j+x_{4}ij\right) \cdot ij \\
&=&X_{4}
\end{eqnarray*}%
Analogously, for the vector $u_{2}$ we obtain left invariant vector field $%
X_{2}.$
\end{proof}

\subsubsection{Case II\textbf{:} $\protect \alpha =1,\protect \beta =-1$}

\begin{proposition}
Let $\gamma :\mathbb{R\rightarrow R}^{2}$ $\left( +\text{ }+\right) $ and $%
\delta :\mathbb{R\rightarrow R}^{2}$ $\left( +\text{ }+\right) $ be two
spirals with the same parameter, i.e. $\gamma \left( t\right) =e^{at}\left(
\cos t,\sin t\right) $ and $\delta \left( t\right) =e^{bt}\left( \cos t,\sin
t\right) $ $\left( a,b\in \mathbb{R}\right) .$ Then their tensor product is
a one parameter subgroup of Lie group $M_{t_{i}}.$
\end{proposition}

\begin{proof}
We obtain
\begin{equation*}
\varphi \left( t\right) =\gamma \left( t\right) \otimes \delta \left(
t\right) =e^{(a+b)t}\left( \cos ^{2}t,\cos t,\sin t,-\sin ^{2}t,\cos t,\sin
t\right)
\end{equation*}%
It can be easily seen that
\begin{equation*}
\varphi \left( t_{1}\right) \cdot \varphi \left( t_{2}\right) =\varphi
\left( t_{1}+t_{2}\right)
\end{equation*}%
for all $t_{1},t_{2}.$ Also $\varphi ^{-1}\left( t\right) =\varphi \left(
-t\right) .$ Hence $\left( \varphi \left( t\right) ,\cdot \right) $ is a one
parameter Lie subgroup of Lie group $\left( M_{t_{i}},\cdot \right) .$
\end{proof}

\begin{corollary}
Let $\gamma :\mathbb{R\rightarrow R}^{2}$ and $\delta :\mathbb{R\rightarrow R%
}^{2}$ be two circles centered at O with the same parameter, i.e., $\gamma
\left( t\right) =\left( \cos t,\sin t\right) $ and $\delta \left( t\right)
=\left( \cos t,\sin t\right) .$ Then their tensor product is a one parameter
subgroup of Lie group $M_{t_{i}}^{\ast }.$
\end{corollary}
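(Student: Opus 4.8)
The plan is to read this corollary as the specialization $a=b=0$ of the Proposition immediately preceding it, with one extra verification that forces the image into the unit hyperquadric $M_{t_i}^{\ast}$ rather than merely into $M_{t_i}$. Setting $a=b=0$ in the tensor product formula (1) (and recalling that here $\alpha=1$, $\beta=-1$), I would first write down
\[
\varphi(t)=\gamma(t)\otimes\delta(t)=\left(\cos^2 t,\ \cos t\sin t,\ -\sin^2 t,\ \sin t\cos t\right).
\]
Because this is exactly the curve the preceding Proposition treats when $a=b=0$, the one-parameter subgroup identities $\varphi(t_1)\cdot\varphi(t_2)=\varphi(t_1+t_2)$ and $\varphi^{-1}(t)=\varphi(-t)$ are already in hand; if one prefers a self-contained check, these follow by substituting the components of $\varphi(t_1)$ and $\varphi(t_2)$ into the generalized bicomplex product and simplifying with the angle-addition formulas.

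The only genuinely new step is to promote membership from $M_{t_i}$ to $M_{t_i}^{\ast}$. For this I would evaluate the metric $g_{t_i}(x,x)=x_1^2+\alpha x_2^2-\beta x_3^2-\alpha\beta x_4^2$, which under $\alpha=1$, $\beta=-1$ collapses to $x_1^2+x_2^2+x_3^2+x_4^2$. Substituting $\varphi(t)$ gives
\[
g_{t_i}(\varphi,\varphi)=\cos^4 t+2\cos^2 t\sin^2 t+\sin^4 t=\left(\cos^2 t+\sin^2 t\right)^2=1,
\]
so $\varphi(t)\subset M_{t_i}^{\ast}$ for every $t$.

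Combining the two observations finishes the argument: the image $\varphi(\mathbb{R})$ is closed under the generalized bicomplex product and under inversion, and it lies entirely inside the subgroup $M_{t_i}^{\ast}$, hence it is a one-parameter subgroup of $M_{t_i}^{\ast}$. I do not expect any real obstacle here, since the group law was already established in the Proposition and this statement is a corollary precisely for that reason; the only point needing care is the sign bookkeeping for $-\beta$ and $-\alpha\beta$ in $g_{t_i}$ under the Case II choice $\beta=-1$, since a sign slip there would spoil the clean reduction of the norm to $1$.
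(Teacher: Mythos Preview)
Your proposal is correct and follows essentially the same approach the paper uses: the paper itself gives no explicit proof for this Case~II corollary, but the analogous Case~I corollary is proved by exactly your two steps---verifying $g_{t_i}(\varphi,\varphi)=1$ to land in $M_{t_i}^{\ast}$, and specializing $a=b=0$ in the preceding Proposition to inherit the one-parameter subgroup structure. Your sign bookkeeping for $\alpha=1$, $\beta=-1$ in both the tensor product formula and the metric $g_{t_i}$ is accurate.
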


\begin{proposition}
Let $\varphi \left( t\right) $ be tensor product of two circles centered at
O with the same parameter. Then the left invariant vector field on $\varphi
\left( t\right) $ is $X=X_{2}+X_{4},$ where $X_{2}$ and $X_{4}$ are left
invariant vector fields on $M_{t_{i}}^{\ast }.$
\end{proposition}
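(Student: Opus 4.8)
The plan is to follow exactly the pattern of the corresponding Proposition in Case~I, since the computation is structurally identical: write the curve explicitly, locate the group identity on it, read off the velocity vector at the identity, and then left-translate that vector over the group.

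First I would substitute $\gamma(t)=(\cos t,\sin t)$ and $\delta(t)=(\cos t,\sin t)$, sharing the common parameter, into the tensor product rule (1) with $\alpha=1$. This yields $\varphi(t)=\left(\cos^{2}t,\cos t\sin t,-\sin^{2}t,\sin t\cos t\right)$. Setting $t=0$ gives $\varphi(0)=(1,0,0,0)=e$, so the curve passes through the identity of $M_{t_{i}}^{\ast}$, consistent with the preceding Corollary that places the whole curve in $M_{t_{i}}^{\ast}$.

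Next I would differentiate $\varphi$ and evaluate at $t=0$. The velocity has components $\left(-\sin 2t,\cos 2t,-\sin 2t,\cos 2t\right)$, so $\varphi^{\prime}(0)=(0,1,0,1)=X_{e}$. Under the identification of $\mathbb{R}_{v}^{4}$ with $C_{\alpha\beta}$ this tangent vector is precisely the generalized bicomplex number $i+ij$.

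Finally I would apply the differential of left translation, $L_{g}^{\ast}(X_{e})=g\cdot X_{e}=g\cdot(i+ij)$, and invoke the earlier Theorem computing the Lie algebra of $M_{t_{i}}$, in which $\left(X_{2}\right)_{x}=x\cdot i$ and $\left(X_{4}\right)_{x}=x\cdot ij$. By distributivity of the generalized bicomplex product this gives $L_{g}^{\ast}(X_{e})=X_{2}+X_{4}$; since $X_{2}$ and $X_{4}$ span the Lie algebra of $M_{t_{i}}^{\ast}$ by the relevant Corollary, the left invariant vector field along $\varphi$ is $X=X_{2}+X_{4}$. I anticipate no genuine obstacle here: the only points deserving care are reading $\varphi^{\prime}(0)$ correctly as the bicomplex combination $i+ij$ and matching $g\cdot i$ with $X_{2}$ and $g\cdot ij$ with $X_{4}$; everything else is the same linearity-of-left-translation argument already used in the Lorentzian-circle case.
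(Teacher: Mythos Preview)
Your proposal is correct and follows exactly the template the paper uses for the analogous Case~I proposition: write $\varphi(t)$ explicitly, note $\varphi(0)=e$, compute $\varphi'(0)=(0,1,0,1)=i+ij$, and left-translate to obtain $g\cdot(i+ij)=X_{2}+X_{4}$. The paper states this Case~II proposition without its own proof, relying on the identical argument already displayed in Case~I, so your write-up is precisely what is intended.
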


\begin{proposition}
Let $\gamma :\mathbb{R\rightarrow R}^{2}$, $\gamma \left( t\right)
=e^{at}\left( \cos t,\sin t\right) $ and $\delta :\mathbb{R\rightarrow R}%
^{2} $ $\delta \left( s\right) =e^{bs}\left( \cos s,\sin s\right) $ be two
spirals $\left( a,b\in \mathbb{R}\right) .$ Then their tensor product is
2-dimensional Lie subgroup of $M_{t_{i}}.$

\begin{proof}
By using tensor product rule given by (1), we get%
\begin{equation*}
f\left( t,s\right) =e^{at+bs}\left( \cos t\cos s,\cos t\sin s,-\sin t\sin
s,\sin t\cos s\right)
\end{equation*}%
Every point of $f\left( t,s\right) $\ is on the hyperquadric $M_{t_{i}}.$
Since $f\left( t,s\right) $ is both subgroup and submanifold of a Lie group $%
M_{t_{i}},$ it is a 2-dimensional Lie subgroup of $M_{t_{i}}.$
\end{proof}
\end{proposition}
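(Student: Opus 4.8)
The plan is to follow the template already used for the two-dimensional propositions in Case I: exhibit the tensor product explicitly, verify that it lies on the hyperquadric, show that it is closed under the generalized bicomplex product and under inversion (so that it is a subgroup), confirm that the parametrization is regular (so that it is a two-dimensional submanifold), and then invoke the standing principle that a subset of a Lie group which is at once a subgroup and a submanifold is a Lie subgroup. First I would feed $\gamma(t)=e^{at}(\cos t,\sin t)$ and $\delta(s)=e^{bs}(\cos s,\sin s)$ into the tensor product rule (1) with $\alpha=1,\ \beta=-1$ to obtain
\begin{equation*}
f(t,s)=e^{at+bs}\left(\cos t\cos s,\ \cos t\sin s,\ -\sin t\sin s,\ \sin t\cos s\right),
\end{equation*}
and then check membership in $M_{t_i}$: the defining relation $x_1x_3+\alpha x_2x_4=0$ collapses, after substitution, to $-\cos t\sin t\cos s\sin s+\cos t\sin t\cos s\sin s=0$, while $g_{t_i}(f,f)=e^{2(at+bs)}\neq 0$, so every point genuinely sits on the hyperquadric.

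The heart of the argument is the homomorphism property, and the cleanest way to secure it is by factoring. Setting
\begin{equation*}
P(t)=e^{at}\left(\cos t+\sin t\,ij\right),\qquad R(s)=e^{bs}\left(\cos s+\sin s\,i\right),
\end{equation*}
a short multiplication with the product $\cdot$ shows $f(t,s)=P(t)\cdot R(s)$. In Case II one has $(ij)^2=\alpha\beta=-1$ and $i^2=-\alpha=-1$, so the subalgebras $\mathrm{span}\{1,ij\}$ and $\mathrm{span}\{1,i\}$ are each isomorphic to $\mathbb{C}$; consequently $P$ and $R$ are one-parameter subgroups, $P(t_1)\cdot P(t_2)=P(t_1+t_2)$ and $R(s_1)\cdot R(s_2)=R(s_1+s_2)$, with $P(-t)=P(t)^{-1}$ and likewise for $R$. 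Since the generalized bicomplex product is commutative, these facts assemble into
\begin{equation*}
f(t_1,s_1)\cdot f(t_2,s_2)=P(t_1)P(t_2)R(s_1)R(s_2)=f(t_1+t_2,\ s_1+s_2),
\end{equation*}
which, together with $f(t,s)^{-1}=f(-t,-s)$ and $f(0,0)=e$, shows that $\{f(t,s)\}$ is a subgroup of $M_{t_i}$. One could instead expand the four components of $f(t_1,s_1)\cdot f(t_2,s_2)$ directly and simplify with the angle-addition identities, but the factorization makes the computation transparent and explains why it works.

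To finish I would argue that the parametrization is regular, i.e.\ $\partial f/\partial t$ and $\partial f/\partial s$ are linearly independent (equivalently, the first fundamental form from the preliminaries is non-degenerate), so that the image is a two-dimensional submanifold; combined with the subgroup property this makes it a two-dimensional Lie subgroup of $M_{t_i}$. I expect the main obstacle to lie in this submanifold step rather than in the algebra: beyond verifying regularity one should strictly speaking rule out that the homomorphic image of $\mathbb{R}^2$ is only immersed rather than embedded. The paper disposes of this with its standing convention that a subset which is simultaneously a subgroup and a submanifold is a Lie subgroup, and I would adopt the same convention here.
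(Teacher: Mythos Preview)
Your argument is correct and follows the same skeleton as the paper's proof: compute $f(t,s)$ from rule~(1), check that it lies on $M_{t_i}$, and conclude by the principle that a subset which is simultaneously a subgroup and a submanifold of a Lie group is a Lie subgroup. Where you differ is in actually justifying the subgroup claim. The paper simply asserts that $f(t,s)$ is a subgroup, whereas you supply a proof via the factorization $f(t,s)=P(t)\cdot R(s)$ with $P(t)=e^{at}(\cos t+\sin t\,ij)$ and $R(s)=e^{bs}(\cos s+\sin s\,i)$, then use that in Case~II both $(ij)^2=-1$ and $i^2=-1$, so each factor is a one-parameter subgroup in a copy of $\mathbb{C}$, and commutativity gives $f(t_1,s_1)\cdot f(t_2,s_2)=f(t_1+t_2,s_1+s_2)$. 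This is a genuine addition: it explains \emph{why} the closure property holds rather than leaving it as an unverified computation, and it makes transparent the role of the signs of $\alpha$ and $\beta$. Your explicit check of the hyperquadric relation and of $g_{t_i}(f,f)\neq 0$, and your remark about regularity of the parametrization, likewise fill in steps the paper leaves implicit.
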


\begin{corollary}
Let $\gamma :\mathbb{R\rightarrow R}^{2}$, $\gamma \left( t\right) =\left(
\cos t,\sin t\right) $ and $\delta :\mathbb{R\rightarrow R}^{2}$ $\delta
\left( s\right) =\left( \cos s,\sin s\right) $ be two circles centered at O $%
\left( a,b\in \mathbb{R}\right) .$ Then their tensor product is
2-dimensional Lie subgroup of $M_{t_{i}}^{\ast }.$
\end{corollary}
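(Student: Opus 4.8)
The plan is to obtain this corollary as the degenerate special case of the immediately preceding proposition at the parameter values $a=b=0$. First I would set $a=b=0$ in the two spirals $\gamma(t)=e^{at}(\cos t,\sin t)$ and $\delta(s)=e^{bs}(\cos s,\sin s)$, so that each collapses to the unit circle centered at the origin, namely $\gamma(t)=(\cos t,\sin t)$ and $\delta(s)=(\cos s,\sin s)$. Feeding these into the tensor product surface already computed in that proposition, the prefactor $e^{at+bs}$ becomes $1$, and I obtain
\[
f(t,s)=\left(\cos t\cos s,\ \cos t\sin s,\ -\sin t\sin s,\ \sin t\cos s\right).
\]

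The key step is then to verify that every point of this surface lies on the \emph{unit} hyperquadric $M_{t_i}^{\ast}$, and not merely on $M_{t_i}$. Since we are in Case II with $\alpha=1$, $\beta=-1$, the metric $g_{t_i}=dx_1^2+\alpha\,dx_2^2-\beta\,dx_3^2-\alpha\beta\,dx_4^2$ reduces to the standard Euclidean form $dx_1^2+dx_2^2+dx_3^2+dx_4^2$. Evaluating $g_{t_i}(f,f)$ on the four components above and grouping the terms through $\cos^2 s+\sin^2 s=1$ collapses the expression to $\cos^2 t+\sin^2 t=1$, which establishes $f(t,s)\subset M_{t_i}^{\ast}$.

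Finally, I would invoke the fact already proved in the preceding proposition that $f(t,s)$ is simultaneously a subgroup and a submanifold of the Lie group $M_{t_i}$; combined with the containment $f(t,s)\subset M_{t_i}^{\ast}$ just checked, this yields that $f(t,s)$ is a $2$-dimensional Lie subgroup of $M_{t_i}^{\ast}$, exactly parallel to the analogous corollary established in Case I. I do not expect any genuine obstacle here: the whole argument is a one-line specialization followed by a short norm computation, and the only point requiring care is confirming that the particular sign pattern of $g_{t_i}$ under $\alpha=1$, $\beta=-1$ indeed makes the Euclidean norm of the tensor product equal to $1$.
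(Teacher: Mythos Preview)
Your proposal is correct and follows essentially the same approach as the paper: the paper's own argument (explicit in the analogous Case~I statement, and implicit here) is simply to set $a=b=0$ in the preceding proposition so that $f(t,s)\subset M_{t_i}^{\ast}$, and then conclude. Your explicit verification that $g_{t_i}(f,f)=1$ under $\alpha=1$, $\beta=-1$ is a faithful expansion of what the paper leaves to the reader.
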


\begin{proposition}
Let $\gamma :\mathbb{R\rightarrow R}_{1}^{2}$, $\gamma \left( t\right)
=\left( \cos t,\sin t\right) $ and $\delta :\mathbb{R\rightarrow R}^{2}$ $%
\delta \left( s\right) =\left( \cos s,\sin s\right) $ be two circles
centered at O $\left( a,b\in \mathbb{R}\right) .$ Then, the left invariant
vector fields on tensor product surface $f\left( t,s\right) =\gamma \left(
t\right) \otimes \delta \left( s\right) $ are $X_{2}$ and $X_{4}$ which are
the left invariant vector fields on $M_{t_{i}}^{\ast }.$
\end{proposition}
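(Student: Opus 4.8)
The plan is to follow exactly the scheme used for the analogous statement in Case~I, namely to realize the two coordinate tangent vectors at the identity of the surface as generalized bicomplex units and then left-translate them by the product $\cdot$. First I would specialize the parametrization: substituting $\alpha=1$ and $a=b=0$ into the tensor product surface of the preceding proposition of Case~II, the surface reduces to
\begin{equation*}
f(t,s)=\left( \cos t\cos s,\ \cos t\sin s,\ -\sin t\sin s,\ \sin t\cos s\right).
\end{equation*}
Its unit element is $e=f(0,0)=(1,0,0,0)$, which is precisely the identity $1$ of the Lie group $M_{t_{i}}^{\ast}$, so the surface passes through the group identity and the left translation argument applies.

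Next I would compute the two coordinate tangent vectors at $e$. Differentiating $f$ with respect to $t$ and to $s$ and evaluating at $(0,0)$ yields
\begin{equation*}
u_{1}=\left. \frac{\partial f}{\partial t}\right\vert _{e}=(0,0,0,1),\qquad u_{2}=\left. \frac{\partial f}{\partial s}\right\vert _{e}=(0,1,0,0).
\end{equation*}
Reading these back as generalized bicomplex numbers, $u_{1}$ corresponds to the unit $ij$ and $u_{2}$ to the unit $i$.

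Finally, I would left-translate each tangent vector by an arbitrary element $g=x_{1}1+x_{2}i+x_{3}j+x_{4}ij$, invoking the identities $\left( X_{2}\right)_{x}=x\cdot i$ and $\left( X_{4}\right)_{x}=x\cdot ij$ established in the Lie algebra theorem for $M_{t_{i}}$. This gives $L_{g}^{\ast}(u_{1})=g\cdot ij=X_{4}$ and $L_{g}^{\ast}(u_{2})=g\cdot i=X_{2}$, so the left invariant vector fields generated on $f(t,s)$ are exactly $X_{4}$ and $X_{2}$, which are the left invariant vector fields of $M_{t_{i}}^{\ast}$, as claimed.

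The computation is entirely routine and structurally identical to the Case~I version. The only point requiring care is the correct identification of the coordinate tangent vectors at the identity with the bicomplex units $i$ and $ij$, together with the observation that left-translating them through the generalized bicomplex product $\cdot$ reproduces precisely the fields $X_{2}$ and $X_{4}$ of the Lie algebra computation rather than some other combination; once these identifications are fixed, the conclusion is immediate.
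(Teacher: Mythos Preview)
Your proposal is correct and follows exactly the scheme the paper uses: the paper gives the explicit proof only in Case~I (compute the coordinate tangent vectors $u_{1},u_{2}$ at $e=f(0,0)$, identify them with the bicomplex units $ij$ and $i$, and left-translate by $g$ to obtain $X_{4}$ and $X_{2}$), and the present Case~II proposition is stated without proof as an analogous computation. Your identification of $u_{1}=(0,0,0,1)=ij$ and $u_{2}=(0,1,0,0)=i$ and the resulting left translations $g\cdot ij=X_{4}$, $g\cdot i=X_{2}$ match the paper's method verbatim.
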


\subsubsection{Case III: $\protect \alpha =-1,\protect \beta =-1$}

\begin{proposition}
Let $\gamma :\mathbb{R\rightarrow R}_{1}^{2}$ $\left( +\text{ }-\right) $
and $\delta :\mathbb{R\rightarrow R}_{1}^{2}$ $\left( +\text{ }-\right) $ be
two hyperbolic spirals with the same parameter, i.e. $\gamma \left( t\right)
=e^{at}\left( \cosh t,\sinh t\right) $ and $\delta \left( t\right)
=e^{bt}\left( \cosh t,\sinh t\right) $ $\left( a,b\in \mathbb{R}\right) .$
Then their tensor product is a one parameter subgroup of Lie group $%
M_{t_{i}}.$
\end{proposition}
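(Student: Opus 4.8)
The plan is to carry out, in the case $\alpha=\beta=-1$, the same scheme used for the one-parameter subgroups of Cases I and II: form the tensor product $\varphi(t)=\gamma(t)\otimes\delta(t)$ explicitly from (1), check that it lies on $M_{t_i}$ for every $t$, and then establish the homomorphism identity $\varphi(t_1)\cdot\varphi(t_2)=\varphi(t_1+t_2)$ together with $\varphi(-t)=\varphi^{-1}(t)$. First I would substitute $\gamma_1=e^{at}\cosh t$, $\gamma_2=e^{at}\sinh t$, $\delta_1=e^{bt}\cosh t$, $\delta_2=e^{bt}\sinh t$ and $\alpha=-1$ into (1) to obtain
\[
\varphi(t)=e^{(a+b)t}\left(\cosh^2 t,\ \cosh t\sinh t,\ \sinh^2 t,\ \sinh t\cosh t\right).
\]
Membership in $M_{t_i}$ is then immediate: the defining relation $x_1x_3+\alpha x_2x_4=x_1x_3-x_2x_4$ vanishes identically, and $g_{t_i}(\varphi,\varphi)=e^{2(a+b)t}(\cosh^2 t-\sinh^2 t)^2=e^{2(a+b)t}\neq 0$, so $\varphi(t)\in M_{t_i}$.

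For the group law I would avoid a brute-force four-coordinate expansion and instead use the factorization
\[
\varphi(t)=\bigl(\gamma_1(t)+\gamma_2(t)\,ij\bigr)\cdot\bigl(\delta_1(t)+\delta_2(t)\,i\bigr),
\]
which follows directly from the generalized bicomplex product and in fact holds for all $\alpha,\beta$ (this is the structural reason every surface (1) satisfies the $M_{t_i}$ relation). Because $\alpha=\beta=-1$ forces $i^2=(ij)^2=1$, each factor is a genuine exponential in a commutative one-dimensional subalgebra: $\gamma_1+\gamma_2\,ij=e^{at}(\cosh t+\sinh t\,ij)=e^{(a+ij)t}$ and $\delta_1+\delta_2\,i=e^{(b+i)t}$. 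Since $C_{\alpha\beta}$ is commutative, the two factors commute, and the exponential law gives
\[
\varphi(t_1)\cdot\varphi(t_2)=e^{(a+ij)(t_1+t_2)}\,e^{(b+i)(t_1+t_2)}=\varphi(t_1+t_2).
\]
Moreover $\varphi(0)=1$ is the identity and $\varphi(-t)=\varphi^{-1}(t)$, while smoothness of $t\mapsto\varphi(t)$ is clear; hence $\varphi$ is a one-parameter subgroup of $(M_{t_i},\cdot)$.

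The only genuine obstacle is verifying the factorization, a short but essential bookkeeping step: one must confirm that multiplying $\gamma_1+\gamma_2\,ij$ by $\delta_1+\delta_2\,i$ under the generalized product reproduces exactly the four coordinates $(\gamma_1\delta_1,\gamma_1\delta_2,-\alpha\gamma_2\delta_2,\gamma_2\delta_1)$ of (1). Should the exponential shortcut seem out of keeping with the paper's direct style, the identity $\varphi(t_1)\cdot\varphi(t_2)=\varphi(t_1+t_2)$ can instead be checked by expanding the product and regrouping with the hyperbolic addition formulas $\cosh(t_1+t_2)=\cosh t_1\cosh t_2+\sinh t_1\sinh t_2$ and $\sinh(t_1+t_2)=\sinh t_1\cosh t_2+\cosh t_1\sinh t_2$; this is routine and yields the same conclusion.
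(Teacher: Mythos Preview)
Your argument is correct. The explicit formula for $\varphi(t)$, the verification that $\varphi(t)\in M_{t_i}$, and the exponential factorization $\varphi(t)=e^{(a+ij)t}e^{(b+i)t}$ all check out; in particular your key identity $(\gamma_1+\gamma_2\,ij)\cdot(\delta_1+\delta_2\,i)=\gamma_1\delta_1+\gamma_1\delta_2\,i-\alpha\gamma_2\delta_2\,j+\gamma_2\delta_1\,ij$ is exactly the parametrization (1), and with $\alpha=\beta=-1$ one has $i^2=(ij)^2=1$, so the hyperbolic exponentials are legitimate.

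The paper's own proof is much terser: it writes down the same $\varphi(t)$ and then simply asserts that $\varphi(t_1)\cdot\varphi(t_2)=\varphi(t_1+t_2)$ ``can be easily seen'' and that $\varphi^{-1}(t)=\varphi(-t)$, with no further detail. Your exponential route is a genuinely different and more structural argument: it explains \emph{why} the homomorphism identity holds (commutativity of $C_{\alpha\beta}$ plus the exponential law) and in fact works uniformly for all the one-parameter subgroups in the paper, not just this case. The paper's approach, by contrast, is a bare coordinate verification deferred to the reader; it is shorter on the page but gives no conceptual leverage. Your fallback of expanding with hyperbolic addition formulas is precisely what the paper implicitly has in mind.
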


\begin{proof}
We obtain
\begin{equation*}
\varphi \left( t\right) =\gamma \left( t\right) \otimes \delta \left(
t\right) =e^{(a+b)t}\left( \cosh ^{2}t,\cosh t\sinh t,\sinh ^{2}t,\cosh
t\sinh t\right)
\end{equation*}%
It can be easily seen that
\begin{equation*}
\varphi \left( t_{1}\right) \cdot \varphi \left( t_{2}\right) =\varphi
\left( t_{1}+t_{2}\right)
\end{equation*}%
for all $t_{1},t_{2}.$ Also $\varphi ^{-1}\left( t\right) =\varphi \left(
-t\right) .$ Hence $\left( \varphi \left( t\right) ,\cdot \right) $ is a one
parameter Lie subgroup of Lie group $\left( M_{t_{i}},\cdot \right) .$
\end{proof}

\begin{corollary}
Let $\gamma :\mathbb{R\rightarrow R}_{1}^{2}$ and $\delta :\mathbb{%
R\rightarrow R}_{1}^{2}$ be two Lorentzian circles centered at O with the
same parameter, i.e., $\gamma \left( t\right) =\left( \cosh t,\sinh t\right)
$ and $\delta \left( t\right) =\left( \cosh t,\sinh t\right) .$ Then their
tensor product is a one parameter subgroup of Lie group $M_{t_{i}}^{\ast }.$
\end{corollary}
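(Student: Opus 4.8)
The plan is to deduce this corollary directly from the immediately preceding Proposition (the Case III proposition), exactly as the analogous corollaries in Cases I and II were deduced from their governing propositions. The two Lorentzian circles $\gamma(t)=(\cosh t,\sinh t)$ and $\delta(t)=(\cosh t,\sinh t)$ are precisely the specialization $a=b=0$ of the two hyperbolic spirals $\gamma(t)=e^{at}(\cosh t,\sinh t)$ and $\delta(t)=e^{bt}(\cosh t,\sinh t)$ appearing there. First I would invoke the Proposition with $a=b=0$ to get, from the tensor product rule (1) with $\alpha=\beta=-1$, the curve
\begin{equation*}
\varphi(t)=\gamma(t)\otimes\delta(t)=\left(\cosh^{2}t,\ \cosh t\sinh t,\ \sinh^{2}t,\ \cosh t\sinh t\right),
\end{equation*}
which by that Proposition already satisfies $\varphi(t_{1})\cdot\varphi(t_{2})=\varphi(t_{1}+t_{2})$ and $\varphi^{-1}(t)=\varphi(-t)$; hence $\varphi(\mathbb{R})$ is a one parameter subgroup of $\left(M_{t_{i}},\cdot\right)$.

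The only substantive step is to verify that this one parameter subgroup in fact lies inside the unit hyperquadric $M_{t_{i}}^{\ast}$, i.e. that every $\varphi(t)$ has unit norm. Recalling that for $\alpha=\beta=-1$ the metric reads $g_{t_{i}}=dx_{1}^{2}-dx_{2}^{2}+dx_{3}^{2}-dx_{4}^{2}$, I would compute
\begin{equation*}
g_{t_{i}}\!\left(\varphi(t),\varphi(t)\right)=\cosh^{4}t-\cosh^{2}t\sinh^{2}t+\sinh^{4}t-\cosh^{2}t\sinh^{2}t=\left(\cosh^{2}t-\sinh^{2}t\right)^{2}=1,
\end{equation*}
using the hyperbolic identity $\cosh^{2}t-\sinh^{2}t=1$. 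This shows $\varphi(t)\subset M_{t_{i}}^{\ast}$ for all $t$.

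To finish, I would combine these two facts: the multiplicative relations $\varphi(t_{1})\cdot\varphi(t_{2})=\varphi(t_{1}+t_{2})$ and $\varphi^{-1}(t)=\varphi(-t)$ hold with respect to the same generalized bicomplex product $\cdot$, and $M_{t_{i}}^{\ast}$ is a subgroup of $M_{t_{i}}$ (indeed a $2$-dimensional Lie subgroup). Since $\varphi(\mathbb{R})$ is contained in $M_{t_{i}}^{\ast}$ and is closed under the product and inversion inherited from $M_{t_{i}}$, it is a one parameter subgroup of the Lie group $M_{t_{i}}^{\ast}$, as claimed. I do not anticipate any genuine obstacle here: the group-theoretic content is entirely borrowed from the preceding Proposition, and the sole computation needed is the perfect-square simplification above, so the argument is essentially a one-line verification of the norm condition.
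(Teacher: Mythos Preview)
Your proposal is correct and follows essentially the same approach as the paper: specialize the preceding Proposition to $a=b=0$ and verify that $g_{t_i}(\varphi(t),\varphi(t))=1$ so that the one parameter subgroup lands in $M_{t_i}^{\ast}$. Your explicit perfect-square computation of the norm is a bit more detailed than what the paper writes, but the logic is identical.
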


\begin{proposition}
Let $\varphi \left( t\right) $ be tensor product of two Lorentzian circles
centered at O with the same parameter. Then the left invariant vector field
on $\varphi \left( t\right) $ is $X=X_{2}+X_{4},$ where $X_{2}$ and $X_{4}$
are left invariant vector fields on $M_{t_{i}}^{\ast }.$
\end{proposition}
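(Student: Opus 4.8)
The plan is to follow exactly the scheme used for the already-proved statement in Case I: I would first write $\varphi(t)$ explicitly, then read off its value and velocity at $t=0$, and finally transport the resulting tangent vector by left translation through the generalized bicomplex product.

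First I would substitute $\gamma(t)=\delta(t)=(\cosh t,\sinh t)$ and $\alpha=\beta=-1$ into the tensor product rule (1), obtaining
\begin{equation*}
\varphi(t)=\left(\cosh^{2}t,\ \cosh t\sinh t,\ \sinh^{2}t,\ \cosh t\sinh t\right),
\end{equation*}
which is precisely the curve of the preceding Proposition with $a=b=0$, so that every point lies on $M_{t_{i}}^{\ast}$ by the corollary just above. Evaluating at $t=0$ should give the identity $\varphi(0)=(1,0,0,0)=e$.

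Next I would differentiate and set $t=0$. Using $\tfrac{d}{dt}\cosh^{2}t=2\cosh t\sinh t$, $\tfrac{d}{dt}(\cosh t\sinh t)=\sinh^{2}t+\cosh^{2}t$, and $\tfrac{d}{dt}\sinh^{2}t=2\sinh t\cosh t$, I expect to obtain $\varphi'(0)=(0,1,0,1)=X_{e}$. The point worth emphasizing is that, although both factor curves are now Lorentzian circles rather than one Lorentzian and one Euclidean circle as in Case I, the velocity at the identity still comes out to be $(0,1,0,1)$; as a generalized bicomplex number this is $X_{e}=i+ij$, the same element as in Case I.

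Finally, to produce the left invariant vector field I would left translate $X_{e}$: at a point $g=x_{1}1+x_{2}i+x_{3}j+x_{4}ij$ this gives $L_{g}^{\ast}(X_{e})=g\cdot X_{e}=g\cdot i+g\cdot ij$. Invoking the Lie algebra theorem for $M_{t_{i}}$, which identifies $(X_{2})_{g}=g\cdot i$ and $(X_{4})_{g}=g\cdot ij$, the sum is exactly $X_{2}+X_{4}$, the left invariant vector fields spanning the Lie algebra of $M_{t_{i}}^{\ast}$, as claimed. I do not expect any genuine obstacle: the only computation demanding attention is the differentiation yielding $(0,1,0,1)$, after which the argument is identical to Case I, and the apparent difference in the geometry of the two factor curves turns out to be immaterial to the resulting vector field.
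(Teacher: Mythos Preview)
Your proposal is correct and follows exactly the scheme the paper establishes in Case~I: write out $\varphi(t)$ from rule~(1) with $\alpha=-1$, compute $\varphi(0)=e$ and $\varphi'(0)=(0,1,0,1)=i+ij$, then left translate to get $g\cdot(i+ij)=X_{2}+X_{4}$. The paper does not spell out a separate proof here, relying on the Case~I template, and your argument is precisely that template carried over.
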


\begin{proposition}
Let $\gamma :\mathbb{R\rightarrow R}_{1}^{2}$, $\gamma \left( t\right)
=e^{at}\left( \cosh t,\sinh t\right) $ and $\delta :\mathbb{R\rightarrow R}%
_{1}^{2}$ $\delta \left( s\right) =e^{bs}\left( \cosh s,\sinh s\right) $ be
two spirals $\left( a,b\in \mathbb{R}\right) .$ Then their tensor product is
2-dimensional Lie subgroup of $M_{t_{i}}.$

\begin{proof}
By using tensor product rule given by (1), we get%
\begin{equation*}
f\left( t,s\right) =e^{at+bs}\left( \cosh t\cosh s,\cosh t\sinh s,-\sinh
t\sinh s,\sinh t\cosh s\right)
\end{equation*}%
Every point of $f\left( t,s\right) $\ is on the hyperquadric $M_{t_{i}}.$
Since $f\left( t,s\right) $ is both subgroup and submanifold of a Lie group $%
M_{t_{i}},$ it is a 2-dimensional Lie subgroup of $M_{t_{i}}.$
\end{proof}
\end{proposition}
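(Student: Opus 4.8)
The plan is to exhibit the tensor product surface as the image of a group homomorphism from $(\mathbb{R}^{2},+)$ into $M_{t_{i}}$ and then to check that this image is a two--dimensional submanifold. First I would substitute $\alpha=\beta=-1$ together with $\gamma(t)=e^{at}(\cosh t,\sinh t)$ and $\delta(s)=e^{bs}(\cosh s,\sinh s)$ into the tensor product rule (1) to obtain $f(t,s)$ explicitly. The decisive observation is that $f$ factors through the generalized bicomplex product: writing $A(t)=\gamma_{1}(t)+\gamma_{2}(t)\,ij$ and $B(s)=\delta_{1}(s)+\delta_{2}(s)\,i$, a direct expansion using $ij\cdot i=i^{2}j=-\alpha j$ yields
\begin{equation*}
A(t)\cdot B(s)=\gamma_{1}\delta_{1}+\gamma_{1}\delta_{2}\,i-\alpha\gamma_{2}\delta_{2}\,j+\gamma_{2}\delta_{1}\,ij=f(t,s).
\end{equation*}
Thus the surface is the product of its two coordinate curves $A(t)=f(t,0)$ and $B(s)=f(0,s)$.

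Next I would verify that $A$ and $B$ are one--parameter subgroups. Because $\alpha=\beta=-1$ forces $i^{2}=-\alpha=1$ and $(ij)^{2}=\alpha\beta=1$, both $i$ and $ij$ behave as hyperbolic (split) units, so the hyperbolic angle--addition formulas give
\begin{equation*}
A(t_{1})\cdot A(t_{2})=A(t_{1}+t_{2}),\qquad B(s_{1})\cdot B(s_{2})=B(s_{1}+s_{2}),
\end{equation*}
the exponential prefactors simply adding in the exponent. Since the generalized bicomplex product is commutative (its defining formulas are symmetric under $x\leftrightarrow y$), $A(t)$ and $B(s)$ commute, and therefore
\begin{equation*}
f(t_{1},s_{1})\cdot f(t_{2},s_{2})=A(t_{1})A(t_{2})\,B(s_{1})B(s_{2})=f(t_{1}+t_{2},s_{1}+s_{2}).
\end{equation*}
With $f(0,0)=1$ and $f(t,s)^{-1}=f(-t,-s)$, the image is an abelian subgroup of $M_{t_{i}}$; that it genuinely lies in $M_{t_{i}}$ is immediate, since the coordinates of $f$ satisfy $x_{1}x_{3}+\alpha x_{2}x_{4}=-\alpha\gamma_{1}\gamma_{2}\delta_{1}\delta_{2}+\alpha\gamma_{1}\gamma_{2}\delta_{1}\delta_{2}=0$ identically.

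Finally I would confirm the dimension. Computing the tangent vectors at the identity gives $\partial_{t}f(0,0)=(a,0,0,1)$ and $\partial_{s}f(0,0)=(b,1,0,0)$, which are linearly independent; since $f$ is a homomorphism, this rank--two condition propagates by left translation to every point, so $f$ is an immersion and its image is a $2$--dimensional submanifold. Being simultaneously a subgroup and a submanifold of the Lie group $M_{t_{i}}$, the image is a $2$--dimensional Lie subgroup. The crux of the argument is really the factorization $f=A\cdot B$, which trivializes the otherwise messy verification of the group law; once that is in hand, the one genuine subtlety is the immersion step, where the presence of zero divisors coming from the split units $i,ij$ means one cannot simply assert independence of $\partial_{t}f$ and $\partial_{s}f$, but the evaluation at the identity above settles it cleanly.
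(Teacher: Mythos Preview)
Your argument is correct, and in fact considerably more complete than the paper's own proof, which simply writes down $f(t,s)$, asserts that it lies on $M_{t_{i}}$, and then states without verification that it is both a subgroup and a submanifold. Your route is genuinely different: instead of verifying $f(t_{1},s_{1})\cdot f(t_{2},s_{2})=f(t_{1}+t_{2},s_{1}+s_{2})$ by a direct coordinate computation (which is what the paper's pattern in the analogous Case~I propositions implicitly relies on), you factor $f$ through the algebra as $f(t,s)=A(t)\cdot B(s)$ with $A(t)=\gamma_{1}+\gamma_{2}\,ij$ and $B(s)=\delta_{1}+\delta_{2}\,i$, reduce the group law to the one--parameter statements $A(t_{1})A(t_{2})=A(t_{1}+t_{2})$ and $B(s_{1})B(s_{2})=B(s_{1}+s_{2})$ via commutativity, and then handle the manifold side by computing the rank of $df$ at the identity and pushing it around by left translation. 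What your approach buys is conceptual clarity and an honest treatment of the immersion step that the paper omits; what the paper's bare computation buys is brevity. One small point you leave implicit: membership in $M_{t_{i}}$ also requires $g_{t_{i}}(f,f)\neq 0$, which here equals $e^{2(at+bs)}\neq 0$ and follows immediately, but is worth stating since it is part of the definition of $M_{t_{i}}$.
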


\begin{corollary}
Let $\gamma :\mathbb{R\rightarrow R}_{1}^{2}$, $\gamma \left( t\right)
=\left( \cosh t,\sinh t\right) $ and $\delta :\mathbb{R\rightarrow R}%
_{1}^{2} $ $\delta \left( s\right) =\left( \cosh s,\sinh s\right) $ be two
Lorentzian circles centered at O $\left( a,b\in \mathbb{R}\right) .$ Then
their tensor product is 2-dimensional Lie subgroup of $M_{t_{i}}^{\ast }.$
\end{corollary}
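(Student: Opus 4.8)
The plan is to obtain this corollary as the $a=b=0$ specialization of the preceding Proposition, in exact parallel with the arguments used in Case~I and Case~II. Since that Proposition already guarantees that the tensor product surface $f(t,s)=\gamma(t)\otimes\delta(s)$ is a $2$-dimensional Lie subgroup of $M_{t_i}$ for all $a,b$, the only thing left to prove is that, when $a=b=0$, this surface lies entirely inside the unit hyperquadric $M_{t_i}^{\ast}$.

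First I would set $a=b=0$ and write out
\begin{equation*}
f(t,s)=\left(\cosh t\cosh s,\cosh t\sinh s,\sinh t\sinh s,\sinh t\cosh s\right),
\end{equation*}
the third component being $-\alpha\gamma_2\delta_2=\sinh t\sinh s$ since $\alpha=-1$. The key step is then the norm computation $g_{t_i}(f,f)=1$. Because $\alpha=\beta=-1$, the metric is $g_{t_i}=dx_1^2-dx_2^2+dx_3^2-dx_4^2$; grouping the four squared components as $(x_1^2-x_2^2)+(x_3^2-x_4^2)$ and applying $\cosh^2-\sinh^2=1$ in the $s$-variable reduces these to $\cosh^2 t-\sinh^2 t$, which equals $1$. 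Hence $f(t,s)\subset M_{t_i}^{\ast}$.

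Finally, combining this inclusion with the fact that $f(t,s)$ is simultaneously a subgroup and a $2$-dimensional submanifold, and that $M_{t_i}^{\ast}$ is itself a $2$-dimensional Lie subgroup of $M_{t_i}$, I conclude that $f(t,s)$ is a $2$-dimensional Lie subgroup of $M_{t_i}^{\ast}$. I do not anticipate a genuine obstacle: the computation is routine, and the only point demanding care is the sign of the third coordinate, which must be taken as $+\sinh t\sinh s$ (as dictated by formula~(1) with $\alpha=-1$) so that the defining relation $x_1x_3+\alpha x_2x_4=0$ of $M_{t_i}$ holds identically.
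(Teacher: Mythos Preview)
Your proposal is correct and follows exactly the pattern the paper uses for the parallel results in Case~I and Case~II: specialize the preceding Proposition to $a=b=0$ and verify the norm condition $g_{t_i}(f,f)=1$ so that the surface lands in $M_{t_i}^{\ast}$. The paper states this corollary without proof, but your argument matches the proof given for the analogous Proposition in Case~I; you even correctly flag that the third coordinate must be $+\sinh t\sinh s$ (as formula~(1) with $\alpha=-1$ dictates), which is a sign slip in the paper's own statement of the preceding Proposition.
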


\begin{proposition}
Let $\gamma :\mathbb{R\rightarrow R}_{1}^{2}$, $\gamma \left( t\right)
=\left( \cosh t,\sinh t\right) $ and $\delta :\mathbb{R\rightarrow R}_{1}^{2}
$ $\delta \left( s\right) =\left( \cosh s,\sinh s\right) $ be two Lorentzian
circles centered at O $\left( a,b\in \mathbb{R}\right) .$ Then, the left
invariant vector fields on tensor product surface $f\left( t,s\right)
=\gamma \left( t\right) \otimes \delta \left( s\right) $ are $X_{2}$ and $%
X_{4}$ which are the left invariant vector fields on $M_{t_{i}}^{\ast }.$
\end{proposition}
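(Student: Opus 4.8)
The plan is to reproduce, verbatim in structure, the scheme used for the analogous statement in Case I (the proposition whose answer was also $X_{2},X_{4}$), because the underlying mechanism is identical. The essential structural fact is that $M_{t_{i}}^{\ast }$ sits inside the multiplicative structure of the algebra $C_{\alpha \beta }$, so for $g\in M_{t_{i}}^{\ast }$ the left translation $L_{g}$ is just the map $v\mapsto g\cdot v$, which is linear; hence its differential equals itself, and the left invariant vector field extending a tangent vector $u$ at the identity is $L_{g}^{\ast }(u)=g\cdot u$. This is exactly the identification already used in the earlier propositions, so I would invoke it here.

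First I would pin down the identity and the two coordinate tangent vectors there. With $\alpha =\beta =-1$ and $a=b=0$, rule (1) gives
\[
f(t,s)=\left( \cosh t\cosh s,\cosh t\sinh s,-\sinh t\sinh s,\sinh t\cosh s\right) ,
\]
so that $e=f(0,0)=(1,0,0,0)$ is the unit of this $2$-dimensional subgroup. Differentiating and evaluating at $(0,0)$ with $\sinh 0=0,\ \cosh 0=1$, only the terms obtained by differentiating a $\sinh$ in a single slot survive, and I expect
\[
u_{1}=\left. \frac{\partial f}{\partial t}\right\vert _{e}=(0,0,0,1),\qquad u_{2}=\left. \frac{\partial f}{\partial s}\right\vert _{e}=(0,1,0,0).
\]
Note that the third entry vanishes at $e$ irrespective of the sign carried by $-\alpha \gamma _{2}\delta _{2}$, so the precise sign of that middle term is immaterial to the conclusion.

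Next I would read these back as generalized bicomplex numbers, namely $u_{1}=ij$ and $u_{2}=i$, and apply the left translation differential using the product of the relevant Definition with $g=x_{1}1+x_{2}i+x_{3}j+x_{4}ij$:
\[
L_{g}^{\ast }(u_{1})=g\cdot u_{1}=\left( x_{1}1+x_{2}i+x_{3}j+x_{4}ij\right) \cdot ij=X_{4},
\]
\[
L_{g}^{\ast }(u_{2})=g\cdot u_{2}=\left( x_{1}1+x_{2}i+x_{3}j+x_{4}ij\right) \cdot i=X_{2}.
\]
These outputs are exactly the fields $X_{4}=(\alpha \beta x_{4},-\beta x_{3},-\alpha x_{2},x_{1})$ and $X_{2}=(-\alpha x_{2},x_{1},-\alpha x_{4},x_{3})$ produced in the Theorem computing the Lie algebra of $M_{t_{i}}$; and by the preceding Corollary they span the Lie algebra of $M_{t_{i}}^{\ast }$, of which this tensor product surface is a subgroup. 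Hence the left invariant vector fields on $f(t,s)$ are precisely $X_{2}$ and $X_{4}$.

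There is no genuine obstacle; once the identification $L_{g}^{\ast }=g\cdot(\cdot)$ is granted, everything is a direct computation. The only point requiring care is the sign bookkeeping in both rule (1) and the generalized bicomplex product at $\alpha =\beta =-1$, together with the verification that the tangent vectors at $e$ coincide with the algebra units $i$ and $ij$ exactly, rather than some linear combination of basis elements; it is this exact matching that makes the left invariant fields land on the single basis fields $X_{2},X_{4}$ instead of a mixture.
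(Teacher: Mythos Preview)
Your proof is correct and follows exactly the template the paper uses in the analogous Case~I proposition: compute $e=f(0,0)$, evaluate the coordinate tangent vectors there to obtain $u_{1}=ij$ and $u_{2}=i$, then push forward by left translation $L_{g}^{\ast}(u)=g\cdot u$ to land on $X_{4}$ and $X_{2}$. One small remark: with $\alpha=-1$ rule~(1) actually gives $+\sinh t\sinh s$ in the third slot (the paper's own formula in the preceding proposition carries the same slip), but as you already note this sign is irrelevant since that entry and its first partials vanish at $(0,0)$.
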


\begin{remark}
The Case I in this subsection coincides the paper studied by Karaku\c{s}
\"{O}. and Yayl\i \ [4]. So it can be considered that this subsection is a
generalization of this study.
\end{remark}

\subsection{Tensor Product Surfaces on $M_{t_{j}}$ Hyperquadric and Some
Special Lie Subgroups}

In this subsection, we change the definition of tensor product as follows:

Let $\gamma :\mathbb{R\rightarrow R}_{k}^{2}$ $\left( +\text{ }-\alpha \beta
\right) $ and $\delta :\mathbb{R\rightarrow R}_{t}^{2}$ $\left( +\text{ }%
\beta \right) $ be planar curves in Euclidean or Lorentzian space. Put $%
\gamma \left( t\right) =\left( \gamma _{1}\left( t\right) ,\gamma _{2}\left(
t\right) \right) $ and $\delta \left( s\right) =\left( \delta _{1}\left(
s\right) ,\delta _{2}\left( s\right) \right) .$ Let us define their tensor
product as%
\begin{equation*}
f=\gamma \otimes \delta :\mathbb{R}^{2}\rightarrow \mathbb{R}_{v}^{4}\text{
\  \ }\left( +\text{ }\alpha \text{ }-\beta \text{ }-\alpha \beta \right) ,
\end{equation*}%
\begin{equation}
f\left( t,s\right) =\left( \gamma _{1}\left( t\right) \delta _{1}\left(
s\right) ,-\beta \gamma _{2}\left( t\right) \delta _{2}\left( s\right)
,\gamma _{1}\left( t\right) \delta _{2}\left( s\right) ,\gamma _{2}\left(
t\right) \delta _{1}\left( s\right) \right)
\end{equation}%
tensor product surface given by (3) is a surface on $M_{t_{j}}$
hyperquadric. Tangent vector fields of $f\left( t,s\right) $ can be easily
computed as%
\begin{eqnarray}
\frac{\partial f}{\partial t} &=&\left( \gamma _{1}^{\prime }\left( t\right)
\delta _{1}\left( s\right) ,-\beta \gamma _{2}^{\prime }\left( t\right)
\delta _{2}\left( s\right) ,\gamma _{1}^{\prime }\left( t\right) \delta
_{2}\left( s\right) ,\gamma _{2}^{\prime }\left( t\right) \delta _{1}\left(
s\right) \right)  \\
\frac{\partial f}{\partial s} &=&\left( \gamma _{1}\left( t\right) \delta
_{1}^{\prime }\left( s\right) ,-\beta \gamma _{2}\left( t\right) \delta
_{2}^{\prime }\left( s\right) ,\gamma _{1}\left( t\right) \delta
_{2}^{\prime }\left( s\right) ,\gamma _{2}\left( t\right) \delta
_{1}^{\prime }\left( s\right) \right)   \notag
\end{eqnarray}%
By using (4), we have
\begin{eqnarray*}
g_{11} &=&g\left( \frac{\partial f}{\partial t},\frac{\partial f}{\partial t}%
\right) =g_{1}(\gamma ^{\prime },\gamma ^{\prime })g_{2}\left( \delta
,\delta \right)  \\
g_{12} &=&g\left( \frac{\partial f}{\partial t},\frac{\partial f}{\partial s}%
\right) =g_{1}(\gamma ,\gamma ^{\prime })g_{2}\left( \delta ,\delta ^{\prime
}\right)  \\
g_{22} &=&g\left( \frac{\partial f}{\partial s},\frac{\partial f}{\partial s}%
\right) =g_{1}(\gamma ,\gamma )g_{2}\left( \delta ^{\prime },\delta ^{\prime
}\right)
\end{eqnarray*}%
where $g_{1}=dx_{1}^{2}-\alpha \beta dx_{2}^{2}$ and $g_{2}=dx_{1}^{2}+\beta
dx_{2}^{2}$ are the metrics of $\mathbb{R}_{k}^{2}$ and $\mathbb{R}_{t}^{2}$%
, respectively. Consequently, an orthonormal basis for the tangent space of $%
f(t,s)$ is given by%
\begin{eqnarray*}
e_{1} &=&\frac{1}{\sqrt{\left \vert g_{11}\right \vert }}\frac{\partial f}{%
\partial t} \\
e_{2} &=&\frac{1}{\sqrt{\left \vert g_{11}\left(
g_{11}g_{22}-g_{12}^{2}\right) \right \vert }}\left( g_{11}\frac{\partial f}{%
\partial s}-g_{12}\frac{\partial f}{\partial t}\right)
\end{eqnarray*}

\begin{remark}
Tensor product surface given by (3) is a surface in $\mathbb{R}^{4}$ or $%
\mathbb{R}_{2}^{4}$ according to the case of $\alpha $ and $\beta .$ If we
take as $\alpha =\beta =1,$ we obtain a tensor product surface of a
Lorentzian plane curve and a Euclidean plane curve in $\mathbb{R}_{2}^{4}.$
If we take as $\alpha =1$, $\beta =-1,$ we obtain a tensor product surface
of a Euclidean plane curve and a Lorentzian plane curve in $\mathbb{R}%
_{2}^{4}.$ If we take as $\alpha =-1$, $\beta =1,$ we obtain a tensor
product surface of two Euclidean plane curves in $\mathbb{R}^{4}.$ If we
take as $\alpha =-1$, $\beta =-1$ we obtain a tensor product surface of two
Lorentzian plane curves in $\mathbb{R}_{2}^{4}.$
\end{remark}

Now we investigate Lie group structure of tensor product surfaces given by
the parametrization (3) in $\mathbb{R}^{4}$ or $\mathbb{R}_{2}^{4}$
according to above cases. Morever we obtain left invariant vector fields of
the tensor product surface that has the structure of Lie group.

\subsubsection{Case I $\protect \alpha =\protect \beta =1$}

\begin{proposition}
Let $\gamma :\mathbb{R\rightarrow R}_{1}^{2}$ $\left( +\text{ }-\right) $ be
a hyperbolic spiral and $\delta :\mathbb{R\rightarrow R}^{2}$ $\left( +\text{
}+\right) $ be a spiral with the same parameter, i.e. $\gamma \left(
t\right) =e^{at}\left( \cosh t,\sinh t\right) $ and $\delta \left( t\right)
=e^{bt}\left( \cos t,\sin t\right) $ $\left( a,b\in \mathbb{R}\right) .$
Their tensor product is a one parameter subgroup of Lie group $M_{t_{j}}.$
\end{proposition}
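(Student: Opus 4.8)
The plan is to exhibit the explicit one-parameter curve $\varphi(t)=\gamma(t)\otimes\delta(t)$, verify it lies on $M_{t_j}$, and then establish the homomorphism property. Setting $\alpha=\beta=1$ in the tensor product rule (3) with $\gamma(t)=e^{at}(\cosh t,\sinh t)$ and $\delta(t)=e^{bt}(\cos t,\sin t)$, I would first compute
$$\varphi(t)=e^{(a+b)t}\left(\cosh t\cos t,\,-\sinh t\sin t,\,\cosh t\sin t,\,\sinh t\cos t\right).$$
A one-line check then gives $x_1x_2+x_3x_4=0$ and $g_{t_j}(\varphi(t),\varphi(t))=x_1^2-x_2^2+x_3^2-x_4^2=e^{2(a+b)t}\neq 0$, so $\varphi(t)$ indeed lies on the hyperquadric $M_{t_j}$ for every $t$.

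The conceptual heart of the argument, which I would emphasize, is that the tensor product rule (3) is itself a generalized bicomplex product in disguise. Embedding $\gamma$ in the $\{1,ij\}$-plane as $\gamma_1+\gamma_2\,ij$ and $\delta$ in the $\{1,j\}$-plane as $\delta_1+\delta_2\,j$, a short computation using $(ij)\,j=i\,j^{2}=-\beta i$ yields
$$\left(\gamma_1+\gamma_2\,ij\right)\cdot\left(\delta_1+\delta_2\,j\right)=\gamma_1\delta_1-\beta\gamma_2\delta_2\,i+\gamma_1\delta_2\,j+\gamma_2\delta_1\,ij,$$
whose coordinate vector is exactly (3). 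Thus $\varphi(t)$ is the generalized bicomplex product of the two embedded curves.

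With this identification and $\alpha=\beta=1$, each factor exponentiates. Since $(ij)^2=\alpha\beta=1$ one has $\cosh t+\sinh t\,ij=e^{t\,ij}$, so the $\gamma$-factor equals $e^{(a+ij)t}$; since $j^{2}=-\beta=-1$ one has $\cos t+\sin t\,j=e^{tj}$, so the $\delta$-factor equals $e^{(b+j)t}$. Because the product formula defining $C_{\alpha\beta}$ is symmetric in its two arguments, the algebra is commutative, the two exponents commute, and
$$\varphi(t)=e^{(a+ij)t}\,e^{(b+j)t}=e^{\left[(a+b)+j+ij\right]t}.$$
Writing $X=(a+b)+j+ij$, the one-parameter subgroup property is then immediate: $\varphi(t_1)\cdot\varphi(t_2)=e^{Xt_1}e^{Xt_2}=e^{X(t_1+t_2)}=\varphi(t_1+t_2)$, while $\varphi(0)=1$ and $\varphi(-t)=\varphi(t)^{-1}$. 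Hence $\left(\varphi(t),\cdot\right)$ is a one-parameter Lie subgroup of $\left(M_{t_j},\cdot\right)$.

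I expect the main obstacle to be the structural recognition step rather than any single calculation: one must pin down correctly into which coordinate planes $\gamma$ and $\delta$ embed so that their product reproduces (3), and then confirm that $(ij)^2=1$ and $j^{2}=-1$ when $\alpha=\beta=1$, so that the hyperbolic and circular factors genuinely exponentiate. The commutativity of $C_{\alpha\beta}$ — the feature that distinguishes bicomplex numbers from quaternions — is precisely what licenses $e^{X}e^{Y}=e^{X+Y}$ and is essential here. If one prefers to bypass the exponential machinery, the same conclusion follows from substituting $\varphi(t_1)$ and $\varphi(t_2)$ directly into the generalized bicomplex product and invoking the addition formulas for $\cosh,\sinh,\cos,\sin$; there the only care needed is tracking the signs carried by $\alpha$ and $\beta$ in the cross terms.
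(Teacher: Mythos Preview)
Your argument is correct and arrives at the same formula for $\varphi(t)$ as the paper, but the route you take from there is genuinely different. The paper simply writes down $\varphi(t)$ and asserts that $\varphi(t_{1})\cdot\varphi(t_{2})=\varphi(t_{1}+t_{2})$ ``can be easily seen'' --- the intended verification is a direct expansion of the generalized bicomplex product together with the addition formulas for $\cosh$, $\sinh$, $\cos$, $\sin$. Your proof instead identifies the tensor product rule (3) as the bicomplex product of the embeddings $\gamma_{1}+\gamma_{2}\,ij$ and $\delta_{1}+\delta_{2}\,j$, exponentiates each factor, and invokes commutativity of $C_{\alpha\beta}$ to write $\varphi(t)=e^{Xt}$ with $X=(a+b)+j+ij$. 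This buys you an explanation rather than a verification: the one-parameter subgroup property is forced by the exponential form, and the same template visibly generalizes to all the other cases in the paper without redoing any trigonometric algebra. The paper's approach is shorter on the page but opaque; yours requires the extra recognition step you flag (the correct coordinate planes and the signs $(ij)^{2}=\alpha\beta$, $j^{2}=-\beta$), and in return makes the whole family of propositions in Section~5 transparent.
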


\begin{proof}
We obtain
\begin{equation*}
\varphi \left( t\right) =\gamma \left( t\right) \otimes \delta \left(
t\right) =e^{(a+b)t}\left( \cosh t\cos t,-\sinh t\sin t,\cosh t\sin t,\sinh
t\cos t\right)
\end{equation*}%
It can be easily seen that
\begin{equation*}
\varphi \left( t_{1}\right) \cdot \varphi \left( t_{2}\right) =\varphi
\left( t_{1}+t_{2}\right)
\end{equation*}%
for all $t_{1},t_{2}.$ Also $\varphi ^{-1}\left( t\right) =\varphi \left(
-t\right) .$ Hence $\left( \varphi \left( t\right) ,\cdot \right) $ is a one
parameter Lie subgroup of Lie group $\left( M_{t_{j}},\cdot \right) .$
\end{proof}

\begin{corollary}
Let $\gamma :\mathbb{R\rightarrow R}_{1}^{2}$ be a Lorentzian circle
centered at O and $\delta :\mathbb{R\rightarrow R}^{2}$ be circle centered
at O with the same parameter, i.e., $\gamma \left( t\right) =\left( \cosh
t,\sinh t\right) $ and $\delta \left( t\right) =\left( \cos t,\sin t\right)
. $ Then their tensor product is a one parameter subgroup of Lie group $%
M_{t_{j}}^{\ast }.$
\end{corollary}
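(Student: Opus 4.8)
The plan is to specialize the preceding Proposition --- in which $\gamma(t) = e^{at}(\cosh t, \sinh t)$ and $\delta(t) = e^{bt}(\cos t, \sin t)$ --- to the case $a = b = 0$. Setting the exponential factors equal to $1$ turns $\gamma$ into the Lorentzian circle $(\cosh t, \sinh t)$ and $\delta$ into the Euclidean circle $(\cos t, \sin t)$, and the tensor product rule (3) then yields
\begin{equation*}
\varphi(t) = \gamma(t) \otimes \delta(t) = \left( \cosh t \cos t, -\sinh t \sin t, \cosh t \sin t, \sinh t \cos t \right).
\end{equation*}

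First I would verify that every point of $\varphi(t)$ lies in $M_{t_{j}}^{\ast}$, i.e.\ that $g_{t_{j}}(\varphi(t), \varphi(t)) = 1$. With $\alpha = \beta = 1$ the defining metric reduces to $g_{t_{j}}(x,x) = x_{1}^{2} - x_{2}^{2} + x_{3}^{2} - x_{4}^{2}$. Substituting the four coordinates of $\varphi(t)$ and grouping, the $\cosh^{2} t$ and $\sinh^{2} t$ contributions each factor against $\cos^{2} t + \sin^{2} t = 1$, leaving $\cosh^{2} t - \sinh^{2} t = 1$. Hence $\varphi(t) \subset M_{t_{j}}^{\ast}$.

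The group-theoretic content is already supplied by the preceding Proposition: the identities $\varphi(t_{1}) \cdot \varphi(t_{2}) = \varphi(t_{1} + t_{2})$ and $\varphi^{-1}(t) = \varphi(-t)$ hold for all parameter values, and in particular when $a = b = 0$. Combining these with the membership $\varphi(t) \subset M_{t_{j}}^{\ast}$ just checked shows that $(\varphi(t), \cdot)$ is a one-parameter Lie subgroup of $(M_{t_{j}}^{\ast}, \cdot)$, which is the assertion.

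I expect the only step of real substance to be the norm computation $g_{t_{j}}(\varphi(t), \varphi(t)) = 1$, and even this is routine, resting solely on the Pythagorean identity $\cos^{2} t + \sin^{2} t = 1$ together with $\cosh^{2} t - \sinh^{2} t = 1$. The parallel Corollary in the $M_{t_{i}}$ subsection is established by exactly this template, which confirms that the specialization presents no genuine obstacle.
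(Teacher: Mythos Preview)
Your proposal is correct and follows essentially the same approach as the paper: specialize the preceding Proposition to $a=b=0$, check that $g_{t_{j}}(\varphi(t),\varphi(t))=1$ so that the image lies in $M_{t_{j}}^{\ast}$, and inherit the one-parameter subgroup property $\varphi(t_{1})\cdot\varphi(t_{2})=\varphi(t_{1}+t_{2})$. The paper omits an explicit proof for this corollary, but the parallel corollary in the $M_{t_{i}}$ subsection is proved by exactly the template you describe.
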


\begin{proposition}
Let $\varphi \left( t\right) $ be tensor product of a Lorentzian cirle
centered at O and a circle centered at O with the same parameter. Then the
left invariant vector field on $\varphi \left( t\right) $ is $X=X_{3}+X_{4},$
where $X_{3}$ and $X_{4}$ are left invariant vector fields on $%
M_{t_{j}}^{\ast }.$
\end{proposition}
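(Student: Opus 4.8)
The plan is to follow the same strategy used for the analogous proposition on $M_{t_{i}}^{\ast}$ earlier in the paper. First I would write $\varphi\left(t\right)$ explicitly. Since it is the tensor product, under rule $(3)$ with $\alpha=\beta=1$, of the Lorentzian circle $\gamma\left(t\right)=\left(\cosh t,\sinh t\right)$ and the circle $\delta\left(t\right)=\left(\cos t,\sin t\right)$ sharing the same parameter, I would specialize the formula from the preceding proposition, setting $a=b=0$, to obtain
\begin{equation*}
\varphi\left(t\right)=\left(\cosh t\cos t,-\sinh t\sin t,\cosh t\sin t,\sinh t\cos t\right).
\end{equation*}

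Next I would evaluate at the identity and differentiate. At $t=0$ every mixed term vanishes, so $\varphi\left(0\right)=\left(1,0,0,0\right)=e$, the identity of the group. Differentiating each component and putting $t=0$, the first two components give $0$ and the last two give $1$, hence $\varphi^{\prime}\left(0\right)=\left(0,0,1,1\right)=:X_{e}$. I would then read this tangent vector as the generalized bicomplex number $X_{e}=j+ij$.

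The last step is to push $X_{e}$ forward by the differential of left translation. Using $L_{g}^{\ast}\left(X_{e}\right)=g\cdot X_{e}$ with $g=x_{1}1+x_{2}i+x_{3}j+x_{4}ij$, I would expand the generalized bicomplex product
\begin{equation*}
g\cdot X_{e}=\left(x_{1}1+x_{2}i+x_{3}j+x_{4}ij\right)\cdot\left(j+ij\right)
\end{equation*}
according to the product rule with $\alpha=\beta=1$, and then match the four resulting components against the coordinate expressions for $X_{3}$ and $X_{4}$ recorded in the theorem giving the Lie algebra of $M_{t_{j}}$. The expected outcome is the identity $g\cdot X_{e}=X_{3}+X_{4}$, which is exactly the claimed left invariant vector field.

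The computation is routine, so no genuine obstacle arises; the only point requiring care is the bookkeeping in the generalized bicomplex product, in particular tracking the $\beta$-weighted cross terms (such as $-\beta x_{3}y_{4}-\beta x_{4}y_{3}$ in the $i$-component) so that the output agrees term-by-term with $X_{3}+X_{4}$ rather than with some other combination. Because this is the $M_{t_{j}}$ counterpart of the $M_{t_{i}}$ result, the sole structural change is that $\varphi^{\prime}\left(0\right)$ now points along $j+ij$ instead of $i+ij$; this is precisely what replaces the previous answer $X_{2}+X_{4}$ by $X_{3}+X_{4}$.
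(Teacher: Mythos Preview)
Your proposal is correct and follows exactly the template the paper uses for the analogous $M_{t_{i}}^{\ast}$ proposition: write $\varphi(t)$ explicitly, compute $\varphi(0)=e$ and $\varphi'(0)=(0,0,1,1)=j+ij$, then left-translate via $g\cdot(j+ij)=X_{3}+X_{4}$. The paper in fact states this $M_{t_{j}}^{\ast}$ proposition without proof, relying on the reader to repeat the earlier argument, so your write-up is precisely what is intended.
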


\begin{proposition}
Let $\gamma :\mathbb{R\rightarrow R}_{1}^{2}$, $\gamma \left( t\right)
=e^{at}\left( \cosh t,\sinh t\right) $ be a hyperbolic spiral and $\delta :%
\mathbb{R\rightarrow R}^{2}$ $\delta \left( s\right) =e^{bs}\left( \cos
s,\sin s\right) $ be a spiral $\left( a,b\in \mathbb{R}\right) .$ Then their
tensor product is 2-dimensional Lie subgroup of $M_{t_{i}}.$

\begin{proof}
By using tensor product rule given by (3), we get%
\begin{equation*}
f\left( t,s\right) =e^{at+bs}\left( \cosh t\cos s,-\sinh t\sin s,\cosh t\sin
s,\sinh t\cos s\right)
\end{equation*}%
Every point of $f\left( t,s\right) $\ is on the hyperquadric $M_{t_{j}}.$
Since $f\left( t,s\right) $ is both subgroup and submanifold of a Lie group $%
M_{t_{j}},$ it is a 2-dimensional Lie subgroup of $M_{t_{j}}.$
\end{proof}
\end{proposition}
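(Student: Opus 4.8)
The plan is to realize the tensor product $f(t,s)=\gamma(t)\otimes\delta(s)$ as a homomorphism from $(\mathbb{R}^{2},+)$ into the Lie group $(M_{t_{j}},\cdot)$ and then to invoke the standard fact that a subgroup which is simultaneously an immersed submanifold of a Lie group is itself a Lie subgroup. (I would note that the target in the statement should read $M_{t_{j}}$ rather than $M_{t_{i}}$, since the parametrization $(3)$ places every point on $M_{t_{j}}$.) Throughout we are in Case I, so $\alpha=\beta=1$ and hence $i^{2}=-1$, $j^{2}=-1$, $(ij)^{2}=1$.

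First I would observe that the tensor product rule $(3)$ is nothing but the generalized bicomplex product of two distinguished factors. Applying the generalized bicomplex product, one checks that $(\gamma_{1}+\gamma_{2}\,ij)\cdot(\delta_{1}+\delta_{2}\,j)=\gamma_{1}\delta_{1}-\beta\gamma_{2}\delta_{2}\,i+\gamma_{1}\delta_{2}\,j+\gamma_{2}\delta_{1}\,ij$, which is exactly $f(t,s)$. Thus $f(t,s)=A(t)\cdot B(s)$ with $A(t)=e^{at}(\cosh t+\sinh t\,ij)$ and $B(s)=e^{bs}(\cos s+\sin s\,j)$.

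The central step is the homomorphism identity $f(t_{1},s_{1})\cdot f(t_{2},s_{2})=f(t_{1}+t_{2},s_{1}+s_{2})$. Since the generalized bicomplex product is commutative, the four factors regroup as $A(t_{1})A(t_{2})\,B(s_{1})B(s_{2})$. Because $(ij)^{2}=1$, the unit $ij$ is of split type and the hyperbolic addition formulas give $A(t_{1})A(t_{2})=A(t_{1}+t_{2})$; because $j^{2}=-1$, the unit $j$ is of imaginary type and the circular addition formulas give $B(s_{1})B(s_{2})=B(s_{1}+s_{2})$. Multiplying the two yields the claimed identity, so $f(0,0)=1$ is the identity, $f(t,s)^{-1}=f(-t,-s)$, and $f(\mathbb{R}^{2})$ is a subgroup of $M_{t_{j}}$. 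That it lies on $M_{t_{j}}$ is confirmed by substituting the explicit components into the defining relation $x_{1}x_{2}+\beta x_{3}x_{4}=0$.

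It remains to see that $f$ is an immersion, so that its image is a $2$-dimensional submanifold. This follows from the first fundamental form already recorded in this subsection: $\partial f/\partial t$ and $\partial f/\partial s$ are linearly independent wherever $g_{11}g_{22}-g_{12}^{2}\neq 0$, which holds for these regular spirals. Being at once a subgroup and an immersed submanifold of the Lie group $M_{t_{j}}$, the image $f(\mathbb{R}^{2})$ is a $2$-dimensional Lie subgroup. The one point requiring care is bookkeeping the relations among $i,j,ij$ under $\alpha=\beta=1$ so that the two one-parameter factors exponentiate correctly; once the factorization $f=A\cdot B$ is secured, the rest reduces to the classical addition formulas.
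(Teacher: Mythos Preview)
Your argument is correct and in fact more complete than the paper's. The paper's own proof simply writes down the parametrization
\[
f(t,s)=e^{at+bs}\bigl(\cosh t\cos s,\,-\sinh t\sin s,\,\cosh t\sin s,\,\sinh t\cos s\bigr),
\]
checks that every point lies on $M_{t_{j}}$, and then \emph{asserts} without further computation that $f(t,s)$ is both a subgroup and a submanifold of $M_{t_{j}}$, hence a $2$-dimensional Lie subgroup. No verification of closure under the product, and no mention of the immersion condition, is given.

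Your route differs in that you supply the algebraic reason behind the subgroup property: the factorization $f(t,s)=A(t)\cdot B(s)$ with $A(t)=e^{at}(\cosh t+\sinh t\,ij)$ and $B(s)=e^{bs}(\cos s+\sin s\,j)$, together with commutativity of $C_{\alpha\beta}$ and the identities $(ij)^{2}=1$, $j^{2}=-1$, turns the homomorphism identity $f(t_{1},s_{1})\cdot f(t_{2},s_{2})=f(t_{1}+t_{2},s_{1}+s_{2})$ into the classical hyperbolic and circular addition formulas. This is a genuine addition to the paper's argument: it explains \emph{why} the tensor product rule (3) produces a subgroup, rather than leaving it as a direct but unrecorded computation. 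Your use of the first fundamental form to certify the immersion is likewise more explicit than anything in the paper. Your observation that the statement should read $M_{t_{j}}$ rather than $M_{t_{i}}$ is also correct; this is a typographical slip in the paper, as the proof itself refers to $M_{t_{j}}$.
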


\begin{proposition}
Let $\gamma :\mathbb{R\rightarrow R}_{1}^{2}$, $\gamma \left( t\right)
=\left( \cosh t,\sinh t\right) $ be a Lorentzian circle and $\delta :\mathbb{%
R\rightarrow R}^{2}$ $\delta \left( s\right) =\left( \cos s,\sin s\right) $
be a circle $\left( a,b\in \mathbb{R}\right) .$ Then their tensor product is
2-dimensional Lie subgroup of $M_{t_{j}}^{\ast }.$
\end{proposition}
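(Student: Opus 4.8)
The plan is to obtain this statement as the $a=b=0$ specialization of the preceding proposition, exactly as the analogous $M_{t_{i}}$ result was derived from its spiral proposition. First I would recall that the preceding proposition, applied with the tensor product rule (3), produces for two spirals the surface
\begin{equation*}
f\left( t,s\right) =e^{at+bs}\left( \cosh t\cos s,-\sinh t\sin s,\cosh t\sin s,\sinh t\cos s\right) ,
\end{equation*}
which is already established there to be a $2$-dimensional Lie subgroup of $M_{t_{j}}$. Setting $a=b=0$ collapses the spirals $\gamma (t)=e^{at}\left( \cosh t,\sinh t\right) $ and $\delta (s)=e^{bs}\left( \cos s,\sin s\right) $ to the Lorentzian circle and the circle of the hypothesis, and the surface becomes
\begin{equation*}
f\left( t,s\right) =\left( \cosh t\cos s,-\sinh t\sin s,\cosh t\sin s,\sinh t\cos s\right) .
\end{equation*}

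The only thing left to verify is that this surface lands inside $M_{t_{j}}^{\ast }$ rather than merely $M_{t_{j}}$; that is, that every one of its points has unit norm with respect to $g_{t_{j}}$. Since on $M_{t_{j}}$ the norm is $N_{x}=x\cdot x^{t_{j}}=g_{t_{j}}(x,x)$, which for $\alpha =\beta =1$ reduces to $x_{1}^{2}-x_{2}^{2}+x_{3}^{2}-x_{4}^{2}$, I would substitute the four coordinates of $f(t,s)$ and simplify. Collecting the $\cosh ^{2}t$ and $\sinh ^{2}t$ contributions and using $\cos ^{2}s+\sin ^{2}s=1$ leaves $N_{f(t,s)}=\cosh ^{2}t-\sinh ^{2}t=1$, so $f(t,s)\subset M_{t_{j}}^{\ast }$.

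Finally, since $f(t,s)$ is already a $2$-dimensional Lie subgroup of $M_{t_{j}}$ and is now seen to lie in the Lie subgroup $M_{t_{j}}^{\ast }$, it is a $2$-dimensional Lie subgroup of $M_{t_{j}}^{\ast }$. There is essentially no hard step here: the whole content is the short norm computation. The only point needing minor care is confirming, before checking the unit-norm condition, that the specialized surface still satisfies the defining equation $x_{1}x_{2}+\beta x_{3}x_{4}=0$ of $M_{t_{j}}$ at $a=b=0$; this holds because the two cross terms $(\cosh t\cos s)(-\sinh t\sin s)$ and $(\cosh t\sin s)(\sinh t\cos s)$ cancel.
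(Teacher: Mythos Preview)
Your proposal is correct and follows essentially the same approach as the paper: the paper states this proposition without proof, but the analogous $M_{t_i}$ result is proved exactly by setting $a=b=0$ in the spiral proposition and observing that the resulting surface lies in $M_{t_i}^{\ast}$, which is precisely what you do here for $M_{t_j}^{\ast}$. Your added explicit verification of the norm and of the defining relation $x_1 x_2 + \beta x_3 x_4 = 0$ simply spells out what the paper leaves implicit.
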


\begin{proposition}
Let $\gamma :\mathbb{R\rightarrow R}_{1}^{2}$, $\gamma \left( t\right)
=\left( \cosh t,\sinh t\right) $ be a Lorentzian circle at centered O and $%
\delta :\mathbb{R\rightarrow R}^{2}$ $\delta \left( s\right) =\left( \cos
s,\sin s\right) $ be a circle at centered O $\left( a,b\in \mathbb{R}\right)
.$ Then, the left invariant vector fields on tensor product surface $f\left(
t,s\right) =\gamma \left( t\right) \otimes \delta \left( s\right) $ are $%
X_{3}$ and $X_{4}$ which are the left invariant vector fields on $%
M_{t_{j}}^{\ast }.$
\end{proposition}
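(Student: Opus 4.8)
The plan is to follow exactly the argument used for the corresponding statement on $M_{t_{i}}^{\ast}$, adapting it to the $M_{t_{j}}$ tensor product rule (3). First I would write the surface down explicitly: with $\alpha=\beta=1$ and the given curves $\gamma(t)=(\cosh t,\sinh t)$, $\delta(s)=(\cos s,\sin s)$, formula (3) gives
\[
f(t,s)=\left(\cosh t\cos s,\,-\sinh t\sin s,\,\cosh t\sin s,\,\sinh t\cos s\right).
\]
Evaluating at the origin yields $f(0,0)=(1,0,0,0)$, which is precisely the identity $e$ of the Lie group; hence the $2$-dimensional subgroup carries $e$ as its unit, and its tangent space at $e$ is spanned by the two coordinate velocity vectors.

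Next I would compute those velocities at $e$. Differentiating $f$ in $t$ and in $s$ and setting $t=s=0$, I expect
\[
u_{1}=\left.\frac{\partial f}{\partial t}\right|_{e}=(0,0,0,1),\qquad u_{2}=\left.\frac{\partial f}{\partial s}\right|_{e}=(0,0,1,0),
\]
so that in the generalized bicomplex basis $u_{1}=ij$ and $u_{2}=j$. This reading-off is the one genuinely case-sensitive step: compared with rule (1), the entry $\gamma_{1}\delta_{2}$ now occupies the third ($j$) slot rather than the second ($i$) slot, which is exactly why the $s$-velocity lands on the $j$ axis here. That is the structural reason the answer involves $X_{3}$ and $X_{4}$ instead of $X_{2}$ and $X_{4}$.

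Finally I would push these tangent vectors forward by the differential of left translation. For $g=x_{1}1+x_{2}i+x_{3}j+x_{4}ij$ one has $L_{g}^{\ast}(u)=g\cdot u$, so I would evaluate $g\cdot ij$ and $g\cdot j$ directly from the generalized bicomplex product. By the Lie algebra theorem for $M_{t_{j}}$, whose proof identifies $(X_{4})_{x}=x\cdot ij$ and $(X_{3})_{x}=x\cdot j$, these give $L_{g}^{\ast}(u_{1})=X_{4}$ and $L_{g}^{\ast}(u_{2})=X_{3}$, which is the claim. I anticipate no real obstacle here: the only point requiring care is the correct identification of the tangent vectors $u_{1},u_{2}$ with the basis units $ij,j$ (with attention to the signs carried by $\alpha,\beta$ in the product); once that is fixed, the remaining multiplications are routine and reproduce the vector fields $X_{3}$ and $X_{4}$ on $M_{t_{j}}^{\ast}$ verbatim.
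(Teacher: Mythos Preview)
Your proposal is correct and follows essentially the same approach as the paper: identify $e=f(0,0)$, compute the coordinate tangent vectors $u_{1}=\partial_{t}f|_{e}=ij$ and $u_{2}=\partial_{s}f|_{e}=j$, and then push them forward by $L_{g}^{\ast}(u)=g\cdot u$ to obtain $X_{4}$ and $X_{3}$, exactly as in the paper's proof of the analogous result on $M_{t_{i}}^{\ast}$. The paper in fact states this particular proposition without a separate proof, so your write-up is, if anything, more detailed than what appears there.
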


\subsubsection{Case II $\protect \alpha =-1,\protect \beta =1$}

\begin{proposition}
Let $\gamma :\mathbb{R\rightarrow R}^{2}$ $\left( +\text{ }+\right) $ and $%
\delta :\mathbb{R\rightarrow R}^{2}$ $\left( +\text{ }+\right) $ be two
spirals with the same parameter, i.e. $\gamma \left( t\right) =e^{at}\left(
\cos t,\sin t\right) $ and $\delta \left( t\right) =e^{bt}\left( \cos t,\sin
t\right) $ $\left( a,b\in \mathbb{R}\right) .$ Then their tensor product is
a one parameter subgroup of Lie group $M_{t_{i}}.$
\end{proposition}

\begin{proof}
We obtain
\begin{equation*}
\varphi \left( t\right) =\gamma \left( t\right) \otimes \delta \left(
t\right) =e^{(a+b)t}\left( \cos ^{2}t,-\sin ^{2}t,\cos t\sin t,\cos t\sin
t\right)
\end{equation*}%
It can be easily seen that
\begin{equation*}
\varphi \left( t_{1}\right) \cdot \varphi \left( t_{2}\right) =\varphi
\left( t_{1}+t_{2}\right)
\end{equation*}%
for all $t_{1},t_{2}.$ Also $\varphi ^{-1}\left( t\right) =\varphi \left(
-t\right) .$ Hence $\left( \varphi \left( t\right) ,\cdot \right) $ is a one
parameter Lie subgroup of Lie group $\left( M_{t_{j}},\cdot \right) .$
\end{proof}

\begin{corollary}
Let $\gamma :\mathbb{R\rightarrow R}^{2}$ and $\delta :\mathbb{R\rightarrow R%
}^{2}$ be two circles centered at O with the same parameter, i.e., $\gamma
\left( t\right) =\left( \cos t,\sin t\right) $ and $\delta \left( t\right)
=\left( \cos t,\sin t\right) .$ Then their tensor product is a one parameter
subgroup of Lie group $M_{t_{j}}^{\ast }.$
\end{corollary}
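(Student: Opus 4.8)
The plan is to obtain this corollary as the unit-norm specialization of the preceding Proposition. First I would set $a=b=0$ in that Proposition's parametrization, which collapses the exponential factor $e^{(a+b)t}$ to $1$ and leaves
\[
\varphi(t)=\gamma(t)\otimes\delta(t)=\left(\cos^{2}t,\,-\sin^{2}t,\,\cos t\sin t,\,\cos t\sin t\right),
\]
the tensor product of the two unit circles $\gamma(t)=\delta(t)=(\cos t,\sin t)$ under rule (3). Two things then remain to be checked: that every $\varphi(t)$ actually lies in $M_{t_{j}}^{\ast}$, and that the one-parameter subgroup property survives the specialization.

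For the membership, I would compute $g_{t_{j}}(\varphi(t),\varphi(t))$ using the metric attached to $M_{t_{j}}$, namely $g_{t_{j}}=dx_{1}^{2}-\alpha\,dx_{2}^{2}+\beta\,dx_{3}^{2}-\alpha\beta\,dx_{4}^{2}$. In Case II we have $\alpha=-1$ and $\beta=1$, so this signature is fully Euclidean and reduces to $x_{1}^{2}+x_{2}^{2}+x_{3}^{2}+x_{4}^{2}$. Substituting the components of $\varphi(t)$ gives $\cos^{4}t+\sin^{4}t+2\cos^{2}t\sin^{2}t=(\cos^{2}t+\sin^{2}t)^{2}=1$, so indeed $\varphi(t)\subset M_{t_{j}}^{\ast}$ for every $t$.

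For the subgroup property, I would simply invoke the preceding Proposition: the choice $a=b=0$ is a special instance of the statement already proved there, so $\varphi(t_{1})\cdot\varphi(t_{2})=\varphi(t_{1}+t_{2})$ and $\varphi^{-1}(t)=\varphi(-t)$ continue to hold. Combining this with the norm computation, $\varphi$ is a curve through the identity lying entirely inside $M_{t_{j}}^{\ast}$ and closed under the generalized bicomplex product, hence a one-parameter Lie subgroup of $M_{t_{j}}^{\ast}$.

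The computations here are entirely routine; the only point demanding care is keeping the signature of $g_{t_{j}}$ correct for the specific Case II values $\alpha=-1,\beta=1$, since the same parametrization $\varphi(t)$ would fail to have unit norm under a different choice of $\alpha,\beta$. I expect no genuine obstacle beyond this bookkeeping, as the homomorphism and inverse properties are inherited directly from the Proposition.
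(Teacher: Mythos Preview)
Your proposal is correct and follows essentially the same approach as the paper: specialize the preceding Proposition by setting $a=b=0$, then verify $g_{t_{j}}(\varphi,\varphi)=1$ so that the image lies in $M_{t_{j}}^{\ast}$. The paper states this corollary without proof, but the analogous corollary in Subsection~5.1.1 is proved exactly as you describe, and your explicit check of the signature $(+,+,+,+)$ for $\alpha=-1,\beta=1$ is the only substantive step needed.
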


\begin{proposition}
Let $\varphi \left( t\right) $ be tensor product of two circles centered at
O with the same parameter. Then the left invariant vector field on $\varphi
\left( t\right) $ is $X=X_{3}+X_{4},$ where $X_{3}$ and $X_{4}$ are left
invariant vector fields on $M_{t_{j}}^{\ast }.$
\end{proposition}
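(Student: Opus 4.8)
The plan is to follow the template used for the companion statements about left invariant vector fields on tensor product surfaces: realize $\varphi(t)$ as a one-parameter curve through the identity of $M_{t_{j}}^{\ast}$, extract its velocity at the identity as a generalized bicomplex number, and transport that velocity to an arbitrary point by the differential of left translation, which for an associative algebra is just left multiplication.

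First I would make $\varphi(t)$ explicit by specializing the preceding proposition to $a=b=0$, giving
\begin{equation*}
\varphi(t) = \left( \cos^2 t, -\sin^2 t, \cos t \sin t, \cos t \sin t \right).
\end{equation*}
Then $\varphi(0) = (1,0,0,0) = e$ is the identity, and differentiating componentwise and evaluating at $t=0$ yields $\varphi'(0) = (0,0,1,1) = X_e$; written in the basis $\{1,i,j,ij\}$ this velocity is $X_e = j + ij$.

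Next I would apply the differential $L_g^{\ast}$ of the left translation $L_g$ by $g = x_1 1 + x_2 i + x_3 j + x_4 ij$. Since $M_{t_{j}}$ carries the generalized bicomplex product and this product is associative, $L_g^{\ast}(X_e) = g \cdot X_e$, so by linearity in the second factor
\begin{equation*}
L_g^{\ast}(X_e) = g \cdot (j + ij) = g \cdot j + g \cdot ij.
\end{equation*}
Recalling that in the proof of the Lie algebra theorem for $M_{t_{j}}$ the left invariant fields $X_3$ and $X_4$ are precisely those whose value at $x$ is $x \cdot j$ and $x \cdot ij$, the two summands are $X_3$ and $X_4$, whence $X = X_3 + X_4$.

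The argument is essentially computational, and the only step needing attention is verifying, directly from the generalized bicomplex product at $\alpha=-1,\beta=1$, that $g \cdot j = (-\beta x_3, -\beta x_4, x_1, x_2) = X_3$ and $g \cdot ij = (\alpha\beta x_4, -\beta x_3, -\alpha x_2, x_1) = X_4$. This bookkeeping is the main (if minor) obstacle; once the two products are confirmed to coincide with the stated left invariant vector fields, the identity $X = X_3 + X_4$ follows immediately by linearity of the product in its second argument.
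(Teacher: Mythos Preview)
Your proposal is correct and follows essentially the same approach as the paper: the paper's template (given in full for the analogous Proposition in Case~I of the $M_{t_i}$ subsection) is to evaluate $\varphi(0)=e$, compute $\varphi'(0)$ as a generalized bicomplex number, and then left-translate by $g$ via $L_g^{\ast}(X_e)=g\cdot X_e$ to identify the result with the named left invariant fields. Your computations of $\varphi'(0)=(0,0,1,1)=j+ij$ and of $g\cdot j=X_3$, $g\cdot ij=X_4$ are exactly the ones the paper's method requires.
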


\begin{proposition}
Let $\gamma :\mathbb{R\rightarrow R}^{2}$, $\gamma \left( t\right)
=e^{at}\left( \cos t,\sin t\right) $ and $\delta :\mathbb{R\rightarrow R}%
^{2} $ $\delta \left( s\right) =e^{bs}\left( \cos s,\sin s\right) $ be two
spirals $\left( a,b\in \mathbb{R}\right) .$ Then their tensor product is
2-dimensional Lie subgroup of $M_{t_{j}}.$

\begin{proof}
By using tensor product rule given by (3), we get%
\begin{equation*}
f\left( t,s\right) =e^{at+bs}\left( \cos t\cos s,-\sin t\sin s,\cos t\sin
s,,\sin t\cos s\right)
\end{equation*}%
Every point of $f\left( t,s\right) $\ is on the hyperquadric $M_{t_{j}}.$
Since $f\left( t,s\right) $ is both subgroup and submanifold of a Lie group $%
M_{t_{j}},$ it is a 2-dimensional Lie subgroup of $M_{t_{j}}.$
\end{proof}
\end{proposition}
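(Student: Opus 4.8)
The plan is to follow the template of the earlier propositions: write the tensor product surface explicitly, verify it sits on the hyperquadric $M_{t_j}$, and then exhibit it as both a subgroup and a submanifold. First I would substitute $\gamma_1(t)=e^{at}\cos t$, $\gamma_2(t)=e^{at}\sin t$, $\delta_1(s)=e^{bs}\cos s$, $\delta_2(s)=e^{bs}\sin s$ into the tensor product rule (3) with $\beta=1$, obtaining
\begin{equation*}
f(t,s)=e^{at+bs}\left(\cos t\cos s,\,-\sin t\sin s,\,\cos t\sin s,\,\sin t\cos s\right).
\end{equation*}
To place $f(t,s)$ on $M_{t_j}$ I would check the two defining conditions: the constraint $x_1x_2+\beta x_3x_4=0$ reduces to $e^{2(at+bs)}(-\cos t\sin t\cos s\sin s+\cos t\sin t\cos s\sin s)=0$, and for $\alpha=-1,\beta=1$ the metric gives $g_{t_j}(f,f)=x_1^2+x_2^2+x_3^2+x_4^2=e^{2(at+bs)}\neq 0$, so $f(t,s)\in M_{t_j}$ for every $(t,s)$.

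The subgroup property is the step I would treat most carefully, and the cleanest route is to factor $f$ through the bicomplex exponential. Since $\alpha=-1,\beta=1$ give $j^{2}=-\beta=-1$ and $(ij)^{2}=\alpha\beta=-1$, both $j$ and $ij$ act as imaginary units. Setting $\hat\gamma(t)=\gamma_1(t)\,1+\gamma_2(t)\,ij$ and $\hat\delta(s)=\delta_1(s)\,1+\delta_2(s)\,j$, a direct expansion of the generalized bicomplex product $\hat\gamma(t)\cdot\hat\delta(s)$ reproduces the four components of $f(t,s)$, the cross term being $(ij)\,j=i\,j^{2}=-\beta\,i$, which supplies the $-\beta\gamma_2\delta_2$ in the $i$-slot of (3). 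Moreover $\cos t+\sin t\,(ij)=e^{(ij)t}$ and $\cos s+\sin s\,j=e^{js}$, so $\hat\gamma(t)=e^{(a+ij)t}$ and $\hat\delta(s)=e^{(b+j)s}$, and hence $f(t,s)=e^{(a+ij)t}\cdot e^{(b+j)s}$.

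Because the generalized bicomplex product is commutative and associative, these two exponential factors commute and their exponents add, so $f(t_1,s_1)\cdot f(t_2,s_2)=f(t_1+t_2,s_1+s_2)$, with identity $f(0,0)=1$ and inverse $f(t,s)^{-1}=f(-t,-s)$. Thus $(t,s)\mapsto f(t,s)$ is a homomorphism from $(\mathbb{R}^2,+)$ into $(M_{t_j},\cdot)$ and its image is a subgroup. Finally, since the tangent fields $\partial f/\partial t$ and $\partial f/\partial s$ from (4) are linearly independent, $f$ is an immersion and its image is a $2$-dimensional submanifold of $M_{t_j}$; a subgroup of a Lie group that is also a submanifold is a Lie subgroup, so $f(t,s)$ is a $2$-dimensional Lie subgroup of $M_{t_j}$. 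The only genuine obstacle is the closure identity, which the exponential factorization reduces to additivity of exponents; done by hand it would instead force a term-by-term bicomplex multiplication of $f(t_1,s_1)$ and $f(t_2,s_2)$ followed by the trigonometric addition formulas.
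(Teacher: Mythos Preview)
Your argument is correct and in fact more complete than the paper's own proof. The paper simply writes down
\[
f(t,s)=e^{at+bs}\bigl(\cos t\cos s,\,-\sin t\sin s,\,\cos t\sin s,\,\sin t\cos s\bigr),
\]
asserts that every point lies on $M_{t_j}$, and then states without computation that $f(t,s)$ is both a subgroup and a submanifold of $M_{t_j}$, hence a $2$-dimensional Lie subgroup. No verification of closure under the generalized bicomplex product is given; the reader is implicitly referred back to the template in \S5.1.1, where the analogous claim was checked by direct multiplication and the trigonometric addition formulas.

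Your route is genuinely different at the key step. Instead of a coordinate-by-coordinate verification of $f(t_1,s_1)\cdot f(t_2,s_2)=f(t_1+t_2,s_1+s_2)$, you recognize that for $\alpha=-1$, $\beta=1$ both $j$ and $ij$ square to $-1$, factor $f(t,s)=\hat\gamma(t)\cdot\hat\delta(s)$ with $\hat\gamma(t)=e^{(a+ij)t}$ and $\hat\delta(s)=e^{(b+j)s}$, and then let commutativity of $C_{\alpha\beta}$ turn closure into additivity of exponents. This is cleaner and more conceptual: it explains \emph{why} the image is a subgroup (it is the range of a homomorphism from $(\mathbb{R}^2,+)$), and it generalizes immediately to the other cases in \S5.2 by replacing $\cos$, $\sin$ with $\cosh$, $\sinh$ when the relevant unit squares to $+1$. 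The paper's bare assertion is shorter, but your exponential factorization actually proves the point and makes the Lie-theoretic structure transparent. One small caveat: an immersion alone does not guarantee the image is an embedded submanifold, but since the image is a subgroup this is enough for it to be a Lie subgroup in the usual (immersed) sense, which is all the statement requires.
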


\begin{corollary}
Let $\gamma :\mathbb{R\rightarrow R}^{2}$, $\gamma \left( t\right) =\left(
\cos t,\sin t\right) $ and $\delta :\mathbb{R\rightarrow R}^{2}$ $\delta
\left( s\right) =\left( \cos s,\sin s\right) $ be two circles centered at O $%
\left( a,b\in \mathbb{R}\right) .$ Then their tensor product is
2-dimensional Lie subgroup of $M_{t_{j}}^{\ast }.$
\end{corollary}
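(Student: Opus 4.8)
The plan is to obtain this Corollary as the $a=b=0$ specialization of the immediately preceding Proposition, which already establishes that for the two spirals $\gamma(t)=e^{at}(\cos t,\sin t)$ and $\delta(s)=e^{bs}(\cos s,\sin s)$ the tensor product $f=\gamma\otimes\delta$ is a $2$-dimensional Lie subgroup of $M_{t_j}$. Setting $a=b=0$ collapses both spirals to the unit circles $\gamma(t)=(\cos t,\sin t)$ and $\delta(s)=(\cos s,\sin s)$, and the tensor product rule (3) with $\alpha=-1,\beta=1$ yields
\[
f(t,s)=\left(\cos t\cos s,\ -\sin t\sin s,\ \cos t\sin s,\ \sin t\cos s\right).
\]
Because this is literally the $a=b=0$ slice of a family already proven to be a $2$-dimensional Lie subgroup of $M_{t_j}$, the subgroup and submanifold structure transfers verbatim, and the only additional point to verify is that the image now lands inside the unit-norm subgroup $M_{t_j}^{\ast}$.

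For that verification I would compute the norm directly from the Remark giving $N_x=g_{t_j}(x,x)$ on $M_{t_j}$. With $\alpha=-1,\beta=1$ the metric reduces to $g_{t_j}=dx_1^2+dx_2^2+dx_3^2+dx_4^2$, so
\[
g_{t_j}(f,f)=\cos^2 t\cos^2 s+\sin^2 t\sin^2 s+\cos^2 t\sin^2 s+\sin^2 t\cos^2 s.
\]
Grouping the terms gives $\cos^2 t(\cos^2 s+\sin^2 s)+\sin^2 t(\sin^2 s+\cos^2 s)=\cos^2 t+\sin^2 t=1$, so $N_f=1$ and hence $f(t,s)\subset M_{t_j}^{\ast}$.

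Finally I would assemble the conclusion: the preceding Proposition supplies closure under the generalized bicomplex product $\cdot$ and under inverses together with the immersed $2$-parameter surface structure, while the norm identity places the whole image inside $M_{t_j}^{\ast}$; combined with the Corollary that $M_{t_j}^{\ast}$ is a $2$-dimensional Lie subgroup of $M_{t_j}$, this exhibits $f(t,s)$ as a $2$-dimensional Lie subgroup of $M_{t_j}^{\ast}$. There is no genuine obstacle here, since the content is entirely the specialization plus the one-line trigonometric identity; the only thing to be careful about is confirming that the unit-circle hypotheses really correspond to the $a=b=0$ slice of the spiral family, so that the already-established subgroup property applies without change.
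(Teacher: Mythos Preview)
Your proposal is correct and follows essentially the same approach as the paper: the analogous corollary in subsection~5.1.1 is proved by taking $a=b=0$ in the preceding spiral proposition and observing that the resulting surface sits inside the unit-norm subgroup, and the present corollary (for which the paper gives no separate proof) is handled identically. Your explicit computation of $g_{t_j}(f,f)=1$ with $\alpha=-1,\beta=1$ is more detailed than what the paper supplies, but the underlying argument is the same specialization.
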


\begin{proposition}
Let $\gamma :\mathbb{R\rightarrow R}_{1}^{2}$, $\gamma \left( t\right)
=\left( \cos t,\sin t\right) $ and $\delta :\mathbb{R\rightarrow R}^{2}$ $%
\delta \left( s\right) =\left( \cos s,\sin s\right) $ be two circles
centered at O $\left( a,b\in \mathbb{R}\right) .$ Then, the left invariant
vector fields on tensor product surface $f\left( t,s\right) =\gamma \left(
t\right) \otimes \delta \left( s\right) $ are $X_{3}$ and $X_{4}$ which are
the left invariant vector fields on $M_{t_{j}}^{\ast }.$
\end{proposition}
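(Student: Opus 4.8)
The plan is to follow the template of the earlier proposition that identified $X_2$ and $X_4$ as the left invariant vector fields on the $M_{t_i}$ tensor product surface, now specializing the rule (3) to Case II, where $\alpha=-1$ and $\beta=1$. First I would substitute $\gamma_1=\cos t$, $\gamma_2=\sin t$, $\delta_1=\cos s$, $\delta_2=\sin s$, together with $\beta=1$, into (3) to obtain the explicit parametrization
\[
f(t,s)=\left(\cos t\cos s,\,-\sin t\sin s,\,\cos t\sin s,\,\sin t\cos s\right).
\]
A short check shows $f_1 f_2+\beta f_3 f_4=0$ and $f_1^2+f_2^2+f_3^2+f_4^2=1$, so every point of $f$ lies on $M_{t_j}^{\ast}$, and the unit element of this $2$-dimensional Lie subgroup is $e=f(0,0)=(1,0,0,0)$, which is the algebra identity $1$ of $C_{\alpha\beta}$. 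The tangent space at $e$ is then spanned by $u_1=\left.\partial f/\partial t\right|_e$ and $u_2=\left.\partial f/\partial s\right|_e$.

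Next I would differentiate the parametrization and evaluate at $(t,s)=(0,0)$. A direct computation gives $u_1=(0,0,0,1)$ and $u_2=(0,0,1,0)$; in the basis $\left\{1,i,j,ij\right\}$ these are precisely the generalized bicomplex units $ij$ and $j$. This identification is the one delicate point: the location of the single nonzero entry fixes which basis unit each velocity vector represents, so the sign pattern produced by the Case II specialization must be tracked carefully in order not to misname the two units.

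Finally, I would transport these vectors to an arbitrary point $g=x_1 1+x_2 i+x_3 j+x_4 ij$ of the surface by the left translation $L_g$. As established in the proof of the corresponding $M_{t_i}$ proposition, the differential $L_g^{\ast}$ acts on a tangent vector at the identity by left generalized bicomplex multiplication, so $L_g^{\ast}(u_1)=g\cdot ij$ and $L_g^{\ast}(u_2)=g\cdot j$. Evaluating the generalized bicomplex product with right factor $ij$ returns $(\alpha\beta x_4,-\beta x_3,-\alpha x_2,x_1)=X_4$, and with right factor $j$ returns $(-\beta x_3,-\beta x_4,x_1,x_2)=X_3$. These are exactly the left invariant vector fields of $M_{t_j}$ recorded earlier, and they generate the Lie algebra $sp\left\{X_3,X_4\right\}$ of $M_{t_j}^{\ast}$. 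Hence the left invariant vector fields on $f(t,s)$ are $X_3$ and $X_4$, as claimed. The whole argument is essentially a direct verification; the only real obstacle is consistent bookkeeping of the product rule and the sign conventions under the substitution $\alpha=-1$, $\beta=1$.
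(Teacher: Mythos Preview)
Your proof is correct and follows exactly the template the paper establishes in the $M_{t_i}$ case: identify the identity $e=f(0,0)$, compute the coordinate tangent vectors $u_1=\left.\partial f/\partial t\right|_{e}=ij$ and $u_2=\left.\partial f/\partial s\right|_{e}=j$, and then left-translate by $g$ via generalized bicomplex multiplication to obtain $X_4$ and $X_3$. The paper leaves this particular proposition without an explicit proof, but your argument is precisely the intended one, with the bookkeeping for the specialization $\alpha=-1$, $\beta=1$ carried out correctly.
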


\subsubsection{Case III $\protect \alpha =-1,\protect \beta =-1$}

\begin{proposition}
Let $\gamma :\mathbb{R\rightarrow R}_{1}^{2}$ $\left( +\text{ }-\right) $
and $\delta :\mathbb{R\rightarrow R}_{1}^{2}$ $\left( +\text{ }-\right) $ be
two hyperbolic spirals with the same parameter, i.e. $\gamma \left( t\right)
=e^{at}\left( \cosh t,\sinh t\right) $ and $\delta \left( t\right)
=e^{bt}\left( \cosh t,\sinh t\right) $ $\left( a,b\in \mathbb{R}\right) .$
Then their tensor product is a one parameter subgroup of Lie group $%
M_{t_{j}}.$
\end{proposition}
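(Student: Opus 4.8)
The plan is to follow the template of the preceding propositions: write $\varphi(t)=\gamma(t)\otimes\delta(t)$ explicitly from the tensor product rule (3) with $\alpha=\beta=-1$, confirm that $\varphi(t)$ lies on $M_{t_{j}}$, and then establish the one-parameter group law together with $\varphi(0)=1$ and $\varphi^{-1}(t)=\varphi(-t)$. Substituting $\gamma(t)=e^{at}\left(\cosh t,\sinh t\right)$ and $\delta(t)=e^{bt}\left(\cosh t,\sinh t\right)$ into (3) and using $-\beta=1$ yields
\begin{equation*}
\varphi(t)=e^{(a+b)t}\left(\cosh^{2}t,\ \sinh^{2}t,\ \cosh t\sinh t,\ \cosh t\sinh t\right).
\end{equation*}
A direct check gives $x_{1}x_{2}+\beta x_{3}x_{4}=e^{2(a+b)t}\left(\cosh^{2}t\sinh^{2}t-\cosh^{2}t\sinh^{2}t\right)=0$ and $g_{t_{j}}(\varphi,\varphi)=e^{2(a+b)t}\left(\cosh^{2}t-\sinh^{2}t\right)^{2}=e^{2(a+b)t}\neq0$, so $\varphi(t)\subset M_{t_{j}}$.

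The cleanest route to the group law is to observe that the tensor product rule (3) is itself a generalized bicomplex product. Setting $\Gamma=\gamma_{1}+\gamma_{2}\,ij$ and $\Delta=\delta_{1}+\delta_{2}\,j$, the multiplication relations $j^{2}=-\beta$, $(ij)\cdot j=i\,j^{2}=-\beta\,i$ and $ij=ji$ show that the four components of $\Gamma\cdot\Delta$ reproduce those of $f(t,s)$ in (3) verbatim. In the present case $(ij)^{2}=\alpha\beta=1$ and $j^{2}=-\beta=1$, so each factor is a split-hyperbolic exponential: whenever $u^{2}=1$ one has $\left(\cosh t+\sinh t\,u\right)\left(\cosh s+\sinh s\,u\right)=\cosh(t+s)+\sinh(t+s)\,u$. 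Applying this with $u=ij$ and $u=j$ gives $\Gamma(t_{1})\cdot\Gamma(t_{2})=\Gamma(t_{1}+t_{2})$ and $\Delta(t_{1})\cdot\Delta(t_{2})=\Delta(t_{1}+t_{2})$, where $\Gamma(t)=e^{at}\left(\cosh t+\sinh t\,ij\right)$ and $\Delta(t)=e^{bt}\left(\cosh t+\sinh t\,j\right)$.

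Finally, since $C_{\alpha\beta}$ is commutative I would interleave the two factors and conclude
\begin{equation*}
\varphi(t_{1})\cdot\varphi(t_{2})=\Gamma(t_{1})\Gamma(t_{2})\,\Delta(t_{1})\Delta(t_{2})=\Gamma(t_{1}+t_{2})\,\Delta(t_{1}+t_{2})=\varphi(t_{1}+t_{2}),
\end{equation*}
with identity $\varphi(0)=\Gamma(0)\Delta(0)=1$ and inverse $\varphi^{-1}(t)=\varphi(-t)$. Hence $\left(\varphi(t),\cdot\right)$ is a one-parameter Lie subgroup of $\left(M_{t_{j}},\cdot\right)$. The single real obstacle is spotting the factorization $\varphi=\Gamma\cdot\Delta$; after that the group law is forced by the two scalar exponential identities. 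If one prefers to mirror the earlier proofs verbatim, the identity $\varphi(t_{1})\cdot\varphi(t_{2})=\varphi(t_{1}+t_{2})$ may instead be obtained by expanding both sides with the generalized bicomplex product and collapsing the hyperbolic addition formulas for $\cosh(t_{1}+t_{2})$ and $\sinh(t_{1}+t_{2})$.
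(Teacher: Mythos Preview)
Your proof is correct. The explicit form of $\varphi(t)$ agrees with the paper's (up to a stray comma in the paper), your verification that $\varphi(t)\in M_{t_{j}}$ is accurate, and the factorization $\varphi=\Gamma\cdot\Delta$ with $\Gamma=\gamma_{1}+\gamma_{2}\,ij$, $\Delta=\delta_{1}+\delta_{2}\,j$ genuinely reproduces the tensor rule (3), since $(ij)j=-\beta\,i$.

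The paper's own proof is the bare-hands version you mention at the end: it writes out $\varphi(t)$ and then simply asserts that $\varphi(t_{1})\cdot\varphi(t_{2})=\varphi(t_{1}+t_{2})$ and $\varphi^{-1}(t)=\varphi(-t)$, leaving the verification to direct expansion with hyperbolic addition formulas. Your route is different and more structural: by recognizing that in this case $j^{2}=(ij)^{2}=1$, you turn both factors into split-hyperbolic exponentials whose one-parameter laws are immediate, and then use commutativity of $C_{\alpha\beta}$ to combine them. This buys you a uniform explanation that would work verbatim in every case of Sections~5.1--5.3 (choosing $u\in\{i,j,ij\}$ with $u^{2}=\pm1$ and trigonometric or hyperbolic exponentials accordingly), whereas the paper's approach requires redoing the coordinate computation in each instance. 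You also supply the check $x_{1}x_{2}+\beta x_{3}x_{4}=0$ and $g_{t_{j}}(\varphi,\varphi)\neq0$, which the paper omits.
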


\begin{proof}
We obtain
\begin{equation*}
\varphi \left( t\right) =\gamma \left( t\right) \otimes \delta \left(
t\right) =e^{(a+b)t}\left( \cosh ^{2}t,\sinh ^{2}t,\cosh t\sinh t,\cosh
t,\sinh t\right)
\end{equation*}%
It can be easily seen that
\begin{equation*}
\varphi \left( t_{1}\right) \cdot \varphi \left( t_{2}\right) =\varphi
\left( t_{1}+t_{2}\right)
\end{equation*}%
for all $t_{1},t_{2}.$ Also $\varphi ^{-1}\left( t\right) =\varphi \left(
-t\right) .$ Hence $\left( \varphi \left( t\right) ,\cdot \right) $ is a one
parameter Lie subgroup of Lie group $\left( M_{t_{j}},\cdot \right) .$
\end{proof}

\begin{corollary}
Let $\gamma :\mathbb{R\rightarrow R}_{1}^{2}$ and $\delta :\mathbb{%
R\rightarrow R}_{1}^{2}$ be two Lorentzian circles centered at O with the
same parameter, i.e., $\gamma \left( t\right) =\left( \cosh t,\sinh t\right)
$ and $\delta \left( t\right) =\left( \cosh t,\sinh t\right) .$ Then their
tensor product is a one parameter subgroup of Lie group $M_{t_{j}}^{\ast }.$
\end{corollary}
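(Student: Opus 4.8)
The plan is to specialize the preceding Proposition — the Case III result for two hyperbolic spirals — to the parameter values $a=b=0$, and then to verify that the resulting one-parameter family actually lies inside the \emph{unit} hyperquadric $M_{t_{j}}^{\ast}$ rather than merely inside $M_{t_{j}}$. Setting $a=b=0$ turns the hyperbolic spirals $\gamma(t)=e^{at}(\cosh t,\sinh t)$ and $\delta(t)=e^{bt}(\cosh t,\sinh t)$ into the Lorentzian circles $\gamma(t)=\delta(t)=(\cosh t,\sinh t)$, and the tensor product computed in that proof collapses to
\begin{equation*}
\varphi(t)=\gamma(t)\otimes\delta(t)=(\cosh^{2}t,\sinh^{2}t,\cosh t\sinh t,\cosh t\sinh t).
\end{equation*}

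First I would observe that the group-law identities $\varphi(t_{1})\cdot\varphi(t_{2})=\varphi(t_{1}+t_{2})$ and $\varphi^{-1}(t)=\varphi(-t)$ are inherited verbatim from the preceding Proposition, since they were established there for arbitrary $a,b$ and so hold in particular at $a=b=0$; no fresh computation of the group operation is required.

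The one genuinely new verification is that every $\varphi(t)$ is a unit element. For this I would invoke the norm formula $N_{x}=g_{t_{j}}(x,x)$ recorded in the Remark for $M_{t_{j}}$, together with the Case III values $\alpha=\beta=-1$, which reduce the quadratic form to $g_{t_{j}}(x,x)=x_{1}^{2}+x_{2}^{2}-x_{3}^{2}-x_{4}^{2}$. Substituting the components of $\varphi(t)$ yields
\begin{equation*}
g_{t_{j}}(\varphi(t),\varphi(t))=\cosh^{4}t+\sinh^{4}t-2\cosh^{2}t\sinh^{2}t=(\cosh^{2}t-\sinh^{2}t)^{2}=1,
\end{equation*}
so $\varphi(t)\in M_{t_{j}}^{\ast}$ for all $t$. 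In passing I would also note that $\varphi(t)$ meets the defining relation $x_{1}x_{2}+\beta x_{3}x_{4}=0$ of $M_{t_{j}}$ with $\beta=-1$, since $\cosh^{2}t\sinh^{2}t-\cosh^{2}t\sinh^{2}t=0$, confirming membership on the hyperquadric itself.

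Putting these together, $\varphi(t)$ is simultaneously a subgroup and a subset of $M_{t_{j}}^{\ast}$, hence a one-parameter subgroup of the Lie group $M_{t_{j}}^{\ast}$, as claimed. I anticipate no real obstacle here: the whole difficulty of the group law was already discharged in the preceding Proposition, and what remains rests solely on the single hyperbolic identity $\cosh^{2}t-\sinh^{2}t=1$.
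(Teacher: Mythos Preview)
Your proposal is correct and follows essentially the same approach as the paper: specialize the preceding Proposition to $a=b=0$ and verify that $g_{t_{j}}(\varphi(t),\varphi(t))=1$ so that the image lies in $M_{t_{j}}^{\ast}$. The paper's own argument (given explicitly for the analogous Corollary in Case~I of Subsection~5.1 and left implicit here) is the same two-step reduction, though it does not spell out the norm computation or the check of the defining relation $x_{1}x_{2}+\beta x_{3}x_{4}=0$ as carefully as you do.
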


\begin{proposition}
Let $\varphi \left( t\right) $ be tensor product of two Lorentzian circles
centered at O with the same parameter. Then the left invariant vector field
on $\varphi \left( t\right) $ is $X=X_{3}+X_{4},$ where $X_{3}$ and $X_{4}$
are left invariant vector fields on $M_{t_{j}}^{\ast }.$
\end{proposition}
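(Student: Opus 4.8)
The plan is to follow exactly the template used for the analogous one-parameter subgroups treated earlier in this subsection (and in the $M_{t_{i}}$ cases). First I would write down the explicit parametrization by applying the tensor product rule (3) with $\alpha=\beta=-1$ to $\gamma(t)=(\cosh t,\sinh t)$ and $\delta(t)=(\cosh t,\sinh t)$, which yields
\begin{equation*}
\varphi(t)=\left( \cosh ^{2}t,\sinh ^{2}t,\cosh t\sinh t,\cosh t\sinh t\right) .
\end{equation*}
Here one must read off the four components dictated by (3), correcting the evident typographical slip of five entries in the preceding proposition.

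Next I would evaluate at the identity: $\varphi(0)=(1,0,0,0)=e$, confirming that the curve passes through the unit element. Differentiating componentwise and using $(\cosh t\sinh t)^{\prime }=\cosh ^{2}t+\sinh ^{2}t$, I obtain $\varphi ^{\prime }(0)=(0,0,1,1)$. The central step is to read this tangent vector as a generalized bicomplex number, namely $X_{e}=\varphi ^{\prime }(0)=j+ij$.

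The left invariant vector field $X$ with $X_{e}=\varphi ^{\prime }(0)$ is then produced by left translation: for $g=x_{1}1+x_{2}i+x_{3}j+x_{4}ij$,
\begin{equation*}
X_{g}=L_{g}^{\ast }(X_{e})=g\cdot X_{e}=g\cdot (j+ij)=g\cdot j+g\cdot ij.
\end{equation*}
By the computations recorded in the Lie algebra theorem for $M_{t_{j}}$, one has $g\cdot j=X_{3}$ and $g\cdot ij=X_{4}$, these being exactly the defining identities $(X_{3})_{x}=x\cdot j$ and $(X_{4})_{x}=x\cdot ij$ specialized to $\alpha =\beta =-1$. Hence $X=X_{3}+X_{4}$, as claimed.

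I do not anticipate a genuine obstacle: once the parametrization is written correctly, the argument reduces to a single derivative evaluation at $t=0$ together with the linearity of left translation over the bicomplex product. The only point deserving care is the bookkeeping of signs in the product $g\cdot (j+ij)$ under the substitution $\alpha =\beta =-1$, and the recognition that the resulting components match precisely the already-established fields $X_{3}$ and $X_{4}$ rather than some other combination.
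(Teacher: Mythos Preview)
Your proposal is correct and follows precisely the template the paper establishes in the first fully proved instance (Proposition~3 in \S5.1.1): evaluate $\varphi(0)=e$, compute $\varphi'(0)$, interpret it as a generalized bicomplex number, and left translate by $g$ to recognize the result as a sum of the already-computed fields $X_{3}$ and $X_{4}$. The paper in fact leaves this particular proposition unproved, relying on exactly this pattern, so your argument is essentially what the authors intend.
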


\begin{proposition}
Let $\gamma :\mathbb{R\rightarrow R}_{1}^{2}$, $\gamma \left( t\right)
=e^{at}\left( \cosh t,\sinh t\right) $ and $\delta :\mathbb{R\rightarrow R}%
_{1}^{2}$ $\delta \left( s\right) =e^{bs}\left( \cosh s,\sinh s\right) $ be
two spirals $\left( a,b\in \mathbb{R}\right) .$ Then their tensor product is
2-dimensional Lie subgroup of $M_{t_{j}}.$

\begin{proof}
By using tensor product rule given by (3), we get%
\begin{equation*}
f\left( t,s\right) =e^{at+bs}\left( \cosh t\cosh s,\sinh t\sinh s,\cosh
t\sinh s,\sinh t\cosh s\right)
\end{equation*}%
Every point of $f\left( t,s\right) $\ is on the hyperquadric $M_{t_{j}}.$
Since $f\left( t,s\right) $ is both subgroup and submanifold of a Lie group $%
M_{t_{j}},$ it is a 2-dimensional Lie subgroup of $M_{t_{j}}.$
\end{proof}
\end{proposition}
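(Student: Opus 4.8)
The plan is to mirror the argument used for the analogous two-dimensional propositions proved above: I would first write out the tensor product surface explicitly from rule (3), verify that it lies entirely on the hyperquadric $M_{t_{j}}$, and then show it is simultaneously a subgroup and a two-dimensional immersed submanifold of the Lie group $M_{t_{j}}$, whence it is a two-dimensional Lie subgroup.

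First I would substitute $\gamma_{1}(t)=e^{at}\cosh t$, $\gamma_{2}(t)=e^{at}\sinh t$, $\delta_{1}(s)=e^{bs}\cosh s$, $\delta_{2}(s)=e^{bs}\sinh s$ together with $\beta=-1$ into rule (3) to obtain
\[
f(t,s)=e^{at+bs}\left(\cosh t\cosh s,\ \sinh t\sinh s,\ \cosh t\sinh s,\ \sinh t\cosh s\right).
\]
Inserting these components into the defining relation $x_{1}x_{2}+\beta x_{3}x_{4}=x_{1}x_{2}-x_{3}x_{4}$ gives $e^{2(at+bs)}\left(\cosh t\sinh t\cosh s\sinh s-\cosh t\sinh t\cosh s\sinh s\right)=0$, so every point of $f(t,s)$ lies on $M_{t_{j}}$.

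The key step is to make the group structure transparent. Since $\alpha=\beta=-1$ we have $(ij)^{2}=\alpha\beta=1$ and $j^{2}=-\beta=1$, so both $ij$ and $j$ are hyperbolic units. I would then recognize the factorization
\[
f(t,s)=e^{at+bs}\,(\cosh t+\sinh t\, ij)\,(\cosh s+\sinh s\, j),
\]
which one verifies by expanding and using $ij\cdot j=i\,j^{2}=i$. Because the generalized bicomplex algebra is commutative and the two factors obey the hyperbolic addition laws $(\cosh t_{1}+\sinh t_{1}\,ij)(\cosh t_{2}+\sinh t_{2}\,ij)=\cosh(t_{1}+t_{2})+\sinh(t_{1}+t_{2})\,ij$ (and likewise for the $j$-factor, using $j^{2}=1$), the product $f(t_{1},s_{1})\cdot f(t_{2},s_{2})$ collapses to $f(t_{1}+t_{2},s_{1}+s_{2})$. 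This yields closure; the identity is $f(0,0)=1$ and the inverse of $f(t,s)$ is $f(-t,-s)$, so the image is a subgroup.

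Finally, $(t,s)\mapsto f(t,s)$ is a smooth immersion of $\mathbb{R}^{2}$, so its image is a two-dimensional submanifold of $M_{t_{j}}$; being both a subgroup and a two-dimensional submanifold of the Lie group $M_{t_{j}}$, it is a two-dimensional Lie subgroup. I expect the only nonroutine point to be spotting the hyperbolic factorization above; once it is in hand, the group law follows from the addition formulas for $\cosh$ and $\sinh$ exactly as in the one-parameter case treated earlier.
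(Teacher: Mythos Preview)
Your proof is correct and follows the same overall outline as the paper: compute $f(t,s)$ from rule (3), observe that it lies on $M_{t_{j}}$, and conclude that it is a $2$-dimensional Lie subgroup because it is simultaneously a subgroup and a submanifold.

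Where you go beyond the paper is in actually \emph{justifying} the subgroup claim. The paper simply asserts that $f(t,s)$ is a subgroup of $M_{t_{j}}$ without computation; you instead exhibit the factorization
\[
f(t,s)=e^{at+bs}\,(\cosh t+\sinh t\, ij)\,(\cosh s+\sinh s\, j),
\]
valid because $j^{2}=(ij)^{2}=1$ when $\alpha=\beta=-1$, and then read off the group law $f(t_{1},s_{1})\cdot f(t_{2},s_{2})=f(t_{1}+t_{2},s_{1}+s_{2})$ from commutativity of $C_{\alpha\beta}$ and the hyperbolic addition formulas. This is a genuine improvement: it replaces an implicit four-component verification by a conceptual one-line argument, and it makes clear why the identity is $f(0,0)$ and the inverse is $f(-t,-s)$. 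The cost is only that one must check $ij\cdot j=i$, which you do. Both approaches are silent on the nondegeneracy condition $g_{t_{j}}(f,f)\neq 0$, but this follows immediately from $g_{t_{j}}(f,f)=e^{2(at+bs)}$ in this case.
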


\begin{corollary}
Let $\gamma :\mathbb{R\rightarrow R}_{1}^{2}$, $\gamma \left( t\right)
=\left( \cosh t,\sinh t\right) $ and $\delta :\mathbb{R\rightarrow R}%
_{1}^{2} $ $\delta \left( s\right) =\left( \cosh s,\sinh s\right) $ be two
Lorentzian circles centered at O $\left( a,b\in \mathbb{R}\right) .$ Then
their tensor product is 2-dimensional Lie subgroup of $M_{t_{j}}^{\ast }.$
\end{corollary}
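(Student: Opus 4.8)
The plan is to deduce this corollary directly from the preceding Proposition by specializing the spiral parameters to $a=b=0$, exactly as the analogous Case~I and Case~II corollaries of this subsection are obtained. That Proposition already establishes that for $\gamma(t)=e^{at}(\cosh t,\sinh t)$ and $\delta(s)=e^{bs}(\cosh s,\sinh s)$ the tensor product $f(t,s)$ is a $2$-dimensional Lie subgroup of $M_{t_{j}}$. First I would set $a=b=0$, which turns $\gamma$ and $\delta$ into the two Lorentzian circles in the statement, and by the tensor product rule (3) with $\beta=-1$ yields
\[
f(t,s)=\left( \cosh t\cosh s,\ \sinh t\sinh s,\ \cosh t\sinh s,\ \sinh t\cosh s\right).
\]

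The one point that genuinely needs checking is that this surface lies in the \emph{unit} hyperquadric $M_{t_{j}}^{\ast }$ and not merely in $M_{t_{j}}$. By the Remark following the definition of $M_{t_{j}}$, the norm is $N_{x}=g_{t_{j}}(x,x)=x_{1}^{2}-\alpha x_{2}^{2}+\beta x_{3}^{2}-\alpha \beta x_{4}^{2}$, which for $\alpha =\beta =-1$ reduces to $x_{1}^{2}+x_{2}^{2}-x_{3}^{2}-x_{4}^{2}$. Substituting the four components of $f(t,s)$ and factoring, I would obtain
\[
N_{f}=\left( \cosh ^{2}t-\sinh ^{2}t\right) \left( \cosh ^{2}s-\sinh ^{2}s\right) =1,
\]
using the identity $\cosh ^{2}-\sinh ^{2}=1$ in each variable separately. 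Hence $g_{t_{j}}(f,f)=1$, so that $f(t,s)\subset M_{t_{j}}^{\ast }$.

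Finally, since $M_{t_{j}}^{\ast }$ is itself the $2$-dimensional Lie subgroup of $M_{t_{j}}$ singled out earlier, and since $f(t,s)$ is simultaneously a subgroup and a submanifold of $M_{t_{j}}^{\ast }$, I would conclude that $f(t,s)$ is a $2$-dimensional Lie subgroup of $M_{t_{j}}^{\ast }$. There is no real obstacle in this argument: it is a pure specialization of the Proposition, and the only computation of substance is the norm evaluation above, whose difficulty is entirely absorbed by the hyperbolic Pythagorean identity. The sole thing to watch is the sign bookkeeping in $g_{t_{j}}$ when substituting $\alpha =\beta =-1$, so that the signature $(+\,+\,-\,-)$ is applied correctly to the components of $f$.
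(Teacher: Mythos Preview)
Your proposal is correct and follows exactly the approach the paper uses (implicitly here, and explicitly in the parallel Case~I result for $M_{t_i}^{\ast}$): specialize the preceding Proposition by setting $a=b=0$ and verify $g_{t_j}(f,f)=1$ so that the surface lands in $M_{t_j}^{\ast}$. Your sign bookkeeping for $g_{t_j}$ with $\alpha=\beta=-1$ and the factorization of the norm are both accurate.
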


\begin{proposition}
Let $\gamma :\mathbb{R\rightarrow R}_{1}^{2}$, $\gamma \left( t\right)
=\left( \cosh t,\sinh t\right) $ and $\delta :\mathbb{R\rightarrow R}%
_{1}^{2} $ $\delta \left( s\right) =\left( \cosh s,\sinh s\right) $ be two
Lorentzian circles centered at O $\left( a,b\in \mathbb{R}\right) .$ Then,
the left invariant vector fields on tensor product surface $f\left(
t,s\right) =\gamma \left( t\right) \otimes \delta \left( s\right) $ are $%
X_{3}$ and $X_{4}$ which are the left invariant vector fields on $%
M_{t_{j}}^{\ast }.$
\end{proposition}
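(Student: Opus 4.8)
The plan is to reproduce, in the $M_{t_j}$ setting with $\alpha=\beta=-1$, the computation already used for the corresponding statement on $M_{t_i}^{\ast}$. First I would make the surface explicit. Specializing the parametrization from the preceding proposition (the one showing that the tensor product of two Lorentzian circles is a $2$-dimensional subgroup) to $a=b=0$ gives
\[
f(t,s)=\left(\cosh t\cosh s,\ \sinh t\sinh s,\ \cosh t\sinh s,\ \sinh t\cosh s\right),
\]
whose image lies in $M_{t_j}^{\ast}$ because $g_{t_j}(f,f)=1$. The group identity is $e=f(0,0)=(1,0,0,0)=1$.

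Next I would compute the coordinate tangent vectors at the identity,
\[
u_1=\left.\frac{\partial f}{\partial t}\right|_{e}\qquad\text{and}\qquad u_2=\left.\frac{\partial f}{\partial s}\right|_{e}.
\]
Differentiating $f$ and evaluating at $t=s=0$ yields $u_1=(0,0,0,1)$ and $u_2=(0,0,1,0)$; in generalized bicomplex notation these are precisely the units $ij$ and $j$. Identifying the tangent vectors with the correct bicomplex units is the heart of the argument, and the signs produced by $\beta=-1$ in rule (3) must be tracked with care so that $u_1$ and $u_2$ come out as stated.

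Finally I would left-translate each tangent vector by an arbitrary element $g=x_1 1+x_2 i+x_3 j+x_4 ij$, using $L_g^{\ast}(u)=g\cdot u$ together with the generalized bicomplex product. A direct computation gives
\[
g\cdot ij=\left(\alpha\beta x_4,\ -\beta x_3,\ -\alpha x_2,\ x_1\right),\qquad g\cdot j=\left(-\beta x_3,\ -\beta x_4,\ x_1,\ x_2\right),
\]
which are exactly the left invariant vector fields $X_4$ and $X_3$ of the theorem describing the Lie algebra of $M_{t_j}$. Since $u_1,u_2$ span the tangent space of $M_{t_j}^{\ast}$ at the identity, the two left invariant vector fields of $f(t,s)$ coincide with $X_3$ and $X_4$, as claimed.

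The only genuine obstacle is bookkeeping: evaluating the hyperbolic derivatives at the origin and matching the resulting vectors to the units $j$ and $ij$. Once that identification is secured, the product formula supplies $X_3$ and $X_4$ automatically, with no idea beyond the $M_{t_i}$ template required.
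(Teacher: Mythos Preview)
Your proposal is correct and follows essentially the same approach as the paper: identify $e=f(0,0)$, compute $u_1=\left.\partial f/\partial t\right|_e$ and $u_2=\left.\partial f/\partial s\right|_e$, recognize them as the generalized bicomplex units $ij$ and $j$, and left-translate by $g$ to obtain $X_4$ and $X_3$. The paper does not spell out this particular proof but relies on the identical template given explicitly in the $M_{t_i}^{\ast}$ case, which is exactly what you reproduce here.
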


\subsection{Tensor Product Surfaces on $M_{t_{ij}}$ Hyperquadric and Some
Special Lie Subgroups}

In this subsection, we use the definition of tensor product given by Mihai.

Let $\gamma :\mathbb{R\rightarrow R}_{k}^{2}$ $\left( +\text{ }\beta \right)
$ and $\delta :\mathbb{R\rightarrow R}_{t}^{2}$ $\left( +\text{ }\alpha
\right) $ be planar curves in Euclidean or Lorentzian space. Put $\gamma
\left( t\right) =\left( \gamma _{1}\left( t\right) ,\gamma _{2}\left(
t\right) \right) $ and $\delta \left( s\right) =\left( \delta _{1}\left(
s\right) ,\delta _{2}\left( s\right) \right) .$%
\begin{equation*}
f=\gamma \otimes \delta :\mathbb{R}^{2}\rightarrow \mathbb{R}_{v}^{4}\text{
\  \ }\left( +\text{ }\alpha \text{ }\beta \text{ }\alpha \beta \right) ,
\end{equation*}%
\begin{equation}
f\left( t,s\right) =\left( \gamma _{1}\left( t\right) \delta _{1}\left(
s\right) ,\gamma _{1}\left( t\right) \delta _{2}\left( s\right) ,\gamma
_{2}\left( t\right) \delta _{1}\left( s\right) ,\gamma _{2}\left( t\right)
\delta _{2}\left( s\right) \right)
\end{equation}%
tensor product surface given by (5) is a surface on $M_{t_{ij}}$
hyperquadric. Tangent vector fields of $f\left( t,s\right) $ can be easily
computed as%
\begin{eqnarray}
\frac{\partial f}{\partial t} &=&\left( \gamma _{1}^{\prime }\left( t\right)
\delta _{1}\left( s\right) ,\gamma _{1}^{\prime }\left( t\right) \delta
_{2}\left( s\right) ,\gamma _{2}^{\prime }\left( t\right) \delta _{1}\left(
s\right) ,\gamma _{2}^{\prime }\left( t\right) \delta _{2}\left( s\right)
\right) \\
\frac{\partial f}{\partial s} &=&\left( \gamma _{1}\left( t\right) \delta
_{1}^{\prime }\left( s\right) ,\gamma _{1}\left( t\right) \delta
_{2}^{\prime }\left( s\right) ,\gamma _{2}\left( t\right) \delta
_{1}^{\prime }\left( s\right) ,\gamma _{2}\left( t\right) \delta
_{2}^{\prime }\left( s\right) \right)  \notag
\end{eqnarray}%
By using (5) and (6), we have
\begin{eqnarray*}
g_{11} &=&g\left( \frac{\partial f}{\partial t},\frac{\partial f}{\partial t}%
\right) =g_{1}(\gamma ^{\prime },\gamma ^{\prime })g_{2}\left( \delta
,\delta \right) \\
g_{12} &=&g\left( \frac{\partial f}{\partial t},\frac{\partial f}{\partial s}%
\right) =g_{1}(\gamma ,\gamma ^{\prime })g_{2}\left( \delta ,\delta ^{\prime
}\right) \\
g_{22} &=&g\left( \frac{\partial f}{\partial s},\frac{\partial f}{\partial s}%
\right) =g_{1}(\gamma ,\gamma )g_{2}\left( \delta ^{\prime },\delta ^{\prime
}\right)
\end{eqnarray*}%
where $g_{1}=dx_{1}^{2}+\beta dx_{2}^{2}$ and $g_{2}=dx_{1}^{2}+\alpha
dx_{2}^{2}$ are the metrics of $\mathbb{R}_{k}^{2}$ and $\mathbb{R}_{t}^{2}$%
, respectively. Consequently, an orthonormal basis for the tangent space of $%
f(t,s)$ is given by%
\begin{eqnarray*}
e_{1} &=&\frac{1}{\sqrt{\left \vert g_{11}\right \vert }}\frac{\partial f}{%
\partial t} \\
e_{2} &=&\frac{1}{\sqrt{\left \vert g_{11}\left(
g_{11}g_{22}-g_{12}^{2}\right) \right \vert }}\left( g_{11}\frac{\partial f}{%
\partial s}-g_{12}\frac{\partial f}{\partial t}\right)
\end{eqnarray*}

\begin{remark}
Tensor product surface given by (5) is a surface in $\mathbb{R}^{4}$ or $%
\mathbb{R}_{2}^{4}$ according to the case of $\alpha $ and $\beta .$ If we
take as $\alpha =\beta =1,$ we obtain a tensor product surface of two
Euclidean plane curves in $\mathbb{R}^{4}.$ If we take as $\alpha =1$, $%
\beta =-1,$ we obtain a tensor product surface of a Lorentzian plane curve
and a Euclidean plane curve in $\mathbb{R}_{2}^{4}.$ If we take as $\alpha
=-1$, $\beta =1,$ we obtain a tensor product surface of a Euclidean plane
curve and a Lorentzian plane curve in $\mathbb{R}_{2}^{4}.$ If we take as $%
\alpha =-1$, $\beta =-1$ we obtain a tensor product surface of two
Lorentzian plane curves in $\mathbb{R}_{2}^{4}.$
\end{remark}

Now we investigate Lie group structure of tensor product surfaces given by
the parametrization (5) in $\mathbb{R}^{4}$ or $\mathbb{R}_{2}^{4}$
according to above cases. Morever we obtain left invariant vector fields of
the tensor product surface that has the structure of Lie group.

\subsubsection{Case I $\protect \alpha =\protect \beta =1$}

\begin{proposition}
Let $\gamma :\mathbb{R\rightarrow R}^{2}$ $\left( +\text{ }+\right) $ and $%
\delta :\mathbb{R\rightarrow R}^{2}$ $\left( +\text{ }+\right) $ be two
spirals with the same parameter, i.e. $\gamma \left( t\right) =e^{at}\left(
\cos t,\sin t\right) $ and $\delta \left( t\right) =e^{bt}\left( \cos t,\sin
t\right) $ $\left( a,b\in \mathbb{R}\right) .$ Then their tensor product is
a one parameter subgroup of Lie group $M_{t_{ij}}.$
\end{proposition}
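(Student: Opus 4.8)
The plan is to follow the same template as the earlier one-parameter subgroup propositions: first write $\varphi(t)=\gamma(t)\otimes\delta(t)$ explicitly from the tensor product rule (5), then verify that $\varphi$ is a homomorphism from $(\mathbb{R},+)$ into $(M_{t_{ij}},\cdot)$. Substituting $\gamma_{1}=e^{at}\cos t$, $\gamma_{2}=e^{at}\sin t$, $\delta_{1}=e^{bt}\cos t$, $\delta_{2}=e^{bt}\sin t$ into (5) yields
\begin{equation*}
\varphi(t)=e^{(a+b)t}\left(\cos^{2}t,\cos t\sin t,\sin t\cos t,\sin^{2}t\right).
\end{equation*}
First I would confirm that $\varphi(t)$ really lies on $M_{t_{ij}}$: its coordinates satisfy the defining relation $x_{1}x_{4}-x_{2}x_{3}=e^{2(a+b)t}\left(\cos^{2}t\sin^{2}t-\cos t\sin t\,\sin t\cos t\right)=0$, so $\varphi(t)\in M_{t_{ij}}$ for every $t$.

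The conceptual key, which makes the homomorphism property transparent, is to read the tensor product (5) as a single generalized bicomplex product. Since here $\alpha=\beta=1$ we have $i^{2}=j^{2}=-1$, and a direct check against the generalized bicomplex product shows that the embeddings $\gamma\mapsto\gamma_{1}+\gamma_{2}j$ and $\delta\mapsto\delta_{1}+\delta_{2}i$ convert (5) into the product $(\gamma_{1}+\gamma_{2}j)\cdot(\delta_{1}+\delta_{2}i)$. Under this identification $\gamma(t)$ becomes $e^{at}(\cos t+j\sin t)=e^{at}e^{jt}$ and $\delta(t)$ becomes $e^{bt}(\cos t+i\sin t)=e^{bt}e^{it}$, so that
\begin{equation*}
\varphi(t)=e^{(a+b)t}\,e^{jt}\,e^{it}.
\end{equation*}

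With this factorization the addition law is immediate. Because $1,i,j,ij$ all commute, the one-parameter families $e^{it}$ and $e^{jt}$ commute with one another and each obeys the usual exponential rule $e^{it_{1}}e^{it_{2}}=e^{i(t_{1}+t_{2})}$ and $e^{jt_{1}}e^{jt_{2}}=e^{j(t_{1}+t_{2})}$. Hence
\begin{equation*}
\varphi(t_{1})\cdot\varphi(t_{2})=e^{(a+b)(t_{1}+t_{2})}e^{j(t_{1}+t_{2})}e^{i(t_{1}+t_{2})}=\varphi(t_{1}+t_{2}),
\end{equation*}
so $\varphi$ is a homomorphism; in particular $\varphi(0)=1$ is the identity and $\varphi(t)^{-1}=\varphi(-t)$. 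Since $\varphi(\mathbb{R})\subset M_{t_{ij}}$ and $M_{t_{ij}}$ is a Lie group by the earlier theorem, $\left(\varphi(t),\cdot\right)$ is a one parameter Lie subgroup of $M_{t_{ij}}$, which is the claim.

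I expect the only genuine subtlety to be the commutativity bookkeeping in the second displayed product: one must justify rearranging the four factors $e^{jt_{1}},e^{it_{1}},e^{jt_{2}},e^{it_{2}}$ past one another, which rests on $ij=ji$ together with the fact that the powers of $i$ and of $j$ lie in commuting complex subalgebras. Everything else is a routine substitution. As an alternative to the exponential shortcut, one could verify $\varphi(t_{1})\cdot\varphi(t_{2})=\varphi(t_{1}+t_{2})$ by brute force directly from the component formula of the generalized bicomplex product, using the product-to-sum identities for $\cos$ and $\sin$; the exponential factorization merely organizes that computation.
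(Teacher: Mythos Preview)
Your proof is correct and follows the paper's template: compute $\varphi(t)=\gamma(t)\otimes\delta(t)$, verify $\varphi(t_{1})\cdot\varphi(t_{2})=\varphi(t_{1}+t_{2})$ and $\varphi(t)^{-1}=\varphi(-t)$, and conclude. The paper's own proof is just this skeleton: it writes $\varphi(t)=e^{(a+b)t}(\cos^{2}t,\cos t\sin t,\cos t\sin t,\sin^{2}t)$ and then simply asserts that the addition law ``can be easily seen,'' leaving the actual verification (by product-to-sum identities in the component formula) implicit. Your exponential factorization $\varphi(t)=e^{(a+b)t}e^{jt}e^{it}$, obtained by recognizing that the tensor product rule (5) is literally the bicomplex product $(\gamma_{1}+\gamma_{2}j)(\delta_{1}+\delta_{2}i)$, is a genuine addition over the paper: it explains \emph{why} the homomorphism holds (commutativity of $i$ and $j$ plus the exponential law in each complex subalgebra) rather than relegating it to a brute-force check. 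You also verify $x_{1}x_{4}-x_{2}x_{3}=0$ explicitly, which the paper omits. So the route is the same in outline, but your argument is more informative where the paper is silent.
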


\begin{proof}
We obtain
\begin{equation*}
\varphi \left( t\right) =\gamma \left( t\right) \otimes \delta \left(
t\right) =e^{(a+b)t}\left( \cos ^{2}t,\cos t\sin t,\cos t\sin t,\sin
^{2}t\right)
\end{equation*}%
It can be easily seen that
\begin{equation*}
\varphi \left( t_{1}\right) \cdot \varphi \left( t_{2}\right) =\varphi
\left( t_{1}+t_{2}\right)
\end{equation*}%
for all $t_{1},t_{2}.$ Also $\varphi ^{-1}\left( t\right) =\varphi \left(
-t\right) .$ Hence $\left( \varphi \left( t\right) ,\cdot \right) $ is a one
parameter Lie subgroup of Lie group $\left( M_{t_{ij}},\cdot \right) .$
\end{proof}

\begin{corollary}
Let $\gamma :\mathbb{R\rightarrow R}^{2}$ and $\delta :\mathbb{R\rightarrow R%
}^{2}$ be two circles centered at O with the same parameter, i.e., $\gamma
\left( t\right) =\left( \cos t,\sin t\right) $ and $\delta \left( t\right)
=\left( \cos t,\sin t\right) .$ Then their tensor product is a one parameter
subgroup of Lie group $M_{t_{ij}}^{\ast }.$
\end{corollary}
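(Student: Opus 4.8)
The plan is to derive this corollary directly from the preceding Proposition (the $\alpha=\beta=1$ case on $M_{t_{ij}}$ with general spirals) by specializing the growth rates to $a=b=0$, and then to verify the single extra fact that the resulting curve lands in the unit hyperquadric $M_{t_{ij}}^{\ast}$. First I would set $a=b=0$ in that Proposition, so that $\gamma(t)=(\cos t,\sin t)$ and $\delta(t)=(\cos t,\sin t)$ become circles centered at $O$, and the tensor product formula (5) collapses to
\begin{equation*}
\varphi(t)=\gamma(t)\otimes\delta(t)=\left(\cos^{2}t,\ \cos t\sin t,\ \cos t\sin t,\ \sin^{2}t\right).
\end{equation*}
From the Proposition I may already assume that $\varphi(t_{1})\cdot\varphi(t_{2})=\varphi(t_{1}+t_{2})$ and $\varphi^{-1}(t)=\varphi(-t)$, so that $\varphi$ is a one-parameter subgroup of the ambient Lie group $M_{t_{ij}}$; none of that group-theoretic content needs to be re-proved here.

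The only genuinely new step is the norm computation, and this is where the substance of the corollary lies. Using Remark~3, the norm on $M_{t_{ij}}$ is $N_{x}=x\cdot x^{t_{ij}}=g_{t_{ij}}(x,x)$, and with $\alpha=\beta=1$ the metric is $g_{t_{ij}}=dx_{1}^{2}+dx_{2}^{2}+dx_{3}^{2}+dx_{4}^{2}$. Evaluating at $\varphi(t)$ I would compute
\begin{equation*}
g_{t_{ij}}\left(\varphi(t),\varphi(t)\right)=\cos^{4}t+\cos^{2}t\sin^{2}t+\cos^{2}t\sin^{2}t+\sin^{4}t=\left(\cos^{2}t+\sin^{2}t\right)^{2}=1.
\end{equation*}
This shows $\varphi(t)\in M_{t_{ij}}^{\ast}$ for every $t$, i.e. the image of the one-parameter subgroup is entirely contained in the unit hyperquadric.

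Finally I would invoke the Corollary stating that $M_{t_{ij}}^{\ast}$ is a (2-dimensional) Lie subgroup of $M_{t_{ij}}$. Since $\varphi$ is a one-parameter subgroup of $M_{t_{ij}}$ whose image lies in the subgroup $M_{t_{ij}}^{\ast}$, it is automatically a one-parameter subgroup of $M_{t_{ij}}^{\ast}$, which is exactly the claim. I do not expect any real obstacle: the group-law verification is inherited wholesale from the previous Proposition, and the containment in $M_{t_{ij}}^{\ast}$ reduces to the elementary Pythagorean identity $\cos^{2}t+\sin^{2}t=1$. The mild point to be careful about is simply citing the correct earlier results (the specialized Proposition for the group structure and Remark~3 together with the definition of $M_{t_{ij}}^{\ast}$ for the norm), so that the argument reads as a genuine specialization rather than a repetition.
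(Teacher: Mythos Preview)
Your proposal is correct and follows essentially the same approach as the paper: specialize the preceding spiral Proposition to $a=b=0$ and then check that $g_{t_{ij}}(\varphi(t),\varphi(t))=1$ so that the image lies in $M_{t_{ij}}^{\ast}$. The paper gives no separate proof for this particular corollary, but the analogous corollary in subsection~5.1 is proved in exactly this way, and your argument simply makes the norm computation explicit.
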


\begin{proposition}
Let $\varphi \left( t\right) $ be tensor product of two circles centered at
O with the same parameter. Then the left invariant vector field on $\varphi
\left( t\right) $ is $X=X_{2}+X_{3},$ where $X_{2}$ and $X_{3}$ are left
invariant vector fields on $M_{t_{ij}}^{\ast }.$
\end{proposition}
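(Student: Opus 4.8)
The plan is to mirror the template used for the corresponding statement on $M_{t_i}$ (the proposition giving $X=X_2+X_4$): write down $\varphi$ explicitly, read off its velocity at the identity, and left-translate that tangent vector across the group.

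First I would specialize the preceding proposition by taking $a=b=0$, so that $\varphi(t)=\gamma(t)\otimes\delta(t)$ with $\gamma(t)=\delta(t)=(\cos t,\sin t)$. Applying the tensor product rule (5) gives
\begin{equation*}
\varphi(t)=\left(\cos^2 t,\ \cos t\sin t,\ \cos t\sin t,\ \sin^2 t\right).
\end{equation*}
Evaluating at $t=0$ produces $\varphi(0)=(1,0,0,0)=e$, the identity of $M_{t_{ij}}^{\ast}$, and differentiating componentwise and setting $t=0$ yields the tangent vector at the identity
\begin{equation*}
\varphi'(0)=(0,1,1,0)=X_e,
\end{equation*}
which corresponds to the generalized bicomplex number $i+j$.

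Next I would extend $X_e$ to a left invariant vector field exactly as in the earlier proofs, by applying the differential $L_g^{\ast}$ of left translation. For a generic point $g=x_1 1+x_2 i+x_3 j+x_4 ij$ this means evaluating the generalized bicomplex product
\begin{equation*}
L_g^{\ast}(X_e)=g\cdot X_e=\left(x_1 1+x_2 i+x_3 j+x_4 ij\right)\cdot(i+j).
\end{equation*}
Carrying out this product with $\alpha=\beta=1$ through the multiplication formula of the generalized bicomplex product, the outcome should be $\left(-x_2-x_3,\ x_1-x_4,\ x_1-x_4,\ x_2+x_3\right)$.

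Finally I would compare this against the left invariant vector fields recorded in the Lie algebra theorem for $M_{t_{ij}}$, namely the $\alpha=\beta=1$ specializations $X_2=(-x_2,x_1,-x_4,x_3)$ and $X_3=(-x_3,-x_4,x_1,x_2)$. Their sum is precisely $g\cdot(i+j)$, establishing $X=X_2+X_3$. The only step demanding genuine care is the sign bookkeeping in the generalized product---one must correctly track the mixed terms such as $-\beta x_3 y_4$ and $-\alpha x_4 y_2$ even though they collapse when $\alpha=\beta=1$---and this verification, while routine, is the single place where an error could creep in.
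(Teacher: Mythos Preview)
Your proposal is correct and follows the same template the paper uses for the analogous result on $M_{t_i}^{\ast}$: write $\varphi(t)$ explicitly, read off $\varphi(0)=e$ and $\varphi'(0)=i+j$, and left-translate via $L_g^{\ast}(X_e)=g\cdot(i+j)=X_2+X_3$. The paper in fact omits the proof for this particular proposition, relying on the earlier pattern, so your write-up is if anything more complete than what appears there.
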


\begin{proposition}
Let $\gamma :\mathbb{R\rightarrow R}^{2}$, $\gamma \left( t\right)
=e^{at}\left( \cos t,\sin t\right) $ and $\delta :\mathbb{R\rightarrow R}%
^{2} $ $\delta \left( s\right) =e^{bs}\left( \cos s,\sin s\right) $ be two
spirals $\left( a,b\in \mathbb{R}\right) .$ Then their tensor product is
2-dimensional Lie subgroup of $M_{t_{ij}}.$

\begin{proof}
By using tensor product rule given by (5), we get%
\begin{equation*}
f\left( t,s\right) =e^{at+bs}\left( \cos t\cos s,\cos t\sin s,\sin t\cos
s,\sin t\sin s\right)
\end{equation*}%
Every point of $f\left( t,s\right) $\ is on the hyperquadric $M_{t_{ij}}.$
Since $f\left( t,s\right) $ is both subgroup and submanifold of a Lie group $%
M_{t_{ij}},$ it is a 2-dimensional Lie subgroup of $M_{t_{ij}}.$
\end{proof}
\end{proposition}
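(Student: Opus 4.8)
The plan is to follow the template already used for the $M_{t_{i}}$ and $M_{t_{j}}$ propositions: first write out $f(t,s)$ explicitly from the product rule (5), then verify in turn that (a) the image lies on the hyperquadric $M_{t_{ij}}$, (b) $f$ is a homomorphism from $(\mathbb{R}^{2},+)$ into $(M_{t_{ij}},\cdot)$ so that its image is a subgroup, and (c) $f$ is an immersion so that the image is a $2$-dimensional submanifold. Combining (b) and (c) with the fact (proved earlier) that $M_{t_{ij}}$ is a Lie group then gives the $2$-dimensional Lie subgroup claim. Substituting $\gamma_{1}=e^{at}\cos t$, $\gamma_{2}=e^{at}\sin t$, $\delta_{1}=e^{bs}\cos s$, $\delta_{2}=e^{bs}\sin s$ into (5) yields
\begin{equation*}
f(t,s)=e^{at+bs}\left( \cos t\cos s,\ \cos t\sin s,\ \sin t\cos s,\ \sin t\sin s\right) .
\end{equation*}

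Step (a) is then immediate. Writing $x_{1}=e^{at+bs}\cos t\cos s$, $x_{2}=e^{at+bs}\cos t\sin s$, $x_{3}=e^{at+bs}\sin t\cos s$, $x_{4}=e^{at+bs}\sin t\sin s$, the defining quantity collapses as $x_{1}x_{4}-x_{2}x_{3}=e^{2(at+bs)}\left( \cos t\cos s\sin t\sin s-\cos t\sin s\sin t\cos s\right) =0$, so every point of $f$ satisfies the constraint of $M_{t_{ij}}$.

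The conceptual heart is step (b). The observation I would exploit is that, for $\alpha =\beta =1$, the product rule (5) is exactly the generalized bicomplex product of the two planar curves embedded in complementary subalgebras: writing $\gamma (t)=\gamma_{1}+\gamma_{2}j$ and $\delta (s)=\delta_{1}+\delta_{2}i$, a direct check against the product rule shows $f(t,s)=\gamma (t)\cdot \delta (s)$. Since $i^{2}=-1$ and $j^{2}=-1$ here, both curves are genuine exponentials in their respective planes, $\gamma (t)=e^{(a+j)t}$ and $\delta (s)=e^{(b+i)s}$; because $C_{\alpha \beta }$ is commutative (evident from the symmetry of the product formula) and associative, the exponential law gives
\begin{equation*}
f(t_{1},s_{1})\cdot f(t_{2},s_{2})=e^{(a+j)t_{1}}e^{(b+i)s_{1}}e^{(a+j)t_{2}}e^{(b+i)s_{2}}=e^{(a+j)(t_{1}+t_{2})}e^{(b+i)(s_{1}+s_{2})}=f(t_{1}+t_{2},s_{1}+s_{2}).
\end{equation*}
Thus $f$ is a homomorphism; together with $f(0,0)=(1,0,0,0)$, the identity of $M_{t_{ij}}$, and $f(t,s)^{-1}=f(-t,-s)$, this makes the image a subgroup. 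If one prefers to avoid the exponential language, the same identity follows by expanding the bicomplex product of the two copies of the explicit formula and applying the angle-addition identities for $\cos $ and $\sin $.

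Finally, for step (c) I would read off $\partial f/\partial t$ and $\partial f/\partial s$ from (6) and check that they are linearly independent for generic $(t,s)$, so that $f$ is an immersion and its image is a regular $2$-dimensional submanifold of $M_{t_{ij}}$; a subset of a Lie group that is simultaneously a subgroup and a submanifold is a Lie subgroup, which gives the conclusion. I expect step (b) to be the only genuine obstacle: the cancellation in (a) and the rank check in (c) are routine, whereas the subgroup property is what forces the particular choice of spirals, and recognizing the tensor product surface as a bicomplex product of exponentials is the insight that renders it transparent rather than a brute-force trigonometric verification.
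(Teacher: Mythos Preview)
Your proof is correct and follows the same three-step template the paper uses (show the image lies on $M_{t_{ij}}$, show it is a subgroup, show it is a submanifold), but you supply the details the paper omits: the paper merely asserts that $f(t,s)$ is a subgroup and a submanifold without verification. Your additional observation that $f(t,s)=e^{(a+j)t}e^{(b+i)s}$ in the commutative algebra $C_{\alpha\beta}$, so that the homomorphism identity $f(t_1,s_1)\cdot f(t_2,s_2)=f(t_1+t_2,s_1+s_2)$ is immediate from the exponential law, is a genuine insight not present in the paper and makes step (b) transparent rather than a trigonometric exercise. One small sharpening: since $\partial f/\partial t=(a+j)f$ and $\partial f/\partial s=(b+i)f$ with $f$ invertible and $a+j,\ b+i$ linearly independent in $\mathbb{R}^{4}$, the immersion in step (c) actually holds for \emph{all} $(t,s)$, not just generically.
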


\begin{corollary}
Let $\gamma :\mathbb{R\rightarrow R}^{2}$, $\gamma \left( t\right) =\left(
\cos t,\sin t\right) $ and $\delta :\mathbb{R\rightarrow R}^{2}$ $\delta
\left( s\right) =\left( \cos s,\sin s\right) $ be two circles centered at O $%
\left( a,b\in \mathbb{R}\right) .$ Then their tensor product is
2-dimensional Lie subgroup of $M_{t_{ij}}^{\ast }.$
\end{corollary}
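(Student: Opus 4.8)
The plan is to derive this corollary directly from the Proposition immediately preceding it, of which it is simply the $a=b=0$ specialization. That Proposition already shows that the tensor product of the spirals $\gamma(t)=e^{at}(\cos t,\sin t)$ and $\delta(s)=e^{bs}(\cos s,\sin s)$ is a $2$-dimensional Lie subgroup of $M_{t_{ij}}$; taking $a=b=0$ replaces the spirals by the unit circles appearing in the statement, so the only genuinely new point to check is that the surface now lies inside the unit hyperquadric $M_{t_{ij}}^{\ast}$ and not merely inside $M_{t_{ij}}$.

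First I would write the surface explicitly. Specializing the formula obtained in the preceding Proposition to $a=b=0$ gives
\begin{equation*}
f(t,s)=\left(\cos t\cos s,\ \cos t\sin s,\ \sin t\cos s,\ \sin t\sin s\right).
\end{equation*}
The group-closure and submanifold properties, hence the $2$-dimensional Lie subgroup structure inside $M_{t_{ij}}$, are inherited verbatim from that Proposition, so nothing there needs to be redone.

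Next I would verify the defining unit condition of $M_{t_{ij}}^{\ast}$. In Case I we have $\alpha=\beta=1$, so by the definition of $g_{t_{ij}}$ the metric reduces to $g_{t_{ij}}=dx_1^2+dx_2^2+dx_3^2+dx_4^2$, and a direct computation gives
\begin{equation*}
g_{t_{ij}}(f,f)=\cos^2 t\left(\cos^2 s+\sin^2 s\right)+\sin^2 t\left(\cos^2 s+\sin^2 s\right)=\cos^2 t+\sin^2 t=1.
\end{equation*}
Hence every point of $f(t,s)$ satisfies $g_{t_{ij}}(x,x)=1$, i.e. $f(t,s)\subset M_{t_{ij}}^{\ast}$. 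Since $f(t,s)$ is already a subgroup of $M_{t_{ij}}$ and its image lies entirely in the Lie subgroup $M_{t_{ij}}^{\ast}$, it is a subgroup of $M_{t_{ij}}^{\ast}$ as well, and being a $2$-dimensional submanifold it is a $2$-dimensional Lie subgroup of $M_{t_{ij}}^{\ast}$. This mirrors exactly the analogous already-proved statement in the $M_{t_i}$ subsection, where the $a=b=0$ reduction together with the unit-norm check produced membership in the starred hyperquadric.

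I expect no serious obstacle here: the entire content collapses to the Pythagorean computation $g_{t_{ij}}(f,f)=1$. The only conceptual point worth noting, though it is automatic from the pointwise containment above, is that the subgroup closure established in $M_{t_{ij}}$ is compatible with remaining inside $M_{t_{ij}}^{\ast}$; this holds because the generalized bicomplex product is commutative and the conjugation is multiplicative, $(p\cdot q)^{t_{ij}}=p^{t_{ij}}\cdot q^{t_{ij}}$, so that $N_{p\cdot q}=N_p N_q$ and products of unit elements remain unit elements.
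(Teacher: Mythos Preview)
Your proposal is correct and follows essentially the same approach as the paper: the paper does not write out a separate proof for this corollary, but the analogous statement in the $M_{t_i}$ subsection is proved precisely by taking $a=b=0$ in the preceding Proposition and observing that the resulting surface lies in the starred hyperquadric, which is exactly what you do. Your explicit verification that $g_{t_{ij}}(f,f)=1$ and the remark on multiplicativity of the norm simply make explicit what the paper leaves to the reader.
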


\begin{proposition}
Let $\gamma :\mathbb{R\rightarrow R}_{1}^{2}$, $\gamma \left( t\right)
=\left( \cos t,\sin t\right) $ and $\delta :\mathbb{R\rightarrow R}^{2}$ $%
\delta \left( s\right) =\left( \cos s,\sin s\right) $ be two circles
centered at O $\left( a,b\in \mathbb{R}\right) .$ Then, the left invariant
vector fields on tensor product surface $f\left( t,s\right) =\gamma \left(
t\right) \otimes \delta \left( s\right) $ are $X_{2}$ and $X_{3}$ which are
the left invariant vector fields on $M_{t_{ij}}^{\ast }.$
\end{proposition}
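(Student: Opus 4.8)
The plan is to follow exactly the strategy of the preceding propositions that compute left invariant vector fields, now specializing the tensor product rule (5) to the two circles. First I would write the surface out explicitly: with $\gamma(t)=(\cos t,\sin t)$ and $\delta(s)=(\cos s,\sin s)$, formula (5) gives
$f(t,s)=(\cos t\cos s,\cos t\sin s,\sin t\cos s,\sin t\sin s)$,
and the identity element of this $2$-dimensional Lie subgroup is $e=f(0,0)=(1,0,0,0)$, i.e.\ the unit $1$ of $C_{\alpha\beta}$.

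Next I would compute the two coordinate tangent vectors at $e$. Differentiating and evaluating at the origin yields $u_{1}=\left.\frac{\partial f}{\partial t}\right|_{(0,0)}=(0,0,1,0)$ and $u_{2}=\left.\frac{\partial f}{\partial s}\right|_{(0,0)}=(0,1,0,0)$. The key observation is that, under the identification $\mathbb{R}^{4}\cong C_{\alpha\beta}$, these tangent vectors are precisely the generalized bicomplex units $u_{1}=j$ and $u_{2}=i$.

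The main step is to push these basis vectors forward by the differential of left translation. As in the earlier proofs, the left invariant vector field determined by a tangent vector $u$ at $e$ is $L_{g}^{\ast}(u)=g\cdot u$ with $g=x_{1}1+x_{2}i+x_{3}j+x_{4}ij$, evaluated through the generalized bicomplex product formula with $\alpha=\beta=1$. Carrying out $x\cdot j$ reproduces exactly the field $X_{3}=(-\beta x_{3},-\beta x_{4},x_{1},x_{2})=(-x_{3},-x_{4},x_{1},x_{2})$, while $x\cdot i$ reproduces $X_{2}=(-\alpha x_{2},x_{1},-\alpha x_{4},x_{3})=(-x_{2},x_{1},-x_{4},x_{3})$. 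Since the corollary giving the Lie algebra of $M_{t_{ij}}^{\ast}$ states that it is $sp\{X_{2},X_{3}\}$, these are indeed the left invariant vector fields on $M_{t_{ij}}^{\ast}$, which finishes the argument.

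I do not expect a genuine obstacle: the whole content is the routine product computation of $x\cdot i$ and $x\cdot j$ from the multiplication formula. The only point requiring care is the bookkeeping, namely correctly matching which coordinate direction ($\partial_{t}$ versus $\partial_{s}$) corresponds to which imaginary unit, and hence to $X_{3}$ versus $X_{2}$; interchanging them would misstate the correspondence even though the spanned subalgebra would be unchanged.
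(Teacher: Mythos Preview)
Your proposal is correct and follows exactly the template the paper establishes in the proof of Proposition~6: identify $e=f(0,0)$, compute the coordinate tangent vectors $u_1=\partial f/\partial t|_e=j$ and $u_2=\partial f/\partial s|_e=i$, then left-translate via $L_g^{\ast}(u)=g\cdot u$ to recover $X_3$ and $X_2$ respectively. The paper in fact omits the proof for this particular proposition, but your argument is precisely the one it intends the reader to carry out.
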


\subsubsection{Case II $\protect \alpha =1,\protect \beta =-1$}

\begin{proposition}
Let $\gamma :\mathbb{R\rightarrow R}_{1}^{2}$ $\left( +\text{ }-\right) $ be
a hyperbolic spiral and $\delta :\mathbb{R\rightarrow R}^{2}$ $\left( +\text{
}+\right) $ be a spiral with the same parameter, i.e. $\gamma \left(
t\right) =e^{at}\left( \cosh t,\sinh t\right) $ and $\delta \left( t\right)
=e^{bt}\left( \cos t,\sin t\right) $ $\left( a,b\in \mathbb{R}\right) .$
Their tensor product is a one parameter subgroup of Lie group $M_{t_{ij}}.$
\end{proposition}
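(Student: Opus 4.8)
The plan is to compute the tensor product $\varphi(t)=\gamma(t)\otimes\delta(t)$ explicitly through the rule (5), verify that it lies on the hyperquadric $M_{t_{ij}}$, and then establish the one-parameter subgroup conditions $\varphi(t_1)\cdot\varphi(t_2)=\varphi(t_1+t_2)$ and $\varphi^{-1}(t)=\varphi(-t)$.

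First I would substitute the coordinate functions $\gamma_1(t)=e^{at}\cosh t$, $\gamma_2(t)=e^{at}\sinh t$, $\delta_1(t)=e^{bt}\cos t$, $\delta_2(t)=e^{bt}\sin t$ into (5) to obtain
\begin{equation*}
\varphi(t)=e^{(a+b)t}\left(\cosh t\cos t,\ \cosh t\sin t,\ \sinh t\cos t,\ \sinh t\sin t\right).
\end{equation*}
A single line then confirms membership in $M_{t_{ij}}$: the defining relation reads $x_1x_4-x_2x_3=e^{2(a+b)t}(\cosh t\sinh t\cos t\sin t-\cosh t\sinh t\sin t\cos t)=0$, while the norm $N_{\varphi(t)}=x_1^2+x_2^2-x_3^2-x_4^2=e^{2(a+b)t}(\cosh^2 t-\sinh^2 t)=e^{2(a+b)t}$ never vanishes, so $\varphi(t)$ stays off the null cone.

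For the group law I would avoid the head-on computation and instead recognize $\varphi(t)$ as an exponential. Viewing the point as a generalized bicomplex number and factoring yields
\begin{equation*}
\varphi(t)=e^{(a+b)t}\left(\cos t+i\sin t\right)\left(\cosh t+j\sinh t\right).
\end{equation*}
Since this is Case II we have $\alpha=1$ and $\beta=-1$, so $i^2=-1$ and $j^2=1$; hence the two bracketed factors are exactly $e^{it}$ and $e^{jt}$ in the generalized bicomplex algebra, and $\varphi(t)=e^{((a+b)+i+j)t}$. Because $i$ and $j$ commute, these exponentials add their exponents, so $\varphi(t_1)\cdot\varphi(t_2)=\varphi(t_1+t_2)$ follows at once, with $\varphi(-t)$ the evident inverse. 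This shows $(\varphi(t),\cdot)$ is a one-parameter Lie subgroup of $(M_{t_{ij}},\cdot)$.

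I expect the only real obstacle to be the verification of the product law. Carried out directly from the $\alpha=1,\ \beta=-1$ specialization of the generalized bicomplex product, it requires expanding all four components and repeatedly invoking the addition formulas for $\cosh,\sinh,\cos,\sin$; the exponential factorization above sidesteps this bookkeeping completely, so I would present that argument and retain the brute-force expansion only as an internal consistency check.
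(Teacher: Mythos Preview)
Your proof is correct. You follow the same outline as the paper---compute $\varphi(t)$ via rule~(5), then verify the one-parameter subgroup property---and your explicit formula for $\varphi(t)$ matches the paper's exactly. The paper, however, simply asserts that $\varphi(t_1)\cdot\varphi(t_2)=\varphi(t_1+t_2)$ ``can be easily seen,'' leaving the verification to the reader (presumably by direct expansion and addition formulas). Your exponential factorization $\varphi(t)=e^{(a+b)t}e^{it}e^{jt}=e^{((a+b)+i+j)t}$, exploiting $i^2=-1$, $j^2=1$, and commutativity of $C_{\alpha\beta}$, is a genuinely cleaner justification than what the paper offers; it makes the homomorphism property transparent and also immediately explains the membership and norm computations you carried out separately. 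The cost is that a reader must accept the exponential series identities in this commutative algebra, but that is standard. Your additional checks that $\varphi(t)\in M_{t_{ij}}$ and $N_{\varphi(t)}\neq 0$ are not in the paper's proof but are appropriate to include.
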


\begin{proof}
We obtain
\begin{equation*}
\varphi \left( t\right) =\gamma \left( t\right) \otimes \delta \left(
t\right) =e^{(a+b)t}\left( \cosh t\cos t,\cosh t\sin t,\sinh t\cos t,\sinh
t\sin t\right)
\end{equation*}%
It can be easily seen that
\begin{equation*}
\varphi \left( t_{1}\right) \cdot \varphi \left( t_{2}\right) =\varphi
\left( t_{1}+t_{2}\right)
\end{equation*}%
for all $t_{1},t_{2}.$ Also $\varphi ^{-1}\left( t\right) =\varphi \left(
-t\right) .$ Hence $\left( \varphi \left( t\right) ,\cdot \right) $ is a one
parameter Lie subgroup of Lie group $\left( M_{t_{ij}},\cdot \right) .$
\end{proof}

\begin{corollary}
Let $\gamma :\mathbb{R\rightarrow R}_{1}^{2}$ be a Lorentzian circle
centered at O and $\delta :\mathbb{R\rightarrow R}^{2}$ be circle centered
at O with the same parameter, i.e., $\gamma \left( t\right) =\left( \cosh
t,\sinh t\right) $ and $\delta \left( t\right) =\left( \cos t,\sin t\right)
. $ Then their tensor product is a one parameter subgroup of Lie group $%
M_{t_{ij}}^{\ast }.$
\end{corollary}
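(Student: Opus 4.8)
The plan is to deduce this corollary from the preceding Proposition by specializing to $a=b=0$ and then verifying that the resulting one-parameter subgroup actually lies in the unit hyperquadric $M_{t_{ij}}^{\ast}$. Setting $a=b=0$ in the Proposition immediately above collapses the hyperbolic spiral $\gamma$ to the Lorentzian circle $\gamma(t)=(\cosh t,\sinh t)$ and the spiral $\delta$ to the circle $\delta(t)=(\cos t,\sin t)$, while the exponential prefactor $e^{(a+b)t}$ becomes $1$. Thus the tensor product reduces to
\[
\varphi(t)=\gamma(t)\otimes\delta(t)=\left(\cosh t\cos t,\ \cosh t\sin t,\ \sinh t\cos t,\ \sinh t\sin t\right),
\]
and that Proposition already guarantees $\varphi(t_{1})\cdot\varphi(t_{2})=\varphi(t_{1}+t_{2})$ together with $\varphi^{-1}(t)=\varphi(-t)$, so $\varphi$ is a one-parameter subgroup of the Lie group $M_{t_{ij}}$.

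The substantive step is to verify membership in $M_{t_{ij}}^{\ast}$, i.e.\ that $g_{t_{ij}}(\varphi(t),\varphi(t))=1$ for every $t$. Recalling that the norm on $M_{t_{ij}}$ is $N_{x}=x\cdot x^{t_{ij}}=x_{1}^{2}+\alpha x_{2}^{2}+\beta x_{3}^{2}+\alpha\beta x_{4}^{2}$, and inserting the Case~II values $\alpha=1,\ \beta=-1$, this norm becomes $x_{1}^{2}+x_{2}^{2}-x_{3}^{2}-x_{4}^{2}$. Substituting the four components of $\varphi(t)$ and grouping the trigonometric factors yields
\[
g_{t_{ij}}(\varphi(t),\varphi(t))=\cosh^{2}t\,(\cos^{2}t+\sin^{2}t)-\sinh^{2}t\,(\cos^{2}t+\sin^{2}t)=\cosh^{2}t-\sinh^{2}t=1,
\]
where I have used $\cos^{2}t+\sin^{2}t=1$ and $\cosh^{2}t-\sinh^{2}t=1$. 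Hence $\varphi(t)\subset M_{t_{ij}}^{\ast}$ for all $t$.

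Finally, since $M_{t_{ij}}^{\ast}$ is a subgroup of $M_{t_{ij}}$ under the generalized bicomplex product and the image of the homomorphism $\varphi$ is contained in $M_{t_{ij}}^{\ast}$, the map $\varphi$ is a one-parameter subgroup of $M_{t_{ij}}^{\ast}$, which is exactly the claim. I do not expect any genuine obstacle in this argument; the only point requiring care is the sign pattern of $g_{t_{ij}}$ for the Case~II parameters, since an incorrect sign would destroy the cancellation that produces the constant norm $1$. Once those signs are fixed as above, the Pythagorean identities close the computation.
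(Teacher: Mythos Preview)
Your proof is correct and follows essentially the same approach as the paper: specialize the preceding Proposition to $a=b=0$ and then verify that $g_{t_{ij}}(\varphi(t),\varphi(t))=1$ so that the image lies in $M_{t_{ij}}^{\ast}$. The paper leaves this particular corollary unproved, but the proof it gives for the analogous corollary in Case~I of Subsection~5.1 is exactly the argument you have written out here.
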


\begin{proposition}
Let $\varphi \left( t\right) $ be tensor product of a Lorentzian cirle
centered at O and a circle centered at O with the same parameter. Then the
left invariant vector field on $\varphi \left( t\right) $ is $X=X_{2}+X_{3},$
where $X_{2}$ and $X_{3}$ are left invariant vector fields on $%
M_{t_{ij}}^{\ast }.$
\end{proposition}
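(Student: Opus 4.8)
The plan is to reproduce the template already used for the analogous statements on $M_{t_{i}}$ and $M_{t_{j}}$, adapted to the present product. First I would write $\varphi \left( t\right) $ explicitly: substituting $\gamma \left( t\right) =\left( \cosh t,\sinh t\right) $ and $\delta \left( t\right) =\left( \cos t,\sin t\right) $ into the parametrization (5) with $\alpha =1$, $\beta =-1$ gives
\begin{equation*}
\varphi \left( t\right) =\left( \cosh t\cos t,\cosh t\sin t,\sinh t\cos t,\sinh t\sin t\right) ,
\end{equation*}
which is exactly the one parameter subgroup produced in the preceding proposition (with $a=b=0$). Evaluating at $t=0$ confirms $\varphi \left( 0\right) =\left( 1,0,0,0\right) =e$, the identity of $M_{t_{ij}}^{\ast }$.

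Next I would compute the tangent vector at the identity by differentiating each component and setting $t=0$. Using $\cosh 0=\cos 0=1$ and $\sinh 0=\sin 0=0$, the cross terms of the hyperbolic and trigonometric factors yield
\begin{equation*}
\varphi ^{\prime }\left( 0\right) =\left( 0,1,1,0\right) =X_{e},
\end{equation*}
so that in generalized bicomplex form $X_{e}=i+j$. The only point requiring attention here is verifying that the first and fourth components vanish at $t=0$ while the second and third equal $1$.

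Finally I would push $X_{e}$ forward by left translation. Since the left invariant field is obtained from $L_{g}^{\ast }\left( X_{e}\right) =g\cdot X_{e}$ and the generalized bicomplex product is bilinear, we have
\begin{eqnarray*}
L_{g}^{\ast }\left( X_{e}\right) &=&g\cdot X_{e}=\left( x_{1}1+x_{2}i+x_{3}j+x_{4}ij\right) \cdot \left( i+j\right) \\
&=&g\cdot i+g\cdot j=X_{2}+X_{3},
\end{eqnarray*}
where in the last step I invoke the Lie algebra description of $M_{t_{ij}}$, namely $X_{2}=g\cdot i$ and $X_{3}=g\cdot j$. This identifies the left invariant vector field on $\varphi \left( t\right) $ as $X=X_{2}+X_{3}$, as claimed.

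There is no genuine obstacle in this argument; it is a direct computation once the template is fixed. The only place to be careful is the passage from the tangent vector $\left( 0,1,1,0\right) $ to the bicomplex element $i+j$, since a sign or index slip there would produce the wrong pair of left invariant fields. Everything else follows from bilinearity of the product together with the already established expressions for $X_{2}$ and $X_{3}$.
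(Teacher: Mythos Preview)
Your proof is correct and follows exactly the template the paper uses for the analogous statements (e.g.\ the proof of the corresponding proposition in Case~I of subsection~5.1): write $\varphi(t)$ explicitly, evaluate $\varphi(0)=e$ and $\varphi'(0)=(0,1,1,0)=i+j$, then left-translate to obtain $g\cdot(i+j)=X_{2}+X_{3}$. The paper does not spell out a separate proof for this particular proposition, but your argument is precisely the intended one.
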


\begin{proposition}
Let $\gamma :\mathbb{R\rightarrow R}_{1}^{2}$, $\gamma \left( t\right)
=e^{at}\left( \cosh t,\sinh t\right) $ be a hyperbolic spiral and $\delta :%
\mathbb{R\rightarrow R}^{2}$ $\delta \left( s\right) =e^{bs}\left( \cos
s,\sin s\right) $ be a spiral $\left( a,b\in \mathbb{R}\right) .$ Then their
tensor product is 2-dimensional Lie subgroup of $M_{t_{i}}.$

\begin{proof}
By using tensor product rule given by (5), we get%
\begin{equation*}
f\left( t,s\right) =e^{at+bs}\left( \cosh t\cos s,\cosh t\sin s,\sinh t\cos
s,\sinh t\sin s\right)
\end{equation*}%
Every point of $f\left( t,s\right) $\ is on the hyperquadric $M_{t_{ij}}.$
Since $f\left( t,s\right) $ is both subgroup and submanifold of a Lie group $%
M_{t_{j}},$ it is a 2-dimensional Lie subgroup of $M_{t_{ij}}.$
\end{proof}
\end{proposition}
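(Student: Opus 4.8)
The plan is to follow the pattern of the preceding two-dimensional subgroup propositions, but to make the subgroup claim explicit by factoring the tensor product into two commuting one-parameter subgroups. First I would substitute $\gamma_{1}=e^{at}\cosh t$, $\gamma_{2}=e^{at}\sinh t$, $\delta_{1}=e^{bs}\cos s$, $\delta_{2}=e^{bs}\sin s$ into the tensor product rule (5), obtaining
\[
f(t,s)=e^{at+bs}\left( \cosh t\cos s,\ \cosh t\sin s,\ \sinh t\cos s,\ \sinh t\sin s\right).
\]
I would then verify that $f(t,s)\in M_{t_{ij}}$ for all $(t,s)$: the defining equation holds since $x_{1}x_{4}=x_{2}x_{3}=e^{2(at+bs)}\cosh t\sinh t\cos s\sin s$, and, with $\alpha=1$ and $\beta=-1$, the metric gives $g_{t_{ij}}(f,f)=x_{1}^{2}+x_{2}^{2}-x_{3}^{2}-x_{4}^{2}=e^{2(at+bs)}\left( \cosh^{2}t-\sinh^{2}t\right)=e^{2(at+bs)}\neq 0$, so $f$ stays off the null cone.

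The key step is to recognize the factorization $f(t,s)=P(t)\cdot Q(s)$ in the generalized bicomplex algebra, where $P(t)=e^{at}(\cosh t\,1+\sinh t\,j)$ and $Q(s)=e^{bs}(\cos s\,1+\sin s\,i)$; expanding the product and using $ji=ij$ recovers the four components above. In the present case $j^{2}=-\beta=1$ and $i^{2}=-\alpha=-1$, so the hyperbolic and circular angle-addition formulas give $P(t_{1})\cdot P(t_{2})=P(t_{1}+t_{2})$ and $Q(s_{1})\cdot Q(s_{2})=Q(s_{1}+s_{2})$; thus each of $\{P(t)\}$ and $\{Q(s)\}$ is a one-parameter subgroup. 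Since the generalized bicomplex product is commutative (its component formula is symmetric in $x$ and $y$, equivalently $ij=ji$), these two subgroups commute, whence
\[
f(t_{1},s_{1})\cdot f(t_{2},s_{2})=P(t_{1})Q(s_{1})P(t_{2})Q(s_{2})=P(t_{1}+t_{2})Q(s_{1}+s_{2})=f(t_{1}+t_{2},s_{1}+s_{2}),
\]
with identity $f(0,0)=1$ and inverse $f(t,s)^{-1}=f(-t,-s)$. This exhibits the image of $f$ as an abelian subgroup of $M_{t_{ij}}$.

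To promote ``subgroup'' to ``$2$-dimensional Lie subgroup'' I would confirm that $f$ is an immersion. At the identity $P'(0)$ and $Q'(0)$ correspond to the vectors $(a,0,1,0)$ and $(b,1,0,0)$, which are linearly independent; since $f$ is a homomorphism, $(df)_{(t,s)}=(dL_{f(t,s)})_{e}\circ(df)_{(0,0)}$, so $df$ has rank $2$ at every point and the image is a $2$-dimensional immersed submanifold. Being simultaneously a subgroup and a submanifold of the Lie group $M_{t_{ij}}$, it is a $2$-dimensional Lie subgroup, as asserted (the symbol $M_{t_{i}}$ appearing in the statement is a typographical slip for $M_{t_{ij}}$, consistent with the surrounding subsection).

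The step I expect to demand the most care is establishing the factorization $f(t,s)=P(t)\cdot Q(s)$ and checking that $P$ and $Q$ are commuting one-parameter subgroups; once these are in hand, closure, inverses, and the identity follow formally, and the immersion check is routine. Merely asserting that the image is both a subgroup and a submanifold would leave the group law unverified, so this factorization is the natural device for supplying a complete justification.
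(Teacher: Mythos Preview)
Your proof is correct and follows the same overall arc as the paper---compute $f(t,s)$ from rule~(5), place it on $M_{t_{ij}}$, and conclude it is a $2$-dimensional Lie subgroup---but you supply substantially more than the paper does. The paper's argument stops at the displayed formula for $f(t,s)$ and then simply \emph{asserts} that the image is ``both subgroup and submanifold,'' with no verification of closure under the generalized bicomplex product or of the rank of $df$. Your factorization $f(t,s)=P(t)\cdot Q(s)$ with $P(t)=e^{at}(\cosh t+\sinh t\, j)$ and $Q(s)=e^{bs}(\cos s+\sin s\, i)$, together with the observation that $j^{2}=1$ and $i^{2}=-1$ when $\alpha=1$, $\beta=-1$, is the honest mechanism behind the subgroup claim, and it is absent from the paper. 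Likewise, your immersion check via $(a,0,1,0)$ and $(b,1,0,0)$ at the identity, propagated by left translation, is the content hiding behind the word ``submanifold.'' In short, your route is not different in spirit but is a genuine completion of the paper's sketch; you have also correctly identified the typographical slips ($M_{t_i}$ for $M_{t_{ij}}$, and $M_{t_j}$ in the paper's own proof).
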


\begin{proposition}
Let $\gamma :\mathbb{R\rightarrow R}_{1}^{2}$, $\gamma \left( t\right)
=\left( \cosh t,\sinh t\right) $ be a Lorentzian circle and $\delta :\mathbb{%
R\rightarrow R}^{2}$ $\delta \left( s\right) =\left( \cos s,\sin s\right) $
be a circle $\left( a,b\in \mathbb{R}\right) .$ Then their tensor product is
2-dimensional Lie subgroup of $M_{t_{ij}}^{\ast }.$
\end{proposition}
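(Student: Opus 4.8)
The plan is to specialize the preceding Proposition to the case $a=b=0$ and then to verify only one extra fact, namely that the resulting surface lands on the unit hyperquadric $M_{t_{ij}}^{\ast}$. First I would substitute $a=b=0$ into the parametrization obtained there, which gives
\begin{equation*}
f\left( t,s\right) =\left( \cosh t\cos s,\cosh t\sin s,\sinh t\cos s,\sinh t\sin s\right) .
\end{equation*}
By that Proposition this $f\left( t,s\right) $ is already a $2$-dimensional Lie subgroup of $M_{t_{ij}}$, so the whole burden of the present statement reduces to upgrading membership from $M_{t_{ij}}$ to $M_{t_{ij}}^{\ast}$.

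The key step is a direct norm computation. Since here $\alpha =1$ and $\beta =-1$, the metric reads $g_{t_{ij}}=dx_{1}^{2}+dx_{2}^{2}-dx_{3}^{2}-dx_{4}^{2}$, so I would compute
\begin{equation*}
g_{t_{ij}}\left( f,f\right) =\cosh ^{2}t\left( \cos ^{2}s+\sin ^{2}s\right) -\sinh ^{2}t\left( \cos ^{2}s+\sin ^{2}s\right) =\cosh ^{2}t-\sinh ^{2}t=1,
\end{equation*}
where I have used $\cos ^{2}s+\sin ^{2}s=1$ together with $\cosh ^{2}t-\sinh ^{2}t=1$. This shows $f\left( t,s\right) \subset M_{t_{ij}}^{\ast}$. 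I would also note in passing that the defining relation of $M_{t_{ij}}$ is satisfied, since $f_{1}f_{4}-f_{2}f_{3}=\cosh t\sinh t\cos s\sin s-\cosh t\sinh t\sin s\cos s=0$, which is consistent with the claim that $f$ lies on the hyperquadric.

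To conclude, I would invoke the Corollary identifying $M_{t_{ij}}^{\ast}$ as a $2$-dimensional Lie subgroup of $M_{t_{ij}}$: since $f$ is a subgroup and a $2$-dimensional submanifold of $M_{t_{ij}}$ that is entirely contained in the subgroup $M_{t_{ij}}^{\ast}$, it is automatically a $2$-dimensional Lie subgroup of $M_{t_{ij}}^{\ast}$. I do not expect a serious obstacle here; the only points that require any care are confirming that the norm is \emph{identically} $1$ for all $\left( t,s\right) $, which the computation above settles, and confirming that the parametrization is a genuine immersion so that the image is truly $2$-dimensional, which follows exactly as in the preceding Proposition. Thus the proof is essentially the specialization $a=b=0$ plus the one-line norm identity.
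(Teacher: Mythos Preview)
Your proposal is correct and follows exactly the approach the paper uses for the analogous statements: specialize the preceding Proposition to $a=b=0$ and verify that the resulting surface lies in $M_{t_{ij}}^{\ast}$ via the norm identity $g_{t_{ij}}(f,f)=1$. The paper does not spell out a separate proof for this particular Proposition, but its proofs of the parallel statements (e.g.\ in Subsection~5.1, Case~I) are precisely ``taking $a=b=0$, we can see that the tensor product surface $f(t,s)\subset M^{\ast}$, hence it is a $2$-dimensional Lie subgroup''; your explicit norm computation simply fills in what the paper leaves implicit.
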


\begin{proposition}
Let $\gamma :\mathbb{R\rightarrow R}_{1}^{2}$, $\gamma \left( t\right)
=\left( \cosh t,\sinh t\right) $ be a Lorentzian circle at centered O and $%
\delta :\mathbb{R\rightarrow R}^{2}$ $\delta \left( s\right) =\left( \cos
s,\sin s\right) $ be a circle at centered O $\left( a,b\in \mathbb{R}\right)
.$ Then, the left invariant vector fields on tensor product surface $f\left(
t,s\right) =\gamma \left( t\right) \otimes \delta \left( s\right) $ are $%
X_{2}$ and $X_{3}$ which are the left invariant vector fields on $%
M_{t_{ij}}^{\ast }.$
\end{proposition}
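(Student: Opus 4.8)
The plan is to reuse the mechanism of the earlier left-invariant-vector-field propositions in this subsection: because the group law on $M_{t_{ij}}$ is the bilinear generalized bicomplex product, the differential $L_{x}^{\ast}$ of left translation by $x$ is nothing but left multiplication by $x$, so the left-invariant vector fields are produced by bicomplex-multiplying the identity tangent vectors.

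First I would write out the surface from the tensor rule (5) with $\gamma(t)=(\cosh t,\sinh t)$ and $\delta(s)=(\cos s,\sin s)$, obtaining
\[
f(t,s)=\left( \cosh t\cos s,\ \cosh t\sin s,\ \sinh t\cos s,\ \sinh t\sin s\right),
\]
and record that the unit of this $2$-dimensional subgroup is $e=f(0,0)=(1,0,0,0)$. I would then evaluate the two coordinate tangent vectors at $e$,
\[
u_{1}=\left.\tfrac{\partial f}{\partial t}\right|_{e}=(0,0,1,0),\qquad u_{2}=\left.\tfrac{\partial f}{\partial s}\right|_{e}=(0,1,0,0),
\]
and observe that, read as generalized bicomplex numbers, these are exactly the units $u_{1}=j$ and $u_{2}=i$.

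Next I would translate these tangent vectors to obtain the associated left-invariant fields. Using $L_{g}^{\ast}(v)=g\cdot v$ together with the product formula for $\cdot$ in the case $\alpha=1$, $\beta=-1$, I would compute $g\cdot j$ and $g\cdot i$ for $g=x_{1}+x_{2}i+x_{3}j+x_{4}ij$. Right-multiplying $g$ by $j$ gives $\left(-\beta x_{3},-\beta x_{4},x_{1},x_{2}\right)$ and by $i$ gives $\left(-\alpha x_{2},x_{1},-\alpha x_{4},x_{3}\right)$; comparing with the Lie-algebra basis of $M_{t_{ij}}$ established earlier, these are precisely $X_{3}$ and $X_{2}$, which span the Lie algebra of $M_{t_{ij}}^{\ast}$. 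Hence the left-invariant vector fields on $f(t,s)$ are $X_{2}$ and $X_{3}$, as asserted.

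I do not expect a genuine obstacle here: the statement is settled by direct computation. The only points demanding care are the bookkeeping of the $\alpha,\beta$ coefficients in the generalized bicomplex product when forming $g\cdot i$ and $g\cdot j$, and the correct reading of the identity tangent vectors $(0,0,1,0)$ and $(0,1,0,0)$ as the units $j$ and $i$; once these are pinned down, the identification with $X_{2}$ and $X_{3}$ follows immediately from their defining formulas.
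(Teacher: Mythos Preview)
Your proposal is correct and mirrors the paper's own template argument (given explicitly for the $M_{t_{i}}^{\ast}$ case): evaluate the coordinate tangent vectors at $e=f(0,0)$, recognize them as the generalized bicomplex units $j$ and $i$, and left-translate via $L_{g}^{\ast}(v)=g\cdot v$ to obtain $X_{3}$ and $X_{2}$. The computations of $f$, $u_{1}$, $u_{2}$, $g\cdot j$, and $g\cdot i$ are all accurate.
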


\subsubsection{Case III $\protect \alpha =-1,\protect \beta =-1$}

\begin{proposition}
Let $\gamma :\mathbb{R\rightarrow R}_{1}^{2}$ $\left( +\text{ }-\right) $
and $\delta :\mathbb{R\rightarrow R}_{1}^{2}$ $\left( +\text{ }-\right) $ be
two hyperbolic spirals with the same parameter, i.e. $\gamma \left( t\right)
=e^{at}\left( \cosh t,\sinh t\right) $ and $\delta \left( t\right)
=e^{bt}\left( \cosh t,\sinh t\right) $ $\left( a,b\in \mathbb{R}\right) .$
Then their tensor product is a one parameter subgroup of Lie group $%
M_{t_{ij}}.$
\end{proposition}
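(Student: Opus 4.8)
The plan is to follow the same template as the preceding one-parameter subgroup propositions, but to organize the computation so that the multiplicative property becomes transparent. First I would substitute $\gamma_1(t)=e^{at}\cosh t$, $\gamma_2(t)=e^{at}\sinh t$, $\delta_1(t)=e^{bt}\cosh t$, $\delta_2(t)=e^{bt}\sinh t$ into the tensor product rule (5) to obtain
\[
\varphi(t)=e^{(a+b)t}\left(\cosh^2 t,\ \cosh t\sinh t,\ \cosh t\sinh t,\ \sinh^2 t\right).
\]
A direct check gives $x_1x_4-x_2x_3=e^{2(a+b)t}\left(\cosh^2 t\sinh^2 t-\cosh^2 t\sinh^2 t\right)=0$, so $\varphi(t)$ satisfies the defining equation of $M_{t_{ij}}$, while its norm $g_{t_{ij}}(\varphi,\varphi)=e^{2(a+b)t}\left(\cosh^2 t-\sinh^2 t\right)^2=e^{2(a+b)t}\neq 0$; hence $\varphi(t)\in M_{t_{ij}}$.

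The key observation, which avoids a brute-force expansion of the generalized bicomplex product of two four-vectors, is that the tensor product rule (5) factors: as a generalized bicomplex number,
\[
\varphi(t)=\left(\gamma_1(t)+\gamma_2(t)j\right)\cdot\left(\delta_1(t)+\delta_2(t)i\right)=e^{(a+b)t}\left(\cosh t+\sinh t\,j\right)\cdot\left(\cosh t+\sinh t\,i\right),
\]
where I have used $ij=ji$. Since $\alpha=\beta=-1$ we have $i^2=-\alpha=1$ and $j^2=-\beta=1$, so the subspaces spanned by $\{1,i\}$ and by $\{1,j\}$ are each closed under the product and behave like split-complex numbers; in each of them the hyperbolic angle-addition identity $(\cosh t_1+\sinh t_1\,j)(\cosh t_2+\sinh t_2\,j)=\cosh(t_1+t_2)+\sinh(t_1+t_2)\,j$ holds, and likewise with $i$ in place of $j$.

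With this factorization the semigroup law is immediate. Because the generalized bicomplex product is commutative (each component of the product formula is symmetric under $x\leftrightarrow y$), I would rearrange
\[
\varphi(t_1)\cdot\varphi(t_2)=e^{(a+b)(t_1+t_2)}\left[(\cosh t_1+\sinh t_1\,j)(\cosh t_2+\sinh t_2\,j)\right]\left[(\cosh t_1+\sinh t_1\,i)(\cosh t_2+\sinh t_2\,i)\right]
\]
and apply angle-addition in each bracket to conclude $\varphi(t_1)\cdot\varphi(t_2)=\varphi(t_1+t_2)$. Setting $t=0$ gives $\varphi(0)=1$, the identity, and $\varphi(t)\cdot\varphi(-t)=\varphi(0)=1$ yields $\varphi^{-1}(t)=\varphi(-t)$. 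Hence $t\mapsto\varphi(t)$ is a homomorphism of $(\mathbb{R},+)$ into $(M_{t_{ij}},\cdot)$, i.e. a one-parameter Lie subgroup, as claimed. The main obstacle is purely bookkeeping: if one does not spot the factorization, one must expand $\varphi(t_1)\cdot\varphi(t_2)$ directly from the product formula with $\alpha=\beta=-1$ and collapse the resulting products of $\cosh$ and $\sinh$ via the hyperbolic addition formulas, which is exactly where sign errors are most likely; the factorization reduces the whole verification to the two scalar identities above.
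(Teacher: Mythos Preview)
Your proof is correct. The paper's own proof is essentially just the assertion ``it can be easily seen that $\varphi(t_1)\cdot\varphi(t_2)=\varphi(t_1+t_2)$'' after writing down $\varphi(t)$, together with $\varphi^{-1}(t)=\varphi(-t)$; no mechanism for the verification is displayed. Your route is genuinely different and more informative: you observe that the tensor product rule (5) factors in $C_{\alpha\beta}$ as $(\gamma_1+\gamma_2 j)(\delta_1+\delta_2 i)$, that for $\alpha=\beta=-1$ the spans of $\{1,i\}$ and $\{1,j\}$ are split-complex subalgebras, and then reduce the homomorphism property to two hyperbolic angle-addition identities plus commutativity of the generalized bicomplex product. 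This buys you a conceptual explanation that works uniformly for all the analogous one-parameter propositions in the paper (with $\cos/\sin$ replacing $\cosh/\sinh$ when the relevant unit squares to $-1$), and it also makes the membership check $\varphi(t)\in M_{t_{ij}}$ explicit, which the paper omits. The only cost is that you invoke commutativity and associativity of $C_{\alpha\beta}$, but both follow immediately from the symmetric product formula and from Theorem~1, so nothing is missing.
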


\begin{proof}
We obtain
\begin{equation*}
\varphi \left( t\right) =\gamma \left( t\right) \otimes \delta \left(
t\right) =e^{(a+b)t}\left( \cosh ^{2}t,\cosh t\sinh t,\cosh t,\sinh t,\sinh
^{2}t\right)
\end{equation*}%
It can be easily seen that
\begin{equation*}
\varphi \left( t_{1}\right) \cdot \varphi \left( t_{2}\right) =\varphi
\left( t_{1}+t_{2}\right)
\end{equation*}%
for all $t_{1},t_{2}.$ Also $\varphi ^{-1}\left( t\right) =\varphi \left(
-t\right) .$ Hence $\left( \varphi \left( t\right) ,\cdot \right) $ is a one
parameter Lie subgroup of Lie group $\left( M_{t_{ij}},\cdot \right) .$
\end{proof}

\begin{corollary}
Let $\gamma :\mathbb{R\rightarrow R}_{1}^{2}$ and $\delta :\mathbb{%
R\rightarrow R}_{1}^{2}$ be two Lorentzian circles centered at O with the
same parameter, i.e., $\gamma \left( t\right) =\left( \cosh t,\sinh t\right)
$ and $\delta \left( t\right) =\left( \cosh t,\sinh t\right) .$ Then their
tensor product is a one parameter subgroup of Lie group $M_{t_{ij}}^{\ast }.$
\end{corollary}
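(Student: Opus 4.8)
The plan is to obtain this corollary as the special case $a=b=0$ of the Proposition immediately preceding it, supplemented by a verification that the resulting curve lands on the unit hyperquadric $M_{t_{ij}}^{\ast}$. First I would set $a=b=0$, so that $\gamma(t)=(\cosh t,\sinh t)$ and $\delta(t)=(\cosh t,\sinh t)$ are Lorentzian circles centered at $O$, and the tensor product rule (5) yields
\[
\varphi(t)=\gamma(t)\otimes\delta(t)=\left(\cosh^{2}t,\ \cosh t\sinh t,\ \cosh t\sinh t,\ \sinh^{2}t\right).
\]
By that Proposition, $\varphi$ is already a one-parameter subgroup of the Lie group $M_{t_{ij}}$; in particular $\varphi(t_{1})\cdot\varphi(t_{2})=\varphi(t_{1}+t_{2})$ and $\varphi^{-1}(t)=\varphi(-t)$ require no additional argument, and membership in $M_{t_{ij}}$ (hence the defining relation $x_{1}x_{4}-x_{2}x_{3}=0$) is inherited from it.

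The only point left to check is that $\varphi(t)\in M_{t_{ij}}^{\ast}$ for every $t$, i.e. that the image lies in $\left\{x:g_{t_{ij}}(x,x)=1\right\}$. In Case III one has $\alpha=\beta=-1$, so the relevant metric specializes to $g_{t_{ij}}(x,x)=x_{1}^{2}-x_{2}^{2}-x_{3}^{2}+x_{4}^{2}$. I would substitute the four components of $\varphi(t)$ and simplify, obtaining
\[
g_{t_{ij}}(\varphi,\varphi)=\cosh^{4}t-2\cosh^{2}t\sinh^{2}t+\sinh^{4}t=\left(\cosh^{2}t-\sinh^{2}t\right)^{2}=1,
\]
where the last step uses $\cosh^{2}t-\sinh^{2}t=1$. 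Hence $\varphi(t)\subset M_{t_{ij}}^{\ast}$.

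Finally, since $M_{t_{ij}}^{\ast}$ is a Lie subgroup of $M_{t_{ij}}$ (the unit-norm elements, as recorded in the earlier corollary) carrying the restriction of the generalized bicomplex product $\cdot$, the curve $\varphi$---being a one-parameter subgroup of $M_{t_{ij}}$ whose image sits entirely inside $M_{t_{ij}}^{\ast}$---is automatically a one-parameter subgroup of $M_{t_{ij}}^{\ast}$, which is the assertion.

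There is essentially no obstacle here: the result is a direct specialization plus the single identity $\cosh^{2}t-\sinh^{2}t=1$. The one place to be careful is the bookkeeping of signatures, since the symbols $g_{t_{ij}}$ and $M_{t_{ij}}^{\ast}$ are reused across all four sign choices of $(\alpha,\beta)$; I would therefore insert $\alpha=\beta=-1$ before computing the norm, so that the cross terms enter with the correct signs and the perfect square emerges.
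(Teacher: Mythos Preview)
Your proposal is correct and follows essentially the same approach the paper uses for the analogous corollaries: set $a=b=0$ in the preceding proposition to inherit the one-parameter subgroup property, and verify $g_{t_{ij}}(\varphi,\varphi)=1$ so that the image lies in $M_{t_{ij}}^{\ast}$. The paper leaves this particular corollary unproved, but the one instance where it does write out a proof (Case~I of the $M_{t_{i}}$ subsection) proceeds exactly as you do, only with less detail in the norm computation.
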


\begin{proposition}
Let $\varphi \left( t\right) $ be tensor product of two Lorentzian circles
centered at O with the same parameter. Then the left invariant vector field
on $\varphi \left( t\right) $ is $X=X_{2}+X_{3},$ where $X_{2}$ and $X_{3}$
are left invariant vector fields on $M_{t_{ij}}^{\ast }.$
\end{proposition}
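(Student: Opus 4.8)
The plan is to follow the same three-step scheme used for the one-parameter subgroups in Case I of this subsection and in the earlier $M_{t_i}^{\ast}$ and $M_{t_j}^{\ast}$ analogues: write down the explicit curve $\varphi(t)$, read off its velocity at the identity, and then spread that velocity over the whole group by the differential of left translation, exploiting the fact that left translation here is nothing but generalized bicomplex multiplication. To begin, substituting $\gamma(t) = \delta(t) = (\cosh t, \sinh t)$ into the tensor product rule (5) with $\alpha = \beta = -1$ gives
\begin{equation*}
\varphi(t) = \left( \cosh^2 t,\ \cosh t \sinh t,\ \cosh t \sinh t,\ \sinh^2 t \right),
\end{equation*}
which the preceding Corollary already identifies as a one-parameter subgroup of $M_{t_{ij}}^{\ast}$.

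Next I would locate the identity and the tangent direction there. One checks $\varphi(0) = (1,0,0,0) = e$, so the curve issues from the unit element, and differentiating each component and evaluating at $t=0$ gives
\begin{equation*}
\varphi'(0) = (0,1,1,0) =: X_e.
\end{equation*}
Under the identification of a $4$-tuple with a generalized bicomplex number (the four slots corresponding to $1, i, j, ij$), this velocity is precisely $X_e = i + j$.

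Finally I would transport $X_e$ to an arbitrary point $g = x_1 1 + x_2 i + x_3 j + x_4 ij$ by the differential $L_g^{\ast}$ of left translation. Since left translation is generalized bicomplex multiplication, bilinearity of the product gives
\begin{align*}
L_g^{\ast}(X_e) &= g \cdot X_e = g \cdot (i + j) \\
&= g \cdot i + g \cdot j = X_2 + X_3,
\end{align*}
where the last equality is just the defining relations $(X_2)_g = g \cdot i$ and $(X_3)_g = g \cdot j$ for the left invariant vector fields recorded in the Lie algebra theorem for $M_{t_{ij}}$. Hence $X = X_2 + X_3$, as asserted.

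I do not expect a genuine obstacle here: the argument is a direct computation, and the only points requiring care are the sign bookkeeping from $\alpha = \beta = -1$ in the product formula and the correct basis reading of $\varphi'(0)$. Once $\varphi'(0) = i + j$ is confirmed, the conclusion follows immediately from the linearity of the generalized bicomplex product, exactly as in the parallel Propositions of the two earlier subsections.
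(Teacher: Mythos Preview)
Your proof is correct and follows essentially the same approach as the paper's template proof (given explicitly for the analogous Proposition in Case I of subsection~5.1): write out $\varphi(t)$, compute $\varphi(0)=e$ and $\varphi'(0)=(0,1,1,0)=i+j$, then left-translate via $L_g^{\ast}(X_e)=g\cdot(i+j)=X_2+X_3$. The paper does not spell out a proof for this particular proposition, but your argument is exactly the intended one.
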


\begin{proposition}
Let $\gamma :\mathbb{R\rightarrow R}_{1}^{2}$, $\gamma \left( t\right)
=e^{at}\left( \cosh t,\sinh t\right) $ and $\delta :\mathbb{R\rightarrow R}%
_{1}^{2}$ $\delta \left( s\right) =e^{bs}\left( \cosh s,\sinh s\right) $ be
two spirals $\left( a,b\in \mathbb{R}\right) .$ Then their tensor product is
2-dimensional Lie subgroup of $M_{t_{ij}}.$

\begin{proof}
By using tensor product rule given by (5), we get%
\begin{equation*}
f\left( t,s\right) =e^{at+bs}\left( \cosh t\cosh s,\cosh t\sinh s,\sinh
t\cosh s,,\sinh t\sinh s\right)
\end{equation*}%
Every point of $f\left( t,s\right) $\ is on the hyperquadric $M_{t_{ij}}.$
Since $f\left( t,s\right) $ is both subgroup and submanifold of a Lie group $%
M_{t_{ij}},$ it is a 2-dimensional Lie subgroup of $M_{t_{ij}}.$
\end{proof}
\end{proposition}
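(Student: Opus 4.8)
The plan is to establish three facts in sequence and then invoke the standard principle that a subset of the Lie group $M_{t_{ij}}$ which is at once a subgroup and a submanifold is a Lie subgroup: namely (a) the image of $f$ lies on the hyperquadric $M_{t_{ij}}$, (b) it is closed under the generalized bicomplex product and contains the identity and inverses, and (c) the parametrization is an immersion of rank $2$. Here $M_{t_{ij}}$ is already known to be a Lie group by the corresponding theorem, and the product is a commutative associative real algebra operation.

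First I would substitute $\gamma_1=e^{at}\cosh t$, $\gamma_2=e^{at}\sinh t$, $\delta_1=e^{bs}\cosh s$, $\delta_2=e^{bs}\sinh s$ into rule (5) to get
\[
f(t,s)=e^{at+bs}\left(\cosh t\cosh s,\ \cosh t\sinh s,\ \sinh t\cosh s,\ \sinh t\sinh s\right).
\]
The defining equation $x_1x_4-x_2x_3=0$ holds identically, since both products equal $e^{2(at+bs)}\cosh t\sinh t\cosh s\sinh s$. For nondegeneracy, $\alpha=\beta=-1$ gives $g_{t_{ij}}=dx_1^2-dx_2^2-dx_3^2+dx_4^2$, and factoring yields $g_{t_{ij}}(f,f)=e^{2(at+bs)}(\cosh^2 t-\sinh^2 t)(\cosh^2 s-\sinh^2 s)=e^{2(at+bs)}\neq 0$, so every point lies on $M_{t_{ij}}$.

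The subgroup property is where the real content sits, and I expect the clean route to be factoring the surface through two commuting one-parameter subgroups. Because $\alpha=\beta=-1$ forces $i^2=j^2=1$, I would set $g(t)=e^{at}(\cosh t+j\sinh t)$ and $h(s)=e^{bs}(\cosh s+i\sinh s)$. Expanding $g(t)\cdot h(s)$ with $ij=ji$ reproduces exactly the four components above, so $f(t,s)=g(t)\cdot h(s)$. The split-type identity $(\cosh t+j\sinh t)(\cosh t'+j\sinh t')=\cosh(t+t')+j\sinh(t+t')$ and its $i$-analogue (both using the squares equal to $1$) show that $g$ and $h$ are homomorphisms of $(\mathbb{R},+)$, and since the product is commutative,
\[
f(t_1,s_1)\cdot f(t_2,s_2)=g(t_1)g(t_2)\,h(s_1)h(s_2)=g(t_1+t_2)\,h(s_1+s_2)=f(t_1+t_2,s_1+s_2).
\]
Thus the image contains $f(0,0)=1$ and the inverse $f(t,s)^{-1}=f(-t,-s)$, so it is a subgroup.

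Finally I would check the immersion by differentiating at the identity: $\partial f/\partial t|_{(0,0)}=(a,0,1,0)$ and $\partial f/\partial s|_{(0,0)}=(b,1,0,0)$ are linearly independent, so $df$ has rank $2$ there and, since $f$ is a homomorphism, rank $2$ everywhere by left translation. As $f$ is also injective (forcing $f(t,s)=1$ gives $\sinh t=\sinh s=0$, hence $t=s=0$), the image is a genuine $2$-dimensional submanifold. Being simultaneously a subgroup and a submanifold of the Lie group $M_{t_{ij}}$, it is a $2$-dimensional Lie subgroup, as claimed. The one genuinely non-routine step is spotting the factorization $f=g\cdot h$ into commuting split one-parameter subgroups; once that is available, closure, inverses, and the dimension count all follow mechanically.
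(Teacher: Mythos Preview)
Your argument is correct and follows the same overall scheme as the paper: compute $f(t,s)$ from rule~(5), verify that it lies on $M_{t_{ij}}$, and then observe that it is simultaneously a subgroup and a submanifold. The paper's proof, however, simply asserts the subgroup and submanifold properties without any verification, whereas you supply all of the details.

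The genuinely new ingredient in your write-up is the factorization $f(t,s)=g(t)\cdot h(s)$ with $g(t)=e^{at}(\cosh t+j\sinh t)$ and $h(s)=e^{bs}(\cosh s+i\sinh s)$, exploiting that $i^{2}=j^{2}=1$ when $\alpha=\beta=-1$. This makes the identity $f(t_{1},s_{1})\cdot f(t_{2},s_{2})=f(t_{1}+t_{2},s_{1}+s_{2})$ transparent via split hyperbolic addition formulas, rather than requiring a direct four-component expansion of the generalized bicomplex product. You also explicitly check the rank of $df$ at the identity and the triviality of the kernel, neither of which the paper addresses. So your proof and the paper's share the same skeleton, but your version is considerably more complete; the factorization into commuting one-parameter subgroups is a cleaner conceptual explanation for why the image is a subgroup than anything the paper offers.
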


\begin{corollary}
Let $\gamma :\mathbb{R\rightarrow R}_{1}^{2}$, $\gamma \left( t\right)
=\left( \cosh t,\sinh t\right) $ and $\delta :\mathbb{R\rightarrow R}%
_{1}^{2} $ $\delta \left( s\right) =\left( \cosh s,\sinh s\right) $ be two
Lorentzian circles centered at O $\left( a,b\in \mathbb{R}\right) .$ Then
their tensor product is 2-dimensional Lie subgroup of $M_{t_{ij}}^{\ast }.$
\end{corollary}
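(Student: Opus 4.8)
The plan is to obtain this corollary as the $a=b=0$ specialization of the immediately preceding Proposition (the one whose curves are the two hyperbolic spirals $\gamma(t)=e^{at}(\cosh t,\sinh t)$ and $\delta(s)=e^{bs}(\cosh s,\sinh s)$), and then to verify the single extra condition that separates $M_{t_{ij}}^{\ast}$ from $M_{t_{ij}}$, namely the unit-norm condition $g_{t_{ij}}(x,x)=1$. First I would set $a=b=0$, so that the spirals degenerate to the Lorentzian circles $(\cosh t,\sinh t)$ and $(\cosh s,\sinh s)$, and by that Proposition their tensor product
\[
f(t,s)=\left( \cosh t\cosh s,\ \cosh t\sinh s,\ \sinh t\cosh s,\ \sinh t\sinh s\right)
\]
is already a $2$-dimensional Lie subgroup of $M_{t_{ij}}$; in particular it is at once a subgroup and a submanifold of $M_{t_{ij}}$.

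Next I would check that every point of $f(t,s)$ actually lies on the unit hyperquadric $M_{t_{ij}}^{\ast}$. Since we are in Case III with $\alpha=\beta=-1$, the relevant metric is $g_{t_{ij}}=dx_{1}^{2}-dx_{2}^{2}-dx_{3}^{2}+dx_{4}^{2}$, and a direct substitution gives
\[
g_{t_{ij}}(f,f)=\left(\cosh^{2}t-\sinh^{2}t\right)\left(\cosh^{2}s-\sinh^{2}s\right)=1,
\]
after grouping the four squared components as $\cosh^{2}t\,(\cosh^{2}s-\sinh^{2}s)-\sinh^{2}t\,(\cosh^{2}s-\sinh^{2}s)$. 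Hence $f(t,s)\subset M_{t_{ij}}^{\ast}$.

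Finally I would combine the two facts. Because $M_{t_{ij}}^{\ast}$ is itself a ($2$-dimensional) Lie subgroup of $M_{t_{ij}}$ by the earlier corollary, and $f(t,s)$ is a subgroup of $M_{t_{ij}}$ that is entirely contained in $M_{t_{ij}}^{\ast}$, the surface $f(t,s)$ is a subgroup of $M_{t_{ij}}^{\ast}$; being also a $2$-dimensional submanifold, it is a $2$-dimensional Lie subgroup of $M_{t_{ij}}^{\ast}$, as claimed.

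There is essentially no serious obstacle in this argument, since it is a specialization of an already established Proposition; the only step requiring genuine verification is the norm computation above, which confirms that setting $a=b=0$ forces the image into the unit quadric. I would also keep in mind that the closure of $M_{t_{ij}}^{\ast}$ under the generalized bicomplex product (needed for ``subgroup of $M_{t_{ij}}^{\ast}$'' to make sense) is supplied by the earlier corollary asserting that $M_{t_{ij}}^{\ast}$ is a Lie subgroup, so no independent multiplicativity argument is required here.
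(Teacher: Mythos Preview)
Your proposal is correct and follows essentially the same approach as the paper: specialize the preceding Proposition by setting $a=b=0$, then verify that the resulting surface satisfies the unit-norm condition $g_{t_{ij}}(f,f)=1$ so that it lies in $M_{t_{ij}}^{\ast}$. The paper leaves this particular corollary without an explicit proof, but the analogous corollaries earlier (e.g., in subsection~5.1.1) are proved exactly as you outline, and your explicit norm computation for the case $\alpha=\beta=-1$ is correct.
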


\begin{proposition}
Let $\gamma :\mathbb{R\rightarrow R}_{1}^{2}$, $\gamma \left( t\right)
=\left( \cosh t,\sinh t\right) $ and $\delta :\mathbb{R\rightarrow R}_{1}^{2}
$ $\delta \left( s\right) =\left( \cosh s,\sinh s\right) $ be two Lorentzian
circles centered at O $\left( a,b\in \mathbb{R}\right) .$ Then, the left
invariant vector fields on tensor product surface $f\left( t,s\right)
=\gamma \left( t\right) \otimes \delta \left( s\right) $ are $X_{2}$ and $%
X_{3}$ which are the left invariant vector fields on $M_{t_{ij}}^{\ast }.$
\end{proposition}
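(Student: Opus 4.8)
The plan is to mirror the argument already used for the left invariant vector fields on $M_{t_i}^{\ast}$: I would locate the identity as the image of the origin, read off the two coordinate tangent vectors there, identify them with generalized bicomplex units, and then transport them to an arbitrary point by left translation with respect to the product $\cdot$.

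First I would write the surface explicitly. Using the rule (5) with $\gamma(t)=(\cosh t,\sinh t)$ and $\delta(s)=(\cosh s,\sinh s)$,
\begin{equation*}
f(t,s)=\left( \cosh t\cosh s,\ \cosh t\sinh s,\ \sinh t\cosh s,\ \sinh t\sinh s\right),
\end{equation*}
so that the unit element of this $2$-dimensional Lie subgroup of $M_{t_{ij}}^{\ast}$ (which lives in the case $\alpha=\beta=-1$) is $e=f(0,0)=(1,0,0,0)$.

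Next I would differentiate and evaluate at the identity. This yields
\begin{equation*}
u_{1}=\left. \frac{\partial f}{\partial t}\right\vert _{e}=(0,0,1,0),\qquad u_{2}=\left. \frac{\partial f}{\partial s}\right\vert _{e}=(0,1,0,0),
\end{equation*}
which as generalized bicomplex numbers are exactly $j$ and $i$. The left invariant vector field extending a tangent vector $\xi$ at $e$ is obtained from left translation, $L_{g}^{\ast}(\xi)=g\cdot \xi$, so at a general point $g=x_{1}1+x_{2}i+x_{3}j+x_{4}ij$ I would compute
\begin{equation*}
L_{g}^{\ast}(u_{1})=g\cdot j=X_{3},\qquad L_{g}^{\ast}(u_{2})=g\cdot i=X_{2},
\end{equation*}
reading the two products off the definition of $\cdot$ and matching them against the formulas for $X_{2}$ and $X_{3}$ furnished by the theorem on the Lie algebra of $M_{t_{ij}}$. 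This identifies the two left invariant vector fields on $f(t,s)$ with $X_{2}$ and $X_{3}$, which are precisely the fields spanning the Lie algebra of $M_{t_{ij}}^{\ast}$.

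I expect no genuine obstacle here: associativity of $\cdot$ guarantees that $g\mapsto g\cdot \xi$ really is left invariant, and the earlier proposition for $M_{t_i}^{\ast}$ provides an exact template. The only point that demands care is the bookkeeping, namely matching $\partial f/\partial t$ and $\partial f/\partial s$ at the identity with the correct units (so that $u_1$ gives $j$ and $u_2$ gives $i$, not the reverse) and then reading the associated field off the product formula, since an index slip would interchange $X_{2}$ and $X_{3}$.
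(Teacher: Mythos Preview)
Your proposal is correct and follows exactly the template argument the paper uses (given explicitly only for the analogous statement on $M_{t_i}^{\ast}$): identify $e=f(0,0)$, compute the coordinate tangent vectors there as the generalized bicomplex units $j$ and $i$, and left-translate by $g$ to obtain $X_{3}$ and $X_{2}$. The paper does not supply a separate proof for this particular proposition, but your write-up is precisely the instantiation of its method in the $\alpha=\beta=-1$ case, with the bookkeeping (which you flag) done correctly.
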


\begin{remark}
The Case I in this subsection 5.3 coincides the paper studied by \"{O}%
zkald\i \ and Yayl\i \ [8]. So it can be considered that this subsection is a
generalization of this study.
\end{remark}

\end{document}